\newtheorem{theorem}{Theorem}[section]
\newtheorem{claim}[theorem]{Claim}
\newtheorem{lemma}[theorem]{Lemma}
\newtheorem{proposition}[theorem]{Proposition}
\newtheorem{corollary}[theorem]{Corollary}
\theoremstyle{definition}
\newtheorem{definition}[theorem]{Definition}
\theoremstyle{remark}
\newtheorem{remark}[theorem]{Remark}
\newtheorem{question}[theorem]{Question}
\DeclareMathOperator{\GCH}{\textsf{GCH}}
\DeclareMathOperator{\CH}{\textsf{CH}}
\DeclareMathOperator{\PFA}{\textsf{PFA}}
\DeclareMathOperator{\BPFA}{\textsf{BPFA}}
\DeclareMathOperator{\MM}{\textsf{MM}}
\DeclareMathOperator{\ZFC}{\textsf{ZFC}}
\DeclareMathOperator{\MRP}{\textsf{MRP}}
\DeclareMathOperator{\FA}{\textsf{FA}}
\DeclareMathOperator{\TC}{TC}
\DeclareMathOperator{\Club}{Club}
\DeclareMathOperator{\Id}{Id}
\DeclareMathOperator{\OCA}{\textsf{OCA}}
\DeclareMathOperator{\Res}{Res}
\DeclareMathOperator{\NS}{NS}
\DeclareMathOperator{\BA}{\textsf{BA}}
\DeclareMathOperator{\MA}{\textsf{MA}}
\DeclareMathOperator{\WCG}{\textsf{WCG}}
\DeclareMathOperator{\TOP}{\textsf{TOP}}
\DeclareMathOperator{\Fml}{Fml}
\DeclareMathOperator{\Add}{Add}
\newcommand{\mtcl}{\mathcal}
\newcommand{\sub}{\subseteq}
\newcommand{\Lim}{{\rm Lim}}
\newcommand{\dom}{{\rm dom}}
\newcommand{\cH}{{\mathscr H}}
\newcommand{\bbP}{{\mathbb P}}
\newcommand{\bbQ}{{\mathbb Q}}
\newcommand{\cf}{{\rm cf}}
\def\mathunderaccent#1#2 {\let\theaccent#1\skewfactor#2
\mathpalette\putaccentunder}
\def\putaccentunder#1#2{\oalign{$#1#2$\crcr\hidewidth
\vbox to.2ex{\hbox{$#1\skew\skewfactor\theaccent{}$}\vss}\hidewidth}}
\def\name{\mathunderaccent\tilde-3 }
\begin{document}

\title[$\PFA$ for $\aleph_1$-sized posets and the size of the continuum]{The proper forcing axiom for $\aleph_1$-sized posets, $\omega_1$-linked symmetrically proper forcing, and the size of the continuum}

\author[D.\ Asper\'o]{David Asper\'{o}}

\address{David Asper\'o, School of Mathematics, University of East Anglia, Norwich NR4 7TJ, UK}

\email{d.aspero@uea.ac.uk}

\author[M.\ Golshani]{Mohammad Golshani}
\address{School of Mathematics\\
 Institute for Research in Fundamental Sciences (IPM)\\
  P.O. Box:
19395-5746\\
 Tehran-Iran.}
\email{golshani.m@gmail.com}
\urladdr{http://math.ipm.ac.ir/~golshani/}

\thanks{The second author's research has been supported by a grant from IPM (No. 1401030417). }

\subjclass[2010]{Primary: 03E50, 03E35, 03E65}

\keywords {Proper Forcing Axiom, large continuum,  $\omega_1$-linked symmetrically proper forcing, Measuring, forcing with side conditions}

\begin{abstract}
We show that the Proper Forcing Axiom for forcing notions of size $\aleph_1$ is consistent with the continuum being arbitrarily large. In fact, assuming $\GCH$ holds and $\kappa\geq\omega_2$ is a regular cardinal, we prove that there is a proper and $\aleph_2$-c.c.\ forcing giving rise to a model of this forcing axiom together with $2^{\aleph_0}=\kappa$ and  which, in addition, satisfies all statements of the form $\cH(\aleph_2)\models \exists y\varphi(a, y)$, where $a\in \cH(\aleph_2)$ and $\varphi(x, y)$ is a $\Sigma_0$ formula with the property that for every ground model $M$ of $\CH$ with $a\in M$ there is, in $M$, a suitably nice poset---specifically, a poset $\bbQ\subseteq\cH(\kappa)^M$ which is $\omega_1$-linked and symmetrically proper---adding some $b$ such that $\varphi(a, b)$. In particular, $\bbP$ forces Moore's Measuring principle, Baumgartner's Axiom for $\aleph_1$-dense sets of reals, Todor\v{c}evi\'{c}'s Open Colouring Axiom for sets of size $\aleph_1$, the Abraham-Rubin-Shelah Open Colouring Axiom, and Todor\v{c}evi\'{c}'s  P-ideal Dichotomy for $\aleph_1$-generated ideals on $\omega_1$, among other statements. Hence, all these statements are simultaneously compatible with a large continuum. Finally, we show that a further small variation of our construction yields a model satisfying, in addition to all the earlier conclusions, Martin's Maximum for posets of size $\aleph_1$.
\end{abstract}

\maketitle
\setcounter{section}{-1}

\section{Introduction}
Forcing axioms can be considered as generalizations of the Baire category theorem and spell out one version of the idea that the universe of sets should be rich. The first example of a forcing axiom, introduced by Martin (see \cite{solovay}), is known as Martin's Axiom. In this paper we concentrate on a generalization of Martin's Axiom known as the Proper Forcing Axiom ($\PFA$).
$\PFA$ was introduced by Baumgartner \cite{baumgartner} (and Shelah \cite{Sh:b}) and states that given a proper forcing notion $\bbP$ and a collection $\mathcal D$
of $\aleph_1$-many dense subsets of $\bbP$ there exists a filter $G \subseteq \bbP$ meeting all the members of $\mathcal D$. $\PFA$ is consistent modulo
the existence of a supercompact cardinal and has many consequences for the structure of the universe; in particular, by the work of
Todor\v{c}evi\'{c} and Veli\v{c}kovi\'{c}  (see for example \cite{bekkali} and \cite{velickovic}), it implies that the size of the continuum is $\aleph_2$.

Throughout this paper let us denote by $\PFA(\omega_1)$ the restriction of $\PFA$ to posets of cardinality at most $\aleph_1$; i.e., $\PFA(\omega_1)$ is the statement that if $\bbP$ is a proper forcing notion such that $|\bbP|\leq\aleph_1$ and $\mathcal D$ is a collection of $\aleph_1$-many dense subsets of $\bbP$, then there is a filter $G\subseteq\bbP$ meeting all members of $\mathcal D$.

$\PFA(\omega_1)$ clearly implies Martin's Axiom for collections of $\aleph_1$-many dense sets (this is usually denoted by $\MA_{\aleph_1}$). It properly extends $\MA_{\aleph_1}$ as various non-c.c.c.\ proper forcings of size $\aleph_1$ fall in its range---for example Baumgartner's forcing for adding a club of $\omega_1$ with finite conditions, or natural forcings for adding, by finite conditions, various instances of the negation of Club Guessing at $\omega_1$. Thus, it is easy to see, for example, that $\PFA(\omega_1)$ implies $\lnot\WCG$, where $\WCG$ denotes weak Club Guessing.\footnote{Weak Club Guessing is the statement that there is a ladder system $\langle C_\delta\,:\,\delta\in\Lim(\omega_1)\rangle$ (i.e., each $C_\delta$ is a cofinal subset of $\delta$ of order type $\omega$) such that every club of $\omega_1$ has infinite intersection with some $C_\delta$. It is easy to see that $\MA_{\aleph_1}$ is compatible with $\WCG$ since $\WCG$ is preserved by c.c.c.\ forcing.} It is also worth mentioning that $\PFA(\omega_1)$ implies that every two normal Aronszajn trees $T$ and $U$ are club-isomorphic, i.e., there is a club $C\subseteq\omega_1$ such that the subtrees $T\restriction C=\bigcup_{\alpha\in C}\{t\in T\,:\, \text{ht}_T(t)=\alpha\}$ and $U\restriction C=\bigcup_{\alpha\in C}\{u\in U\,:\, \text{ht}_U(u)=\alpha\}$ are isomorphic (s.\ \cite{tod-handbook}, Theorem 5.10).

Shelah \cite{Sh:b} showed that the consistency of $\PFA(\omega_1)$ does not need any large cardinal hypotheses. In fact, starting with a model of $\GCH$, one can easily force $\PFA(\omega_1)$ by means of a suitable countable support iteration of proper forcings of size $\aleph_1$. In this model $2^{\aleph_0}=\aleph_2$ holds.  The question whether $\PFA(\omega_1)$ decides the value of the continuum remained an open problem.

There is a wide range of works showing the consistency of forcing axioms, or of consequences of forcing axioms, with the continuum being larger than $\aleph_2$;
 see for example \cite{aspero-mota1}, \cite{aspero-mota2}, \cite{cox}, \cite{gilton1}, \cite{gilton2},
  \cite{mohammadpour}, \cite{mohammadpour1} and \cite{sh1988}. For instance, and most to the point for us in the present paper, it is shown in \cite{aspero-mota1} using forcing with side conditions that  $\PFA$ restricted to certain classes of posets with the $\aleph_2$-chain condition is consistent with $2^{\aleph_0}>\aleph_2$.\footnote{Some form of side condition is usually necessary in the constructions we are referring to. At any rate, and as is well-known, any countable support iteration $\langle \bbP_\alpha\,:\,\alpha\leq\lambda\rangle$ of nontrivial forcing notions will collapse $(2^{\aleph_0})^{\bold V^{\bbP_\alpha}}$ to $\aleph_1$ for every stage $\alpha$ with $\alpha+\omega_1\leq\lambda$, which renders this method useless in the construction of models of forcing axioms with large continuum.}

 We remind the reader that a forcing $\bbP$ is said to be \emph{$\aleph_2$-Knaster} in case for every sequence $(p_i\,:\, i<\omega_2)$ of conditions in $\bbP$ there is $I\sub \omega_2$ of size $\aleph_2$ such that $p_{i_0}$ and $p_{i_1}$ are compatible in $\bbP$ for all $i_0$, $i_1\in I$. Obviously, every $\aleph_2$-Knaster forcing has the $\aleph_2$-chain condition.

In this paper we prove that $\PFA(\omega_1)$ is consistent with  the continuum being arbitrary large, thereby answering the above question. Given a class $\Gamma$ of forcing notions and a cardinal $\lambda$, the forcing axiom $\FA(\Gamma)_\lambda$ is the statement that for every $\bbP\in \Gamma$ and every collection $\{D_i\,:\, i<\lambda\}$ of dense subsets of $\bbP$ there is a filter $G\sub\bbP$ such that $G\cap D_i\neq\emptyset$ for all $i<\lambda$. In fact, we will prove the consistency of $2^{\aleph_0}$ being an arbitrarily fixed regular cardinal $\kappa$ together with $\PFA(\omega_1)_{{<}\kappa}$, where $\PFA(\omega_1)_{{<}\kappa}$ is $\FA(\{\bbP\,:\, \bbP\text{ proper}, |\bbP|=\aleph_1\})_\lambda$ for all $\lambda<\kappa$.

Our first main theorem is the following.

\begin{theorem}
\label{main}
Assume $\GCH$. Let $\kappa\geq\aleph_2$ be a regular cardinal.
Then there is an $\aleph_2$-Knaster proper partial order $\bbP$ forcing the following statements.
\begin{enumerate}
\item $2^{\aleph_0}=2^{\aleph_1}=\kappa$
\item $\PFA(\omega_1)_{{<}\kappa}$
\end{enumerate}
\end{theorem}

The forcing witnessing Theorem \ref{main} is a finite-support iteration with systems of models with markers as side conditions, in the style of the constructions in \cite{aspero-mota1}, \cite{aspero-mota2} or \cite{fnr}. However, we were also inspired by Shelah's memory iteration technique (s.\ for example  \cite{sh592}, \cite{sh619}, \cite{sh684}, \cite{sh1102} and \cite{gilton2}).

Familiarity with proper forcing should be enough to follow the paper. Some familiarity with the method of forcing with symmetric systems, as presented for example in \cite{aspero-mota1}, and with some of the arguments from \cite{fnr}, might also be useful. Our notation is standard (see for example \cite{jech}); in particular, given a forcing notion $\bbP$ and two forcing condition $p$, $q \in \bbP$, we use $q\leq_{\bbP} p$
to mean that $q$ is stronger than $p$. Also, given forcing notions $\bbP$ and $\bbQ$, we write $\bbP\lessdot \bbQ$ to denote that $\bbP$ is a complete suborder of $\bbQ$ (i.e., $\bbP$ is a suborder of $\bbQ$, any two incompatible conditions in $\bbP$ are incompatible in $\bbQ$, and any maximal antichain in $\bbP$ is in fact also a maximal antichain in $\bbQ$).

As it turns out, a small variant of our construction gives rise to a model satisfying a useful generic absoluteness statement, in addition to the statements in the conclusion of Theorem \ref{main} (this is pursued in Section \ref{more}). This form of generic absoluteness implies, among other principles,
 Todor\v{c}evi\'{c}'s Open Colouring Axiom for sets of size $\aleph_1$,  the Abraham-Rubin-Shelah Open Colouring Axiom, Todor\v{c}evi\'{c}'s  P-ideal Dichotomy for $\aleph_1$-generated ideals on $\omega_1$, Moore's Measuring principle, and Baumgartner's Thinning-Out Principle. Hence, all these principles are simultaneously compatible with $2^{\aleph_0}>\aleph_2$.

 Finally, we will show that a further, very mild, modification of our second construction gives rise to a model satisfying $\MM(\omega_1)$, i.e., Martin's Maximum restricted to forcings of size $\aleph_1$, in addition to all conclusions in the second theorem. This result shows the consistency of $\MM(\omega_1)$ together will $2^{\aleph_0}$ being arbitrarily large, thus answering a question from  \cite{DKMMZ} and \cite{foreman}.

Theorem \ref{main}---as well as the results from \cite{aspero-mota1}, \cite{aspero-mota2}, etc., and all known derivations of $2^{\aleph_0}=\aleph_2$ from forcing axioms---strongly suggest that the restriction of $\PFA$ to a class $\mathcal K$ of proper posets should decide the size of the continuum to be $\aleph_2$ only if $\mathcal K$ contains enough forcing notions collapsing cardinals to $\aleph_1$. This motivates the following general question.

\begin{question} Is the Proper Forcing Axiom restricted to the class of cardinal-preserving posets compatible with $2^{\aleph_0}>\aleph_2$?  Is even the Proper Forcing Axiom restricted to the class of posets with the $\aleph_2$-chain condition compatible with $2^{\aleph_0}>\aleph_2$?  \end{question}

\begin{remark}
In \cite{aspero-tananimit} it is proved that the forcing axiom $$\FA(\{\{\bbP\,:\, \bbP\text{ preserves stationary subsets of $\omega_1$ and has the $\aleph_2$-c.c.}\})_{\aleph_2}$$ is inconsistent.
\end{remark}

The rest of the paper is structured as follows. In Section \ref{mwithnch} we prove Theorem \ref{main}. We start Section \ref{more} by considering a slight variation of the construction from Theorem \ref{main} and show that it satisfies the form of generic absoluteness we have referred to above (in Subsection \ref{provingthm2}). This is Theorem \ref{thm2}. Then, in Subsection \ref{applications}, we prove that a number of classical consequences of $\PFA$ follow from our form of generic absoluteness. Finally, in Section \ref{section3} we prove that a small modification to our construction for Theorem \ref{thm2} gives rise to a forcing extension satisfying also $\MM(\omega_1)$.

\textbf{Acknowledgements}: We thank Tadatoshi Miyamoto, Miguel Angel Mota, and a referee for helpful suggestions and for pointing out errors in earlier versions of this paper.

\section{A model of $\PFA(\omega_1)_{{<}2^{\aleph_0}}$ and $2^{\aleph_0}$ arbitrarily large}
\label{mwithnch}

In this section we prove Theorem \ref{main}.

Assume $\GCH$ holds and let $\kappa\geq \aleph_2$ be a regular cardinal. For every cardinal $\theta$ let $\cH(\theta)$ denote the set of all sets that are hereditarily of cardinality less than $\theta$. Let $\phi: \kappa \to \cH(\kappa)$
be a function such that the set $\phi^{-1}(x) \subseteq \kappa$ is unbounded for every $x \in \cH(\kappa)$.\footnote{$\phi$ exists since $|\cH(\kappa)|=\kappa$ by $\GCH$.} The function $\phi$ will be our book-keeping function.
Let also $\triangleleft$ be a well-order of $\cH(\kappa^+)$ in order type $\kappa^+$ and let us note that $\triangleleft\in\cH(\kappa^{++})$. Given an ordinal $\alpha$, we will follow the standard practice of denoting the $\alpha$-th cardinal past $\kappa$ by $\kappa^{+\alpha}$; in other words, $\kappa^{+0}=\kappa$, $\kappa^{+(\beta+1)}=(\kappa^{+\beta})^+$, and $\kappa^{+\alpha}=\sup_{\beta<\alpha}\kappa^{+\beta}$ if $\alpha$ is a limit ordinal.

Given a set $N$, $\delta_N$ is defined as $N\cap\omega_1$. When $\delta_N\in\omega_1$, this ordinal is sometimes called \emph{the height of $N$}. If $N_0$ and $N_1$ are $\in$-isomorphic models of the Axiom of Extensionality, we refer to the unique isomorphism $\Psi:(N_0; \in)\rightarrow (N_1; \in)$ by $\Psi_{N_0, N_1}$.

The following notion is defined in \cite{aspero-mota1}.

\begin{definition}
Given a predicate $\Phi\sub\cH(\kappa)$, a finite set $\mtcl N\sub [\cH(\kappa)]^{\aleph_0}$ is a \emph{$\Phi$-symmetric system} if the following holds.

\begin{enumerate}
\item For every $N\in\mtcl N$, $(N; \in, \Phi\cap N)\prec (\cH(\kappa); \in, \Phi)$.
\item For all $N_0$, $N_1\in\mtcl N$, if $\delta_{N_0}=\delta_{N_1}$, then $(N_0; \in, \Phi\cap N_0)\cong (N_1; \in, \Phi\cap N_1)$. Moreover, $\Psi_{N_0, N_1}$ is the identity on $N_0\cap N_1$.
\item For all $N_0$, $N_1\in\mtcl N$ and all $M\in\mtcl N\cap N_0$, if $\delta_{N_0}=\delta_{N_1}$, then $\Psi_{N_0, N_1}(M)\in\mtcl N$.
\item For all $N_0$, $N_1\in\mtcl N$, if $\delta_{N_0}<\delta_{N_1}$, then there is some $N_1'\in\mtcl N$ such that $\delta_{N_1'}=\delta_{N_1}$ and $N_0\in N_1'$.
\end{enumerate}
\end{definition}

The following two amalgamation lemmaa are proved in \cite{aspero-mota1}.

\begin{lemma}\label{amalg}
Let $\Phi\sub\cH(\kappa)$, $\mtcl N$ a $\Phi$-symmetric system, $N\in\mtcl N$, and $\mtcl M\in N$ a $\Phi$-symmetric system such that $\mtcl N\cap N\sub \mtcl M$. Let $$\mtcl W=\mtcl N\cup\mtcl M\cup\{\Psi_{N, N'}(M)\,:\, N'\in\mtcl N,\,\delta_{N'}=\delta_N,\,M\in\mtcl M\}$$ Then $\mtcl W$ is a $\Phi$-symmetric system.
\end{lemma}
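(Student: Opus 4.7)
The plan is to verify conditions (1)--(4) of the definition of $\Phi$-symmetric system for $\mtcl W$. First, since $N\in\mtcl N$ and $\Psi_{N,N}$ is the identity, $\mtcl M$ is already a subset of $\{\Psi_{N, N'}(M)\,:\, N'\in\mtcl N,\,\delta_{N'}=\delta_N,\,M\in\mtcl M\}$, so I may decompose $\mtcl W=\mtcl N\cup\mtcl M^{*}$, where $\mtcl M^{*}:=\{\Psi_{N, N'}(M)\,:\, N'\in\mtcl N,\,\delta_{N'}=\delta_N,\,M\in\mtcl M\}$. The preliminary I want to record is that whenever $N'\in\mtcl N$ satisfies $\delta_{N'}=\delta_N$, the map $\Psi_{N,N'}$ restricts to the identity on the ordinal $\delta_N$: both $N$ and $N'$ contain $\omega_1$ by elementarity, so $\Psi_{N,N'}(\omega_1)=\omega_1$, and restricting yields an $\in$-isomorphism of $\delta_N=N\cap\omega_1$ with $\delta_{N'}=\delta_N$, which must be the identity. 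Since every $M\in\mtcl M$ satisfies $M\sub N$ (because $\mtcl M\in N$ is finite, hence $\mtcl M\sub N$, and $M$ is countable in $N$), it follows that $\delta_{\Psi_{N,N'}(M)}=\delta_M$, so heights in $\mtcl M^{*}$ match those in $\mtcl M$.

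For (1), elements of $\mtcl N$ are elementary by hypothesis, and for an element $M^{*}=\Psi_{N,N'}(M)$ of $\mtcl M^{*}$ I would argue that the statement ``$\mtcl M$ is a $\Phi$-symmetric system'' is a first-order statement about $\mtcl M$ true in $(\cH(\kappa);\in,\Phi)$; since $\mtcl M\in N$ and $(N;\in,\Phi\cap N)\prec(\cH(\kappa);\in,\Phi)$, the same statement is true in $N$. Applying the $\Phi$-preserving $\in$-isomorphism $\Psi_{N, N'}$ and then the elementarity of $N'$ transfers this truth back to $\cH(\kappa)$ about $\Psi_{N, N'}(\mtcl M)$, so each of its members, in particular $M^{*}$, is $\Phi$-elementary in $\cH(\kappa)$. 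For (2), given two models of equal height in $\mtcl W$: if both lie in $\mtcl N$ one invokes condition (2) for $\mtcl N$; if both lie in $\mtcl M^{*}$, say $N_i=\Psi_{N,N_i'}(M_i)$, then $\delta_{M_0}=\delta_{M_1}$ and the required map is the composition $\Psi_{N,N_1'}\circ\Psi_{M_0,M_1}\circ\Psi_{N,N_0'}^{-1}$, where $\Psi_{M_0,M_1}$ comes from condition (2) for $\mtcl M$; in the mixed case one uses the hypothesis $\mtcl N\cap N\sub\mtcl M$, combined with condition (4) for $\mtcl N$ applied to place the $\mtcl N$-side model inside some $N''\in\mtcl N$ of height $\delta_N$, to reduce to one of the previous cases.

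Conditions (3) and (4) carry the main technical content and proceed by a longer case analysis on the origin of each model in $\mtcl W$. For (3), the map $\Psi_{N_0,N_1}$ is assembled by chaining top-level isomorphisms of the form $\Psi_{N,N'}$ with the internal isomorphisms $\Psi_{M_0,M_1}$ coming from condition (2) for $\mtcl M$, and one then invokes condition (3) for $\mtcl M$ (or for $\mtcl N$) to conclude that the resulting conjugate is again in $\mtcl W$; for (4), the containing model is produced either directly by condition (4) for $\mtcl N$ or $\mtcl M$, or by conjugating such a model through an appropriate $\Psi_{N,N'}$. The main obstacle, and the place where the hypothesis $\mtcl N\cap N\sub\mtcl M$ is indispensable, is the mixed case in which a membership relation $M\in N_0$ crosses the partition $\mtcl W=\mtcl N\cup\mtcl M^{*}$: one must verify that the conjugated or enlarging model still belongs to $\mtcl W$, and the assumption $\mtcl N\cap N\sub\mtcl M$ ensures that every $L\in\mtcl N\cap N$ is already present in $\mtcl M$, so its image $\Psi_{N,N'}(L)$ is automatically in $\mtcl M^{*}\sub\mtcl W$.
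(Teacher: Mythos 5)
A preliminary remark: the paper does not prove this lemma itself but imports it from \cite{aspero-mota2}, so the comparison is with the standard argument there. Your outline reproduces the right overall mechanism (conjugate $\mtcl M$ through the maps $\Psi_{N,N'}$, use $\mtcl N\cap N\sub\mtcl M$ to cross between the two systems, and note that $\mtcl M\sub N$, that each $M\in\mtcl M$ is a countable subset of $N$, and that conjugation preserves heights). However, there are genuine gaps.

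First, your argument for clause (1) rests on the claim that ``$\mtcl M$ is a $\Phi$-symmetric system'' is a single first-order statement about $\mtcl M$ over $(\cH(\kappa);\in,\Phi)$. This is false precisely at the clause you need: ``$(M;\in,\Phi\cap M)\prec(\cH(\kappa);\in,\Phi)$'' is not expressible by one formula in $(\cH(\kappa);\in,\Phi)$, by undefinability of truth. The conclusion is still provable, but by a different argument: either transfer, for each fixed formula $\varphi$ separately, the statement ``for all $M\in\mtcl M$ and all $\bar a\in M$, $\varphi(\bar a)\leftrightarrow(M;\in,\Phi\cap M)\models\varphi[\bar a]$'' along the chain $\cH(\kappa)\to N\to N'\to\cH(\kappa)$, or run a direct Tarski--Vaught chase: given $a=\Psi_{N,N'}(b)\in M^*$ with $\cH(\kappa)\models\exists x\,\varphi(x,a)$, pull back through $N'$ and $N$ to get $\cH(\kappa)\models\exists x\,\varphi(x,b)$, pick a witness $c\in M$ by elementarity of $M$, and push $\Psi_{N,N'}(c)\in M^*$ forward. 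Second, the parts you yourself identify as carrying ``the main technical content'' are asserted rather than proved. In particular, the ``moreover'' clause of (2) is never addressed for the new pairs: you must check that $\Psi_{N,N_1'}\circ\Psi_{M_0,M_1}\circ\Psi_{N,N_0'}^{-1}$ is the identity on $M_0^*\cap M_1^*$. (It is: for $x\in M_0^*\cap M_1^*$ one has $\Psi_{N,N_0'}^{-1}(x)=\Psi_{N,N_1'}^{-1}(x)$ because $\Psi_{N_0',N_1'}=\Psi_{N,N_1'}\circ\Psi_{N,N_0'}^{-1}$ fixes $N_0'\cap N_1'$, so this common value lies in $M_0\cap M_1$ and is fixed by $\Psi_{M_0,M_1}$ by the ``moreover'' clause for $\mtcl M$ --- but this is exactly the kind of verification the lemma consists of.) Likewise (3) and (4) are dispatched in one sentence each with no case analysis; the intended moves are correct (e.g.\ for (4), given $P\in\mtcl N$ and $M^*=\Psi_{N,N'}(M)$ with $\delta_P<\delta_{M^*}$, find $N''\in\mtcl N$ of height $\delta_N$ with $P\in N''$, note $\Psi_{N'',N}(P)\in\mtcl N\cap N\sub\mtcl M$, enclose it in some $M'\in\mtcl M$ of height $\delta_M$ by (4) for $\mtcl M$, and return $\Psi_{N,N''}(M')\ni P$), but as written this is a plan for a proof rather than a proof.
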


\begin{lemma}\label{amalg2}
Let $\Phi\sub\cH(\kappa)$, let $\mtcl N_0$ and $\mtcl N_1$ be two $\Phi$-symmetric systems, and suppose there are, for some $n<\omega$, enumerations $(N^0_i)_{i<n}$ and $(N^1_i)_{i<n}$ of $\mtcl N_0$ and $\mtcl N_1$, respectively, such that the structures $$\langle \bigcup\mtcl N_0; \in,\Phi\cap (\bigcup\mtcl N_0), N^0_i\rangle_{i<n}$$ and $$\langle \bigcup\mtcl N_1; \in,\Phi\cap (\bigcup\mtcl N_1), N^1_i\rangle_{i<n}$$ are isomorphic, with the isomorphism between them being the identity on the intersection $(\bigcup\mtcl N_0)\cap (\bigcup\mtcl N_1)$. Then  $\mtcl N_0\cup\mtcl N_1$ is a $\Phi$-symmetric system.
\end{lemma}

For every $\rho<\kappa$, let $\mtcl E_\rho$ be the $\triangleleft$-first club of $[\cH(\kappa)]^{\aleph_0}$ with the property that for every $N\in \mtcl E_\rho$ there is some $N^*\prec \cH(\kappa^{+(1+\rho+1)})$ such that $\phi$, $\triangleleft\in N^*$ and $N^*\cap \cH(\kappa)=N$.

A \emph{model with marker} is a pair $(N, \rho)$, where
\begin{itemize}
\item $N\in [\cH(\kappa)]^{\aleph_0}$,
\item $\rho\in N\cap\kappa$, and
\item  $N\in\mtcl E_\rho$.
\end{itemize}

\begin{remark}
\begin{enumerate}
\item If $(N, \rho)$ is a model with marker and $\bar\rho\in N\cap\rho$, then $(N, \bar\rho)$ is also a model with marker. To see this, we note that if $N^*\prec\cH(\kappa^{+(1+\rho+1)})$ is such that  $\phi$, $\triangleleft\in N^*$ and $N=N^*\cap\cH(\kappa)$, then $\mtcl E_{\bar\rho}\in N^*$ since this club is definable over $\cH(\kappa^{+(1+\rho+1)})$ from the parameters $\phi$, $\triangleleft$ and $\cH(\kappa^{+(1+\bar\rho+1)})$, all of which are in $N^*$, and therefore $N\in \mtcl E_{\bar\rho}$.
\item Alternatively, we could have dispensed with $\triangleleft$ and the clubs $\mathcal E_\rho$; instead, we could have worked with just a sequence $\langle\Phi_\alpha\,:\, \alpha<\kappa\rangle$ of increasingly expressive predicates  contained in $\cH(\kappa)$ similar to the one we are about to define---as in, for example, \cite{fnr}. On the other hand, it is in any case convenient to have $\triangleleft$ available in Section \ref{more}.
\end{enumerate}
\end{remark}

Our forcing witnessing Theorem \ref{main} will be $\bbP_\kappa$ for a certain sequence $\langle \bbP_\alpha\,:\,\alpha\leq\kappa\rangle$ of forcing notions we will soon define by recursion on $\alpha$.
Together with $\langle \bbP_\alpha\,:\,\alpha\leq\kappa\rangle$, we will define a sequence $\langle\Phi_\alpha\,:\, \alpha<\kappa\rangle$ of predicates of $\cH(\kappa)$. To start with, $\Phi_0$ is the satisfaction predicate for $(\cH(\kappa); \in, \phi)$.
Given $\alpha<\kappa$, and assuming $\bbP_\alpha$ has been defined, we let $\Phi_{\alpha+1}$ be a predicate of $\cH(\kappa)$ encoding, in some fixed canonical way, the satisfaction predicate of $$(\cH(\kappa); \in, \{(\beta, x, N)\,:\, \beta\leq\alpha, x\in\Phi_\beta, N\in \mtcl E_\beta\}, \bbP_\alpha, \Vdash_\alpha^\ast).$$ Here, $\Vdash_\alpha^*$ is the forcing relation for $\bbP_\alpha$ for formulas involving names in $\cH(\kappa)$. Also, if $\alpha<\kappa$ is a limit ordinal and $\Phi_\beta$ has been defined for all $\beta<\alpha$, $$\Phi_\alpha=\{(\beta, x)\,:\, \beta<\alpha, x\in\Phi_\beta\}.$$

We define a \emph{symmetric system of models with makers} to be a collection $\Delta$ of models with markers such that:

\begin{itemize}
\item[(A)] $\dom(\Delta)=\{N\,:\, (N, \rho)\in \Delta\text{ for some }\rho\}$ is a $\Phi_0$-symmetric system;
\item[(B)] for every $\rho<\kappa$, $\{N\,:\,(N, \rho)\in\Delta\}$ is a $\Phi_\rho$-symmetric system.
\end{itemize}

Given $\alpha<\kappa$, $\bbP_\alpha$ will consist of pairs $p=(F_p, \Delta_p)$, where:
\begin{enumerate}
\item $F_p$ is a finite function with $\dom(F_p)\sub\alpha$ and such that for each $\beta\in \dom(F_p)$, $F_p(\beta)\in \omega_1$.
\item $\Delta_p$ is a symmetric system of models with markers $(N, \rho)$ such that $\rho\leq\alpha$.
\end{enumerate}

\begin{remark}
Modulo some notational changes, our construction will in fact be, essentially, a small variation of (a simple version of) the main forcing construction from \cite{aspero-mota1}.  One important change with respect to that construction is that we now drop the requirement that if $(N, \rho)\in\Delta_p$ and $\bar\rho\in N\cap \rho$, then also $(N, \bar\rho)\in \Delta_p$. Another essential change  is that, given a model with marker $(N, \beta+1)\in \Delta_p$, we require that the working part $F_p(\beta)$ be forced to be, not $(N[\name{G}_{\bbP_\beta}], \name{\bbQ}_\beta)$-generic, but $(N[\name{G}_{\bbP_\beta\restriction\,\mtcl U^\beta}], \name{\bbQ}_\beta)$-generic for a certain appropriate complete suborder $\bbP_\beta\restriction\,\mtcl U^\beta$ of $\bbP_\beta$. We will then of course have that $\name{\bbQ}_\beta$ is in fact a $\bbP_\beta\restriction\,\mtcl U^\beta$-name.
\end{remark}

The above specification defines the universe of $\bbP_0$.

For all $\alpha<\kappa$, $p\in\bbP_\alpha$, and $\beta<\alpha$, we define $p\restriction \beta = (F_p\restriction \beta, \Delta_p\restriction\beta)$, where $$\Delta_p\restriction\beta =\{(N, \rho)\in\Delta_p\,:\,\rho\leq\beta\}$$

Given $\alpha<\kappa$, the extension relation on $\bbP_\alpha$ will be denoted by $\leq_\alpha$. $p_1\leq_\alpha p_0$ will mean that $p_1$ is stronger than $p_0$ in $\bbP_\alpha$.

Given $p_0$, $p_1\in\bbP_0$, $p_1\leq_0 p_0$ iff $\Delta_{p_0}\sub \Delta_{p_1}$.

Given any $\alpha<\kappa$, we associate to $\alpha$ sets $\overline{\mtcl U}^\alpha$, $\mtcl U^\alpha\in [\alpha]^{{\leq}\aleph_1}$
 defined by letting
$\overline{\mtcl U}^\alpha=\mtcl U^\alpha=\emptyset$ if $\phi(\alpha)$ is not a sequence $\langle A^\alpha_i\,:\, i<\omega_1\rangle$ of antichains of $\bbP_\alpha$, each of size $\leq \aleph_1$, and, in the other case, if $\phi(\alpha)=\langle A^\alpha_i\,:\, i<\omega_1\rangle$, letting $\overline{\mtcl  U}^\alpha$ be the union of

\begin{itemize}
\item $\bigcup\{\dom(F_p)\,:\,p\in\bigcup_{i<\omega_1}A^\alpha_i\}$ and
\item $\bigcup\{N\cap\rho\,:\,p\in\bigcup_{i<\omega_1}A^\alpha_i,\,(N, \rho)\in\Delta_p\}$\footnote{There is room for flexibility in these definitions. For example we could have replaced this second bullet point with $\bigcup\{N\cap\alpha\,:\,p\in\bigcup_{ i<\omega_1}A^\alpha_i,\,(N, \rho)\in\Delta_p\}$, or $\{\rho\,:\, p\in\bigcup_{ i<\omega_1}A^\alpha_i,\,(N, \rho+1)\in\Delta_p\}$, and everything would have worked the same.}
\end{itemize}
\noindent and letting $\mtcl U^\alpha=\overline{\mtcl U}^\alpha\cup\bigcup\{\mtcl U^\xi\,:\,\xi\in\overline{\mtcl U}^\alpha\}$.

\begin{remark}\label{rmk00}
The following is easily proved by induction on $\alpha<\kappa$.
\begin{enumerate}
\item $|\mtcl U^\alpha|\leq\aleph_1$;
\item for each $\beta\in\mtcl U^\alpha$, $\mtcl U^\beta\sub\mtcl U^\alpha$.
\end{enumerate}
\end{remark}

Given any $\alpha<\kappa$, and assuming $\bbP_\alpha$ has been defined, we define $\bbP_\alpha\restriction\,\mtcl U^\alpha$ to be the suborder of $\bbP_\alpha$ consisting of those $p\in\bbP_\alpha$ such that

\begin{itemize}
\item $\dom(F_p)\sub\mtcl U^\alpha$ and
\item $\rho\in\mtcl U^\alpha$ for all $(N, \rho)\in \Delta_p$.
\end{itemize}

Given a $\bbP_\alpha$-condition $p$, $p\restriction \mtcl U^\alpha$ is defined as $$(F_p\restriction\mtcl U^\alpha, \{(N, \rho)\,:\, (N, \rho)\in\Delta_p,\,\rho\in\mtcl U^\alpha\}).$$
We note that $p\restriction\mtcl U^\alpha$ is a condition in $\bbP_\alpha\restriction\mtcl U^\alpha$.

Suppose $\name{\bbQ}$ is a $\bbP_\alpha\restriction\,\mtcl U^\alpha$-name for a partial order on $\omega_1^{\bold V}$. Given a $\bbP_\alpha\restriction\,\mtcl U^\alpha$-generic filter $G_0$ over $\bold V$, we say that \emph{$\name{\bbQ}_{G_0}$ is $(G_0, \bbP_\alpha)$-proper in $\bold V[G_0]$} in case there is a club $E$ of $[\cH(\kappa)]^{\aleph_0}$ in $\bold V$ with the property that for all $N\in E$, if there is some $q\in \bbP_\alpha$ such that $q\restriction\mtcl U^\alpha\in G_0$ and $(N, \rho)\in\Delta_q$
for all $\rho\in N\cap \mtcl U^\alpha$, then for every $\nu\in \omega_1^{\bold V}\cap N[G_0]$ there is some $(N[G_0], \name{\bbQ}_{G_0})$-generic condition $\nu^*\in\omega_1^{\bold V}$ such that $\nu^*\leq_{\name{\bbQ}_{G_0}} \nu$. And of course, we say that $\name{\bbQ}$ is \emph{forced to be $(\name{G}_{\bbP_\alpha\restriction\,\mtcl U^\alpha}, \bbP_\alpha)$-proper} if $\bbP_\alpha\restriction\,\mtcl U^\alpha$ forces over $\bold V$ that $\name\bbQ$ is $(\name{G}_{\bbP_\alpha\restriction\,\mtcl U^\alpha}, \bbP_\alpha)$-proper in $\bold V[\name{G}_{\bbP_\alpha\restriction\,\mtcl U^\alpha}]$.

We also define $\name{\bbQ}_\alpha$ to be the following $\bbP_\alpha\restriction\,\mtcl U^\alpha$-name:

\begin{itemize}
\item if $\phi(\alpha)$ is a sequence $\langle A^\alpha_i\,:\, i<\omega_1\rangle$ of antichains of $\bbP_\alpha$, each of size $\leq\aleph_1$,  such that $\bigcup_{i<\omega_1}\{i\}\times A^\alpha_i$, viewed as a nice $\bbP_\alpha\restriction\,\mtcl U^\alpha$-name for a subset of $\omega_1$, canonically encodes a forcing notion on $\omega_1^{\bold V}$ which $\bbP_\alpha\restriction\,\mtcl U^\alpha$ forces to be $(\name{G}_{\bbP_\alpha\restriction\,\mtcl U^\alpha}, \bbP_\alpha)$-proper, then $\name{\bbQ}_\alpha$ is a $\bbP_\alpha\restriction\,\mtcl U^\alpha$-name for this forcing notion;
\item in the other case, $\name{\bbQ}_\alpha$ is a $\bbP_\alpha\restriction\,\mtcl U^\alpha$-name for trivial forcing $\{\emptyset\}$.
\end{itemize}

We are now in a position to define $\bbP_\alpha$ for $\alpha>0$. A condition in $\bbP_\alpha$ is a pair $p=(F_p, \Delta_p)$ satisfying (1) and (2) above, together with the following.

\begin{enumerate}

\item[(3)] For each $\beta<\alpha$ and $(N, \beta+1)\in\Delta_p$, $\mtcl U_\beta\cap N\sub\{\rho\,:\,(N, \rho)\in\Delta_p\}$.

\item[(4)] For each $\beta\in\dom(F_p)$ and $(N, \beta+1)\in\Delta_p$, $p\restriction \mtcl U^\beta$ forces in $\bbP_\beta\restriction\,\mtcl U^\beta$ that $F_p(\beta)$ is $(N[\name{G}_{\bbP_\beta\restriction\,\mtcl U^\beta}], \name{\bbQ}_\beta)$-generic.

\end{enumerate}

Given $p_0$ and $p_1$, $\bbP_\alpha$-conditions, $p_1\leq_\alpha p_0$ iff

\begin{itemize}
\item for each $\beta<\alpha$, $p_1\restriction\beta\leq_\beta p_0\restriction\beta$;
\item $\{N\,:\,(N, \alpha)\in\Delta_{p_0}\}\sub\{N\,:\, (N, \alpha)\in\Delta_{p_1}\}$;
\item if $\alpha=\alpha_0+1$ and $\alpha_0\in\dom(F_{p_0})$, then
\begin{enumerate}
\item[(i)] $\alpha_0\in\dom(F_{p_1})$ and
\item[(ii)] $p_1\restriction \alpha_0\Vdash_{\bbP_{\alpha_0}}F_{p_1}(\alpha_0)\leq_{\name{\bbQ}_{\alpha_0}}F_{p_0}(\alpha_0)$.
\end{enumerate}\end{itemize}

Finally, we define $\bbP_\kappa=\bigcup_{\alpha<\kappa}\bbP_\alpha$.

It is immediate to see that $\langle \bbP_\alpha\,:\,\alpha\leq\kappa\rangle$ is a forcing iteration, in the sense that $\bbP_\alpha\lessdot\bbP_\beta$ holds for all $\alpha<\beta$. In fact, we have the following, which is easily verified 
%using Lemma \ref{amalg} and 
going one by one through all clauses in the definition of condition for the pair $(F_q\cup (F_p\restriction [\beta,\,\alpha)), \Delta_p\cup\Delta_q)$.

\begin{lemma}\label{compl0}
For all $\beta<\alpha<\kappa$, $p\in\bbP_\alpha$, and $q\in \bbP_\beta$, if $q\leq_\beta p\restriction \beta$, then $$(F_q\cup (F_p\restriction [\beta,\,\alpha)), \Delta_p\cup\Delta_q)$$ is a condition in $\bbP_\alpha$ extending both $p$ and $q$.
\end{lemma}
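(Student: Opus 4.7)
The plan is to unpack the hypothesis first, then check the four clauses of the $\bbP_\alpha$-definition for $r := (F_q \cup (F_p \restriction [\beta,\alpha)), \Delta_p \cup \Delta_q)$, and finally verify the two extension claims. The preliminary observation is the inclusion $\Delta_{p \restriction \beta} \sub \Delta_q$: unfolding the inductive definition of $\leq_\gamma$ downwards from $\beta$, one sees that at every level $\gamma \leq \beta$ we have $\{N : (N,\gamma) \in \Delta_p\} \sub \{N : (N,\gamma) \in \Delta_q\}$. This makes $F_r$ well defined with finite domain (the two pieces have disjoint domains in $\beta$ and $[\beta,\alpha)$) and ensures that $\Delta_r \restriction \gamma = \Delta_q \restriction \gamma$ for each $\gamma \leq \beta$.

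The substantive step is the verification that $\Delta_r$ is a symmetric system of models with markers, i.e.\ clause (2) of the definition of $\bbP_\alpha$. The marker-isomorphism requirement is automatic: any pair $(N_0,\rho), (N_1,\rho) \in \Delta_r$ with $\delta_{N_0} = \delta_{N_1}$ lies entirely in one of $\Delta_p$ or $\Delta_q$, because if $\rho > \beta$ then neither can come from $\Delta_q$ (whose markers are all $\leq \beta$), whereas if $\rho \leq \beta$ then the preliminary inclusion places any $\Delta_p$-member inside $\Delta_q$. For the requirement that $\dom(\Delta_r) = \dom(\Delta_p) \cup \dom(\Delta_q)$ be a $\Phi_0$-symmetric system I would check each of the four sym-sys clauses separately; the potentially delicate ones, namely the isomorphism property (2) and the $\Psi_{N_0,N_1}$-image closure (3) for height-matched pairs with $N_0 \in \dom(\Delta_p) \setminus \dom(\Delta_q)$ and $N_1 \in \dom(\Delta_q) \setminus \dom(\Delta_p)$, will be handled by chaining isomorphisms through a common model of $\dom(\Delta_{p \restriction \beta}) \sub \dom(\Delta_p) \cap \dom(\Delta_q)$ of the appropriate height, produced via clause (4) of each individual sym system. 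Clauses (3) and (4) of the $\bbP_\alpha$-definition then pass routinely: clause (3) is inherited directly from the respective condition in $\Delta_p$ or $\Delta_q$, and clause (4) is verified by splitting on whether $\beta' \in \dom(F_q)$ (in which case $\mtcl U^{\beta'} \sub \beta' < \beta$, so $r \restriction \mtcl U^{\beta'} = q \restriction \mtcl U^{\beta'}$ and genericity of $F_r(\beta') = F_q(\beta')$ transfers from $q$) or $\beta' \in \dom(F_p) \cap [\beta,\alpha)$ (in which case $r \restriction \mtcl U^{\beta'}$ extends $p \restriction \mtcl U^{\beta'}$ in $\bbP_{\beta'} \restriction \mtcl U^{\beta'}$, preserving the required genericity of $F_p(\beta')$).

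To conclude I would observe that by construction $r \restriction \beta = (F_q, \Delta_q) = q$, so $r$ extends $q$ trivially, and that $r \leq_\alpha p$ is proved by induction on $\gamma \leq \alpha$, reducing at each step to the marker-inclusion clause (which follows from $\Delta_p \sub \Delta_r$) and, at a successor top level $\alpha = \alpha_0 + 1$ with $\alpha_0 \in \dom(F_p)$, to the fact that $F_r(\alpha_0) = F_p(\alpha_0)$. The main obstacle I expect is the $\Phi_0$-symmetric-system verification for the union $\dom(\Delta_p) \cup \dom(\Delta_q)$: the two systems share only $\dom(\Delta_{p \restriction \beta})$, and one must make sure that every $\Psi$-image required by the sym-sys clauses, and every witness demanded by clause (4) of the sym-sys definition, lies inside the union rather than outside it.
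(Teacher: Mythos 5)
Your reductions are mostly sound: the inclusion $\Delta_{p\restriction\beta}\sub\Delta_q$ does follow from unfolding $\leq_\beta$ level by level, the marker--isomorphism clause for $\Delta_p\cup\Delta_q$ does localize to a single one of the two systems exactly as you say, clause (3) of the definition of a condition passes to the union, and your two cases for clause (4) are the right ones (though the case $\beta'\in\dom(F_p)\cap[\beta,\alpha)$ needs the whole lemma to be organized as an induction on $\alpha$, since showing $r\restriction\mtcl U^{\beta'}\leq p\restriction\mtcl U^{\beta'}$ already presupposes that $r\restriction\beta'$ is a condition extending $p\restriction\beta'$). The genuine gap is precisely at the point you flag as the main obstacle, and the repair you sketch there does not work. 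You propose to verify the symmetric-system clauses for a height-matched pair $N_0\in\dom(\Delta_p)\setminus\dom(\Delta_q)$, $N_1\in\dom(\Delta_q)\setminus\dom(\Delta_p)$ by chaining through a common model of the same height in $\dom(\Delta_{p\restriction\beta})$. But a model $N_0$ all of whose markers in $\Delta_p$ exceed $\beta$ --- which is exactly how $N_0$ can fail to be in $\dom(\Delta_q)$, and which the paper's Remark 2.6 explicitly permits by dropping the downward closure of markers --- contributes nothing to $\Delta_{p\restriction\beta}$, and there need be no model of height $\delta_{N_0}$ in $\dom(\Delta_{p\restriction\beta})$ at all; clause (4) of each individual system only produces witnesses inside that system, never in their intersection.

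Worse, clause (4) of the symmetric-system definition can fail outright for the union, so no isomorphism-chaining can close the gap. Take $\alpha$ a limit ordinal, $F_p=\emptyset$ and $\Delta_p=\{(N,\alpha)\}$ with $\alpha\in N$; since no pair $(N,\gamma+1)$ with $\gamma<\alpha$ occurs in $\Delta_p$, clauses (3) and (4) of the definition of a condition are vacuous and $p\in\bbP_\alpha$, while $p\restriction\beta$ is the trivial condition for every $\beta<\alpha$. Now let $\beta=1$ and $q=(\emptyset,\{(M,0)\})$ with $\delta_M<\delta_N$ and $M\notin N$; then $q\leq_1 p\restriction 1$, but $\dom(\Delta_p\cup\Delta_q)=\{N,M\}$ contains no model of height $\delta_N$ having $M$ as an element. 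So the union is not a $\Phi_0$-symmetric system and $r$ is not a condition. In other words, the step you were worried about is not merely delicate but false at the level of generality of the stated hypotheses: the amalgamation needs either an additional closure property of conditions guaranteeing $\dom(\Delta_p)\sub\dom(\Delta_{p\restriction\beta})$ (as in the constructions this one is modelled on), or the extra copies $\Psi_{N,N'}(M)$ supplied by Lemma \ref{amalg}, or else the lemma must be applied only to pairs $p,q$ arising in the proofs of Lemmas \ref{cc} and \ref{properness}, where the missing witnesses are added by hand. Your write-up should either supply one of these fixes or record the needed extra hypothesis explicitly.
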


We also have, for any given $\alpha<\kappa$, that $\bbP_\alpha\restriction\,\mtcl U^\alpha$ is a complete suborder of $\bbP_\alpha$. In fact, as the following  lemma shows (which easily follows from Remark \ref{rmk00}), the function sending $p\in\bbP_\alpha$ to $p\restriction\mtcl U^\alpha$ is a projection from $\bbP_\alpha$ onto $\bbP_\alpha\restriction\,\mtcl U^\alpha$.

\begin{lemma}\label{compl1}
Let $\alpha<\kappa$, $p\in\mtcl P_\alpha$, and let $q\in\bbP_\alpha\restriction\,\mtcl U^\alpha$ be a condition extending $p\restriction\,\mtcl U^\alpha$. Let $$r=((F_p\restriction \alpha\setminus\mtcl U^\alpha) \cup F_q, \Delta_p\cup\Delta_q).$$

Then $r$ is a condition in $\bbP_\alpha$ extending both $p$ and $q$.
\end{lemma}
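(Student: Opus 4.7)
The plan is to verify in sequence that (a) $r$ is a well-defined element of $\bbP_\alpha$, and (b) $r\leq_\alpha p$ and $r\leq_\alpha q$. Requirement (1) is immediate: $F_r$ is a finite partial function $\alpha\to\omega_1$ because $\dom(F_p\restriction(\alpha\setminus\mtcl U^\alpha))$ and $\dom(F_q)\sub\mtcl U^\alpha$ are disjoint subsets of $\alpha$. For requirement (2), that $\Delta_r=\Delta_p\cup\Delta_q$ is a symmetric system of models with markers, the hypothesis $q\leq_\alpha p\restriction\mtcl U^\alpha$ yields $\Delta_{p\restriction\,\mtcl U^\alpha}\sub\Delta_q$, so that $\dom(\Delta_p)$ and $\dom(\Delta_q)$ share the common substructure $\dom(\Delta_{p\restriction\,\mtcl U^\alpha})$, with each $N\in\dom(\Delta_p)\setminus\dom(\Delta_q)$ having all its markers outside $\mtcl U^\alpha$ and each $N\in\dom(\Delta_q)\setminus\dom(\Delta_p)$ having all its markers in $\mtcl U^\alpha$. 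From this the four defining clauses of $\Phi_0$-symmetric system for $\dom(\Delta_r)$, together with the marker-isomorphism clauses for each $\rho$, follow by a routine case analysis.

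Requirement (3) is direct: if $(N,\beta+1)\in\Delta_r$, then it lies in $\Delta_p$ or in $\Delta_q$, and requirement (3) for that condition yields $(N,\rho)\in\Delta_p\cup\Delta_q=\Delta_r$ for each $\rho\in\mtcl U^\beta\cap N$. The main obstacle is requirement (4), which for each $\beta\in\dom(F_r)$ and each $(N,\beta+1)\in\Delta_r$ demands that $r\restriction\,\mtcl U^\beta$ force $F_r(\beta)$ to be $(N[\name{G}_{\bbP_\beta\restriction\,\mtcl U^\beta}],\name{\bbQ}_\beta)$-generic. One splits into two main cases according to whether $\beta\in\mtcl U^\alpha$ (so $F_r(\beta)=F_q(\beta)$) or $\beta\notin\mtcl U^\alpha$ (so $\beta\in\dom(F_p)$ and $F_r(\beta)=F_p(\beta)$), and in each case appeals to requirement (4) for $q$ or $p$ respectively. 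The crucial technical point enabling this is that $r\restriction\,\mtcl U^\beta$ extends both $p\restriction\,\mtcl U^\beta$ and $q\restriction\,\mtcl U^\beta$ in $\bbP_\beta\restriction\,\mtcl U^\beta$, which is itself an instance of the present lemma at the earlier stage $\beta$ and is thus obtained by recursion on $\beta$. In the mixed sub-cases, where $(N,\beta+1)$ lies in only one of $\Delta_p,\Delta_q$ while $\beta$ sits on the opposite side of $\mtcl U^\alpha$, one transfers the genericity statement across the marker-isomorphism clauses of the symmetric system from a suitable counterpart of $N$ in the other system; this is the analogue of the corresponding step in the main amalgamation arguments of \cite{aspero-mota1}.

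Finally, $r\leq_\alpha p$ and $r\leq_\alpha q$ follow by induction on $\beta\leq\alpha$ through the clauses of the definition of $\leq_\beta$: the inclusions $\Delta_p,\Delta_q\sub\Delta_r$ supply the required marker-containments, while for successor stages $\beta=\gamma+1$ the working-condition clause follows from $F_r(\gamma)=F_p(\gamma)$ when $\gamma\notin\mtcl U^\alpha$, and from the fact that $r\restriction\gamma$ extends $q\restriction\gamma$ (and $q\restriction\gamma$ already forces $F_q(\gamma)\leq_{\name{\bbQ}_\gamma}F_p(\gamma)$) when $\gamma\in\mtcl U^\alpha$.
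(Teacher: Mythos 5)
The paper states this lemma without proof (it is introduced as ``the following easy lemma''), so there is no argument of the authors' to set yours against; judged on its own terms, your proposal has a genuine gap at what is in fact the delicate point. You dismiss as ``a routine case analysis'' the verification that $\dom(\Delta_p)\cup\dom(\Delta_q)$ is a $\Phi_0$-symmetric system, but for a bare union of two symmetric systems sharing a common part this can fail. Because the authors have dropped the closure requirement of \cite{aspero-mota1} (that $(N,\rho)\in\Delta_p$ and $\bar\rho\in N\cap\rho$ imply $(N,\bar\rho)\in\Delta_p$), the restriction $p\restriction\mtcl U^\alpha$ may lose models of $\dom(\Delta_p)$ entirely, namely those all of whose markers lie outside $\mtcl U^\alpha$. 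Take such an $N_0$, and let $q$ introduce a fresh model $N_1$ with $\delta_{N_1}>\delta_{N_0}$, of a height not occurring in $\dom(\Delta_p)$ and with $N_0\notin N_1$: clause (4) of the definition of symmetric system then demands some $N_1'$ in the union with $\delta_{N_1'}=\delta_{N_1}$ and $N_0\in N_1'$, and no candidate is available, since $q$ was chosen with no knowledge of $N_0$. Clauses (2) and (3) are threatened in the same way. What is actually needed here is an amalgamation in the style of Lemma \ref{amalg} --- closing under the isomorphisms $\Psi_{N,N'}$ and arranging that the new models contain the old ones --- rather than a union; your proof supplies neither the closure nor an argument that it is unnecessary in this situation.

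The ``mixed sub-case'' of requirement (4) is a second gap of the same kind. If $(N,\beta+1)\in\Delta_q\setminus\Delta_p$ while $\beta\in\dom(F_p)\setminus\mtcl U^\alpha$, then requirement (4) for $r$ demands that $F_p(\beta)$ be forced $(N[\name{G}_{\bbP_\beta\restriction\,\mtcl U^\beta}],\name{\bbQ}_\beta)$-generic --- a commitment $p$ never made and $q$ was never required to make, since $\beta\notin\dom(F_q)$ renders clause (4) for $q$ vacuous at $\beta$. Your plan to ``transfer the genericity statement across the marker-isomorphism clauses from a suitable counterpart of $N$'' presupposes that $\Delta_p$ contains a model of height $\delta_N$ with marker $\beta+1$; nothing in the hypotheses provides one. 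Both gaps point the same way: the specific $r$ named in the statement does not obviously work as written, and a complete argument must either first extend $p$ so that its side conditions absorb $\Delta_q$ (using Lemma \ref{amalg} together with clause $(1)_\beta$ of Lemma \ref{properness}) or else confine the lemma to the configurations in which it is actually invoked later in the paper.
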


\begin{proof}
We prove by induction on $\beta\leq\alpha$ that $r\restriction\beta$ is a condition in $\bbP_\beta$. This is enough, as it is then immediate that $r$ extends both $p$ and $q$. Clauses (1) and (2) in the definition of $\bbP_\beta$ for $r\restriction\beta$ are trivial. Clause (3) follows from the fact that $\mathcal U^{\gamma_1}\sub\mathcal U^{\gamma_0}$ for all $\gamma_0<\beta$ and all $\gamma_1\in\mtcl U^{\gamma_0}$
and clause (4) is true by the induction hypothesis and the fact that $(r\restriction\beta)\restriction\mathcal U^\gamma=r\restriction\mathcal U^\gamma$ for every $\gamma<\beta$.
\end{proof}

In what follows we will use Lemmas \ref{compl0} and \ref{compl1} repeatedly, often without mention.

We will next show, by a standard $\Delta$-system argument using $\CH$, that each $\bbP_\alpha$ is $\aleph_2$-Knaster.

\begin{lemma}\label{cc} For each $\alpha\leq\kappa$, $\bbP_\alpha$ is $\aleph_2$-Knaster.
\end{lemma}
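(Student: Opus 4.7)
The plan is to argue by induction on $\alpha\leq\kappa$ via a $\Delta$-system plus counting argument under $\GCH$, with the amalgamation Lemma \ref{amalg} providing the combinatorial engine. Since $\bbP_\kappa=\bigcup_{\alpha<\kappa}\bbP_\alpha$ is a directed union by Lemma \ref{compl0}, it suffices to treat each $\alpha<\kappa$ separately.

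Given a sequence $(p_i:i<\omega_2)$ in $\bbP_\alpha$, I would associate to each $p_i$ a finite support $s_i\subseteq\cH(\kappa)$ containing $\dom(F_{p_i})$, every marker $\rho$ occurring in $\Delta_{p_i}$, every element of every $N$ with $(N,\rho)\in\Delta_{p_i}$, every height $\delta_N$ for such $N$, and, as a precaution against the subtle case discussed below, the successor $\beta+1$ of every $\beta\in\dom(F_{p_i})$ and the ordinal $\beta$ whenever $\beta+1$ is a successor marker in $\Delta_{p_i}$. Applying the $\Delta$-system lemma yields $J_0\subseteq\omega_2$ of size $\aleph_2$ on which the $s_i$ form a $\Delta$-system with root $s^\ast$. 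I would then refine via $\GCH$ (so $2^{\aleph_0}=\aleph_1$) to $J\subseteq J_0$ of size $\aleph_2$ on which $F_{p_i}\restriction s^\ast$ is constant, the root portion of $\Delta_{p_i}$ is constant, and the isomorphism type of the full structure $(s_i;\in,\phi,\{\Phi_\rho\cap s_i:\rho\in s^\ast\})$ over $s^\ast$ is fixed. Under $\GCH$ there are at most $\aleph_1$ such types, so the refinement preserves size $\aleph_2$.

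For distinct $i_0,i_1\in J$, propose the common extension
\[
q=\bigl(F_{p_{i_0}}\cup F_{p_{i_1}},\ \Delta_{p_{i_0}}\cup\Delta_{p_{i_1}}\bigr),
\]
whose $F$-part is well defined by agreement on $s^\ast$. That $\dom(\Delta_q)$ is a $\Phi_0$-symmetric system follows by iterating Lemma \ref{amalg}, with the $\Delta$-system structure providing its hypothesis at each step. The marker-compatibility clause for $\Delta_q$ reduces to the fixed type, since any cross pair $(N_0,N_1)$ with $\delta_{N_0}=\delta_{N_1}$ sharing a common marker $\rho$ must have $\rho\in s^\ast$. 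Clauses (1)--(3) of the definition of $\bbP_\alpha$ for $q$ then follow routinely from the corresponding clauses for $p_{i_0}$ and $p_{i_1}$.

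The main obstacle is clause (4) for $q$: for each $(N,\beta+1)\in\Delta_q$ with $\beta\in\dom(F_q)$, we must have $q\restriction\mtcl U^\beta$ forcing $F_q(\beta)$ to be $(N[\dot G_{\bbP_\beta\restriction\,\mtcl U^\beta}],\name{\bbQ}_\beta)$-generic. When $(N,\beta+1)$ and $\beta$ both come from the same $p_{i_j}$, this is inherited from $p_{i_j}$ and preserved under strengthening. The delicate case is when $(N,\beta+1)\in\Delta_{p_{i_1}}\setminus\Delta_{p_{i_0}}$ but $F_q(\beta)=F_{p_{i_0}}(\beta)$; thanks to the successor-closure of the supports, this forces $\beta\in s^\ast$, and by the fixed isomorphism type there is a twin $(N',\beta+1)\in\Delta_{p_{i_0}}$ of $(N,\beta+1)$ with $\delta_{N'}=\delta_N$. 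The genericity of $F_{p_{i_0}}(\beta)$ for $N'$ is inherited from $p_{i_0}$, and the isomorphism $\Psi_{N,N'}$, which preserves $\Phi_{\beta+1}$ and hence the forcing relation $\Vdash^\ast_\beta$ encoded therein, transports genericity from $N'$ to $N$. The inductive hypothesis is invoked at lower stages $\beta<\alpha$ to control the complete subforcings $\bbP_\beta\restriction\,\mtcl U^\beta$; verifying this transfer cleanly, while tracking how $q\restriction\mtcl U^\beta$ interacts with $\bbP_\beta\restriction\,\mtcl U^\beta$, is the real technical heart of the argument.
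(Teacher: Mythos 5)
Your argument is essentially the paper's proof: the paper likewise forms a $\Delta$-system (under $\CH$) on countable sets attached to the $p_i$ --- there, countable elementary submodels $M_i\prec\cH(\kappa^+)$ containing $p_i$ --- refines so that the structures coding $p_i$ together with the predicates $\Phi_\beta$ are pairwise isomorphic via maps fixing the root, and then takes exactly your candidate $(F_{p_{i_0}}\cup F_{p_{i_1}},\Delta_{p_{i_0}}\cup\Delta_{p_{i_1}})$ as the common extension. The only slip is calling the supports $s_i$ \emph{finite}: since they must contain every element of every (countably infinite) $N\in\dom(\Delta_{p_i})$ for the amalgamation and the clause-(4) transfer to work, they are countable, so it is the $\Delta$-system lemma for countable sets under $\CH$ (not the finite one) that is really in play --- which is precisely what the paper uses.
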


\begin{proof}
Let $p_i\in\bbP_\alpha$ for each $i<\omega_2$. For each $i$ let $M_i$ be a countable elementary submodel of $\cH(\kappa^+)$ containing $\phi$ and $p_i$, and let $\mtcl M_i$ be a structure with universe $M_i$ coding $p_i$ and $\{(\beta, x)\,:\, \beta\in M_i\cap(\alpha+1), x\in \Phi_\beta\cap M_i\}$ in some canonical way. By $\CH$, we may find $I\in [\omega_2]^{\aleph_2}$ and $R\in [\cH(\kappa)]^{\aleph_0}$  such that $\{M_i\,:\, i\in I\}$ forms a $\Delta$-system with root $R$. By further shrinking $I$ if necessary, we may assume that for all $i_0$, $i_1\in I$, $\mtcl M_{i_0}\cong \mtcl M_{i_1}$ and $\Psi_{M_{i_0}, M_{i_1}}$ is the identity on $M_{i_0}\cap M_{i_1}$.

It is then straightforward to see, using Lemma \ref{amalg2}, that if $i_0$, $i_1\in I$, then the ordered pair $(F_{p_{i_0}}\cup F_{p_{i_1}}, \Delta_{p_{i_0}}\cup\Delta_{p_{i_1}})$ is a condition in $\bbP_\alpha$ extending both $p_{i_0}$ and $p_{i_1}$.
\end{proof}

The following symmetry lemma, whose proof is straightforward, will be quite useful.

\begin{lemma}\label{symm-lem}
Let $\alpha<\kappa$, $p\in\bbP_\alpha$, and $M^0$ and $M^1$ in $\dom(\Delta_p)$ such that $\alpha\in M^0\cap M^1$ and $\delta_{M^0}=\delta_{M^1}$. Suppose $(M^0, \rho)$ and $(M^1, \rho)$ are in $\Delta_p$ for every $\rho\in\mtcl U^\alpha\cap M^0$. Suppose $s\in M^0\cap (\bbP_\alpha\restriction\,\mtcl U^\alpha)$ is such that $p\restriction\mtcl U^\alpha\leq_{\bbP_\alpha\restriction\,\mtcl U^\alpha}s$. Then $p\restriction\mtcl U^\alpha\leq_{\bbP_\alpha\restriction\,\mtcl U^\alpha}\Psi_{M^0, M^1}(s)$.
\end{lemma}

\begin{proof} The point is that for every $(M, \rho)\in\Delta_s$, $\rho\in\mtcl U^\alpha\cap M^0\subseteq M^1$ (since $\mtcl U^\alpha\in M^0\cap M^1$ has size at most $\aleph_1$ and $\delta_{M^0}=\delta_{M^1}$) and therefore $(\Psi_{M^0, M^1}(M), \rho)\in\Delta_p$ by the closure condition in the definition of symmetric system of models with markers.
\end{proof}

We also have the following lemma.

\begin{lemma}\label{symm-small-proper}
Suppose $\bbQ$ is a proper forcing on $\omega_1$ and $\mtcl N$ is a finite symmetric system of countable elementary submodels of $\cH(\theta)$, for some cardinal $\theta\geq\omega_2$, containing $\bbQ$. If $\nu\in\delta_{N_0}$ for some $N_0\in\mtcl N$ of minimal height within $\mtcl N$, then there is an extension of $\nu$ in $\bbQ$ which is $(N, \bbQ)$-generic for every $N\in\mtcl N$.
\end{lemma}

\begin{proof}
Let $(N_k)_{k\leq n}$ be, for some $n<\omega$, an $\in$-chain of members of $\mtcl N$ such that every member of $\mtcl N$ is isomorphic to $N_k$ for some $k$. Let also $N_{n+1}=\cH(\theta)$. Noting then that $\nu\in N_0$, we build a decreasing sequence $(\nu_k)_{k\leq n+1}$ of conditions in $\bbQ$ by setting $\nu_0=\nu$ and letting $\nu_{k+1}$ be, for each $k\leq n$, some $(N_k, \bbQ)$-generic condition in $N_{k+1}$ extending $\nu_k$.

We claim that $\nu^*=\nu_{n+1}$ satisfies the conclusion. For this, let $N\in\mtcl N$, $D\in N$ a dense subset of $\bbQ$, $\nu'$ an extension of $\nu^*$ in $\bbQ$, and let us find some condition in $\bbQ$ extending both $\nu'$ and some condition in $D\cap\delta_N$. Let $k\leq n$ be such that $N\cong N_k$ and let $E=\Psi_{N, N_k}(D)$. Then $E$ is a dense subset of $\bbQ$ and hence there is a common extension $\nu''$ of $\nu'$ and some $\bar\nu\in E\cap \delta_{N_k}=E\cap\delta_N$. But of course $\bar\nu=\Psi_{N_k, N}(\bar\nu)\in D$, which finishes the proof.
\end{proof}

It will be crucial to have the following lemma at hand.

\begin{lemma}\label{same-subsets-o1-ext}
Let $\alpha<\kappa$. Let $t\in\bbP_\alpha$ and $(M^0, \alpha)$, $(M^1, \alpha)\in\Delta_t$ with the following properties.
\begin{enumerate}
\item $(M^0, \rho_0)\in\Delta_t$ and $(M^1, \rho_1)\in \Delta_t$ for all $\rho_0\in M^0\cap \mtcl U^\alpha$ and $\rho_1\in M^1\cap \mtcl U^\alpha$.
\item $(M^0; \in, \Phi_{\alpha+1}\cap M^0)$ and $(M^1; \in, \Phi_{\alpha+1}\cap M^1)$ are both elementary submodels of $(\cH(\kappa); \in, \Phi_{\alpha+1})$.
\item $(M^0; \in, \Phi_{\alpha+1}\cap M^0)\cong (M^1; \in, \Phi_{\alpha+1}\cap M^1)$.\footnote{In particular,  $\delta_{M^0}=\delta_{M^1}$.}
\item  $t\restriction\mtcl U^{\alpha}$ is both $(M^0, \bbP_\alpha\restriction\,\mtcl U^\alpha)$-generic and  $(M^1, \bbP_\alpha\restriction\,\mtcl U^\alpha)$-generic.
\end{enumerate}

Then $t\restriction\mtcl U^\alpha$ forces in $\bbP_\alpha\restriction\,\mtcl U^\alpha$ that $(M^0[\name{G}_{\bbP_\alpha\restriction\,\mtcl U^\alpha}]; \in)\cong (M^1[\name{G}_{\bbP_\alpha\restriction\,\mtcl U^\alpha}]; \in)$ as witnessed by the function sending $(\name X)_{\name{G}_{\bbP_\alpha\restriction\,\mtcl U^\alpha}}$, for a $\bbP_\alpha\restriction\,\mtcl U^\alpha$-name $\name X\in M^0$, to $(\Psi_{M^0, M^1}(\name X))_{\name{G}_{\bbP_\alpha\restriction\,\mtcl U^\alpha}}$.
\end{lemma}

\begin{proof}
Suppose $\name X$, $\name Y\in M^0$ are $\bbP_\alpha\restriction\,\mtcl U^\alpha$-names and some condition $t'\in\bbP_\alpha\restriction\,\mtcl U^\alpha$ such that $t'\leq_{\bbP_\alpha\restriction\,\mtcl U^\alpha} t\restriction\mathcal U^\alpha$ forces $\name X\in\name Y$. By the $(M^0, \bbP_\alpha\restriction\,\mtcl U^\alpha)$-genericity of $t\restriction\mathcal U^\alpha$ we may assume, after extending $t'$ if necessary, that $t'$ extends some $s\in \bbP_\alpha\restriction\,\mtcl U^\alpha\cap M^0$ such that $s\Vdash_{\bbP_\alpha\restriction\,\mtcl U^\alpha}\name X\in\name Y$.

Let $\Vdash^*$ denote the forcing relation for $\bbP_\alpha\restriction\,\mtcl U^\alpha$ restricted to formulas with names in $\cH(\kappa)$. Since $\Vdash^*$ is definable from $\Vdash^*_\alpha$ and $\bbP_\alpha$ and $\Psi_{M^0, M^1}$ is an isomorphism between $(M^0; \in, \Phi_{\alpha+1}\cap M^0)$ and $(M^1; \in, \Phi_{\alpha+1}\cap M^1)$, it follows (since of course $\name X$ and $\name Y$ are both in $\cH(\kappa)$) that $$\Psi_{M^0, M^1}(s)\Vdash_{\bbP_\alpha\restriction\,\mtcl U^\alpha}\Psi_{M^0, M^1}(\name X)\in \Psi_{M^0, M^1}(\name Y)$$
Since $t'\leq_{\bbP_\alpha\restriction\,\mtcl U^\alpha} \Psi_{M^0, M^1}(s)$ by Lemma \ref{symm-lem}, it follows that $t'\Vdash_{\bbP_\alpha\restriction \,\mtcl U^\alpha}\Psi_{M^0, M^1}(\name X)\in \Psi_{M^0, M^1}(\name Y)$. And arguing similarly, if $t'\Vdash_{\bbP_\alpha\restriction \,\mtcl U^\alpha}\name X\notin \name Y$, then also $t'\Vdash_{\bbP_\alpha\restriction \,\mtcl U^\alpha}\Psi_{M^0, M^1}(\name X)\notin \Psi_{M^0, M^1}(\name Y)$, and if $t'\Vdash_{\bbP_\alpha\restriction \,\mtcl U^\alpha}\name X= \name Y$, then also $t'\Vdash_{\bbP_\alpha\restriction \,\mtcl U^\alpha}\Psi_{M^0, M^1}(\name X)= \Psi_{M^0, M^1}(\name Y)$.

The above argument shows that the function sending $(\name X)_{\name{G}_{\bbP_\alpha\restriction\,\mtcl U^\alpha}}$, for a $\bbP_\alpha\restriction\,\mtcl U^\alpha$-name $\name X\in M^0$, to $(\Psi_{M^0, M^1}(\name X))_{\name{G}_{\bbP_\alpha\restriction\,\mtcl U^\alpha}}$, is forced by $t\restriction\mathcal U^\alpha$ to be a well-defined isomorphism between  $(M^0[\name{G}_{\bbP_\alpha\restriction\,\mtcl U^\alpha}]; \in)$ and $(M^1[\name{G}_{\bbP_\alpha\restriction\,\mtcl U^\alpha}]; \in)$.
\end{proof}

The following is our properness lemma.

\begin{lemma}\label{properness}
Let $\alpha<\kappa$. Then the following holds.

\begin{enumerate}
\item[$(1)_\alpha$] Let $p\in\bbP_\alpha$, let $N^*$ be a countable elementary submodel of $\cH(\kappa^{+(1+\alpha+1)})$ such that $\phi$, $\triangleleft\in N^*$, let $N=N^*\cap\cH(\kappa)$, and suppose $(N, \rho)\in\Delta_p$ for all $\rho\in N\cap (\alpha+1)$. Then $p$ is $(N^*, \bbP_\alpha)$-generic.

\item[$(2)_\alpha$] Let $p\in\bbP_\alpha\restriction\,\mtcl U^\alpha$, let $N^*$ be a countable elementary submodel of $\cH(\kappa^{+(1+\alpha+1)})$ such that $\phi$, $\triangleleft\in N^*$, let $N=N^*\cap\cH(\kappa)$, and suppose $(N, \rho)\in\Delta_p$ for all $\rho\in N\cap \mtcl U^\alpha$. Then $p$ is $(N^*, \bbP_\alpha\restriction\,\mtcl U^\alpha)$-generic.
\end{enumerate}
\end{lemma}

\begin{proof}
The proof will be by induction on $\alpha$. We will only give the proof of $(1)_\alpha$ explicitly as the proof of $(2)_\alpha$ is essentially the same.

For $\alpha=0$, the conclusion  of $(1)_\alpha$ follows trivially from Lemma \ref{amalg}.
 We may thus assume in what follows that $\alpha>0$.

Suppose first that $\alpha$ is a successor ordinal, $\alpha=\alpha_0+1$. Let $D\in N^*$ be an open dense subset of $\bbP_\alpha$ and let $p'\in\bbP_\alpha$ be a condition extending $p$. By further extending $p'$ if necessary, we may assume that $p'\in D$. We may assume that $\alpha_0\in \dom(F_{p'})$ as otherwise the proof is a simpler version of the proof in this case.

 Let $G$ be any $\bbP_{\alpha_0}$-generic filter such that $p'\restriction \alpha_0\in G$, let $G_0=G\cap\bbP_{\alpha_0}\restriction\,\mtcl U^{\alpha_0}$, and let us note that $G_0$ is $\bbP_{\alpha_0}\restriction\,\mtcl U^{\alpha_0}$-generic over $\bold V$ by Lemma \ref{compl1}.
  Working in $N^*[G_0]$, let $E_0$ be the set of $\nu<\omega_1$ for which there is some $r\in D$ such that:
 \begin{enumerate}
 \item $r\restriction\mtcl U^{\alpha_0}\in G_0$;
 \item $\alpha_0\in \dom(F_r)$ and $F_r(\alpha_0)=\nu$.
 \end{enumerate}

 Let $\bbQ=(\name{\bbQ}_{\alpha_0})_{G_0}$ and let $E$ be the set of $\nu\in \omega_1$ such that
 \begin{itemize}
 \item $\nu \in E_0$, or else
 \item $\nu$ is incompatible in $\bbQ$ with all conditions in $E_0$.
 \end{itemize}

 $E$ is of course in $N^*[G_0]$, and it is trivially a predense subset of $\bbQ$. Also, given any $\nu_0\in E_0$, every $\nu<\omega_1$ such that $\nu\leq_{\bbQ}\nu_0$ is also in $E_0$.

Since $F_{p'}(\alpha_0)$ is $(N[G_0], \bbQ)$-generic, by Lemma \ref{cc} it is also $(N^*[G_0], \bbQ)$-generic. Hence, there is some $\nu^*\in \delta_N\cap E$ which is $\bbQ$-compatible with $F_{p'}(\alpha_0)$. Since $p'\in D$, it follows that in fact $\nu^*\in E_0$. Let us fix $r^*\in G_0$ for which there is some $r\in D$ such that $r^*=r\restriction\mtcl U^{\alpha_0}$, $\alpha_0\in \dom(F_r)$, and $F_r(\alpha_0)=\nu^*$. Let $\bbP_{\alpha_0}/G_0=\{t\in\bbP_{\alpha_0}\,:\, t\restriction\mtcl U^{\alpha_0}\in G_0\}$.  By induction hypothesis $(1)_{\alpha_0}$, $p'\restriction \alpha_0$ is $(N^*, \bbP_{\alpha_0})$-generic. Since $\bbP_{\alpha_0}\restriction\,\mtcl U^{\alpha_0}$ is a complete suborder of $\bbP_{\alpha_0}$, it follows that
\begin{enumerate}
\item $p'\restriction \mtcl U^{\alpha_0}$ is $(N^*, \bbP_{\alpha_0}\restriction\,\mtcl U^{\alpha_0})$-generic\footnote{This of course follows also from $(2)_{\alpha_0}$.} and
\item $p'\restriction \mtcl U^{\alpha_0}$ forces $p'\restriction\alpha_0$ to be $(N^*[G_0], \bbP_{\alpha_0}/G_0)$-generic.\footnote{This is true by (1) together with the fact that, since $\bbP_{\alpha_0}\restriction\,\mtcl U^{\alpha_0}$ is a complete suborder of $\bbP_{\alpha_0}$, $\bbP_{\alpha_0}$ can be represented as a two-step iteration $(\bbP_{\alpha_0}\restriction\,\mtcl U^{\alpha_0})\ast\name{\mtcl Q}$---where $\name{\mtcl Q}$ is a name for the suborder of $\bbP_{\alpha_0}$ consisting of those $t\in\bbP_{\alpha_0}$ such that $t\restriction\,\mtcl U^{\alpha_0}\in\name{G}_{\bbP_{\alpha_0}\restriction\,\mtcl U^{\alpha_0}}$---and together with the fact that a condition $(a, \name b)$ in a forcing $\mathbb A\ast\name{\mathbb B}$ is $(M, \mathbb A\ast\name{\mathbb B})$-generic, for a given model $M$, if and only if $a$ is $(M, \mathbb A)$-generic and $a$ forces $\name b$ to be $(M[\name{G}_{\mathbb A}], \name{\mathbb B})$-generic.}
\end{enumerate}

By (1), we may assume that $r^*\in N$. But then, by (2) and since $p'\restriction\alpha_0\in G$ and $r^*\in N$, we may find $r_0\in N\cap D$ such that:
\begin{itemize}
\item $r_0\restriction \alpha_0\in G$;
\item $r_0\restriction\mtcl U^{\alpha_0}=r^*$;
\item $\alpha_0\in \dom(F_{r_0})$ and $F_{r_0}(\alpha_0)=\nu^*$.
\end{itemize}

Let $p''$ be a condition in $\bbP_{\alpha_0}$ forcing the above for $r_0$ and forcing some $\nu^{**}<\omega_1$ to be a common extension of $\nu^*$ and $F_{p'}(\alpha_0)$ in $\name{\bbQ}$. Let $F=F_{p''}\cup\{(\alpha_0, \nu^{**})\}$. Then, using Lemma \ref{amalg}, we have that $(F, \Delta)$, for $$\Delta=\Delta_{p''}\cup\Delta_{p'}\cup\Delta_{r_0}\cup\{(\Psi_{N, N'}(M), \alpha)\,:\,(M, \alpha)\in\Delta_{r_0},\,(N', \alpha)\in\Delta_{p'},\,\delta_{N'}=\delta_N\}),$$ is a condition in $\bbP_\alpha$, and it extends both $p'$ and $r_0$. To see that $(F, \Delta)\in\bbP_\alpha$, the only possibly nontrivial verification is that of clause (3) in the definition of condition. For this, we note that if $(M, \alpha)\in\Delta_{r_0}$, $(N', \alpha)\in\Delta_{p'}$ is such that $\delta_{N'}=\delta_N$, and $\beta\in\mtcl U^{\alpha_0}\cap\Psi_{N, N'}(M)$, then $(\Psi_{N, N'}(M), \beta)\in\Delta_{p''}$ since $\beta=\Psi_{N', N}(\beta)\in\mtcl U^{\alpha_0}\cap M$ and hence $(M, \beta)\in\Delta_{r_0}$ (as $(M, \alpha)\in\Delta_{r_0}$). This finishes the proof in this case.

Let us now consider the case that $\alpha$ is a limit ordinal. Again, let $D\in N^*$ be an open dense subset of $\bbP_\alpha$ and let $p'\in\bbP_\alpha$ be a condition extending $p$, which we  may assume is in $D$. Let $\bar\alpha\in N\cap\alpha$ be high enough so that $\dom(F_{p'})\cap [\bar\alpha,\,\alpha)\cap N=\emptyset$. If $\cf(\alpha)=\omega$, we may assume that in fact $\dom(F_{p'})\setminus\bar\alpha=\emptyset$; and if $\cf(\alpha)>\omega$, we may assume that $M\cap [\bar\alpha, \,\alpha)\cap N=\emptyset$ for every $M\in\dom(\Delta_{p'})$ such that $\delta_{M}<\delta_N$.\footnote{Otherwise there would be, by the symmetry and finiteness of $\dom(\Delta_{p'})$, some $M\in\dom(\Delta_{p'})\cap N$ cofinal in $\alpha$, which is impossible since $\cf(\alpha)>\omega$.}
 Let also $G\in\bbP_{\bar\alpha}$ be generic over $\bold V$ and such that $p'\restriction\bar\alpha\in G$.

Suppose first that $\cf(\alpha)=\omega$. In this case, working in $N^*[G]$ we may find a condition $r\in D$ such that:
\begin{enumerate}
\item $r\restriction\bar\alpha\in G$,
\item $\dom(F_r)\sub\bar\alpha$, and
\item $\Delta_{p'}\cap N\sub\Delta_r$.
\end{enumerate}

Such an $r$ can be found, by correctness of $N^*[G]$, since the existence of such a condition is a true statement, as witnessed by $p'$, which is expressible by a sentence with parameters in $N^*[G]$ thanks to the choice of $\bar\alpha$.

By induction hypothesis $(1)_{\bar\alpha}$ we have that $p'\restriction \bar\alpha$ is $(N^*, \bbP_{\bar\alpha})$-generic. Hence we may assume that $r\in N$. Let $p''\in G$ be a condition extending $p'\restriction\bar\alpha$ and $r\restriction \bar\alpha$ and deciding $r$. Then we have that $$(F, \Delta)=(F_{p''}, \Delta_{p''}\cup\Delta_{p'}\cup\Delta_r\cup\{(\Psi_{N, N'}(M), \rho)\,:\,(M, \rho)\in\Delta_r,\,(N', \rho)\in\Delta_{p'},\,\delta_{N'}=\delta_N\})$$ is a condition in $\bbP_\alpha$. Indeed, all clauses in the definition of $\bbP_\alpha$-condition, except for (3), are straightforwardly verified for this object using Lemma \ref{amalg} and the fact that all of $p'$, $p''$ and $r$ are conditions. For clause (3) we argue very much as in the successor case. It is enough to show that if $(M, \rho)\in\Delta_r$ and $(N', \rho)\in\Delta_{p'}$ is such that $\delta_{N'}=\delta_N$, then $(\Psi_{N, N'}(M), \bar\rho)\in\Delta$ for every $\bar\rho\in\mtcl U^\rho\cap \Psi_{N, N'}(M)$. But this is immediate as then $\bar\rho=\Psi_{N', N}(\bar\rho)\in\mtcl U^\rho\cap M$ (and therefore $(M, \bar\rho)\in\Delta_r$) and also $(N', \bar\rho)\in\Delta_{p'}$ (since $\bar\rho\in\mtcl U^\rho\cap N'$). Once we know that $(F, \Delta)$ is a condition, it it trivial to see that it extends both $p'$ and $r$, which finishes the proof in this subcase.

Finally, suppose $\cf(\alpha)\geq\omega_1$. This time, working  in $N^*[G]$, we may find a condition $r\in D$ such that $r\restriction\bar\alpha\in G$. As in the previous subcase, we may assume that $r\in N$.  Let $p''\in G$ be a condition extending $p'\restriction\bar\alpha$ and $r\restriction\bar\alpha$ and deciding $r$.
 Let $(\beta_i)_{i<n}$ be the strictly increasing enumeration of $\dom(F_r)\setminus\bar\alpha$.
 We may assume that $n>0$ since otherwise we can finish as in (a simpler version of) the previous subcase.

Let $$q=(F_{p''}, \Delta_{p''}\cup\Delta_{p'}\cup\Delta_r\cup \{(\Psi_{N, N'}(M), \rho)\,:\,(M, \rho)\in\Delta_r,\,(N', \rho)\in\Delta_{p'},\,\delta_{N'}=\delta_N\}),$$ and let us note that $q$ is a condition for the same reasons as for the object $(F, \Delta)$ constructed in the $\cf(\alpha)=\omega$ case. 
%$q=(F_{p''}, \Delta_{p''}\cup\Delta_{p'}\restriction\beta_0\cup\Delta_r\restriction\beta_0)$. 
We now build a certain decreasing sequence $(q_i^+)_{i<n}$ of conditions extending $q\restriction\beta_0$. For this, at step $i$ of the construction we first fix a condition $q_i$ in $\bbP_{\beta_i}$ such that
\begin{enumerate}
\item $q_i$ extends $q\restriction\beta_i$,
%\item $(\Delta_{p'}\cup\Delta_r)\restriction\beta_i\sub \Delta_{q_i}$,
\item $q_i$ extends $q^+_{i-1}$ if $i>0$, and such that
\item for some $\nu_i<\omega_1$, $q_i$ forces $\nu_i$ to be $(M[\name{G}_{\bbP_{\beta_i}\restriction\,\mtcl U^{\beta_i}}],  \name{\bbQ}_{\beta_i})$-generic for every $M$ such that $(M, \beta_i+1)\in\Delta_{p'}$.
\end{enumerate}

Then we let 
%$q_i^+=(F_{q_i}\cup\{(\beta_i, \nu_i)\}, \Delta_{q_i}\cup(\Delta_{p'}\cup\Delta_r)\restriction\beta_{i+1})$, 
$q_i^+=(F_{q_i}\cup\{(\beta_i, \nu_i)\}, \Delta_{q_i}\cup(\Delta_q\restriction\beta_{i+1}))$, 
where $\beta_n=\alpha$, which will clearly be a $\bbP_{\beta_{i+1}}$-condition.

It is trivial to find $q_i$ satisfying (1) and (2). The reason $q_i$ can be found in such a way that it satisfies (3) as well is that $\mtcl M=\{M[\name{G}_{\bbP_{\beta_i}\restriction\,\mtcl U^{\beta_i}}]\,:\,(M, \beta_i+1)\in\Delta_{p'},\,\delta_M\geq\delta_N\}$ is forced to be a symmetric system by Lemma \ref{same-subsets-o1-ext} and therefore we can apply an argument as in the proof of Lemma \ref{symm-small-proper} with that set. More specifically, working in $\bold{V}^{\bbP_{\beta_i}\restriction q_i}$, let $(N^i_k)_{k\leq l}$ be a $\sub$-maximal $\in$-chain of members of $\mtcl M$. Noting then that $F_r(\beta_i)\in N^i_0$, we build a decreasing sequence $(\nu^i_k)_{k\leq l+1}$ of conditions in $\name{\bbQ}_{\beta_i}$ by setting $\nu^i_0=F_r(\beta_i)$ and letting $\nu^i_{k+1}$ be, for each $k\leq l$, some $(N^i_k, \name{\bbQ}_{\beta_i})$-generic condition in $N^i_{k+1}$ extending $\nu^i_k$ (where $N^i_{l+1}=\cH(\kappa)$). The reason $\nu^i_{k+1}$ can be found is that $N_k^i$ is the intersection with $\cH(\kappa)$ of an elementary submodel $N^*$ of $\cH(\kappa^{+(1+\beta_i+1)})$ containing a $\bbP_{\beta_i}\restriction\,\mtcl U^{\beta_i}$-name $\name E$ for a club of $[\cH(\kappa)]^{\aleph_0}$ in $\bold V$ witnessing that $\name{\bbQ}_{\beta_i}$ is forced, in $\bbP_{\beta_i}\restriction\,\mtcl U^{\beta_i}$, to be $(\name{G}_{\bbP_{\beta_i}\restriction\,\mtcl U^{\beta_i}}, \bbP_{\beta_i})$-proper in $\bold V[\name{G}_{\bbP_{\beta_i}\restriction\,\mtcl U^{\beta_i}}]$. We then have, by $(2)_{\beta_i}$, that $p'\restriction\mtcl U^{\beta_i}$ is $(N^*, \bbP_{\beta_i}\restriction\,\mtcl U^{\beta_i})$-generic and therefore it forces $N$ to be in $\name E$. Hence we can find $\nu^i_{k+1}$ as desired.

%Finally, let $q^+=(F_{q_{n-1}^+}, \Delta_{q_{n-1}^+}\cup\Delta_q)$ ($=((F_{q_{n-1}^+}, \Delta_{q_{n-1}^+}\cup\{(M, \rho)\in \Delta_q, \beta_{n-1}<\rho\})$). 

Then $q^+_{n-1}$ is a condition in $\bbP_\alpha$ extending both $p'$ and $r$, which concludes the proof of $(1)_\alpha$ in this subcase.

It is perhaps worth mentioning here that thanks to the above choice of $\bar\alpha$, the subsequent argument in $\bold{V}[G]$ succeeded in producing a condition $r$ which could be amalgamated with $p'$ into a stronger condition. The point is that, in the $\cf(\alpha)\geq\omega_1$ case, we managed to make sure that no points $\beta$ in the domain of $F_r$ above $\bar\alpha$ be in $\dom(F_{p'})$ or in any relevant models $M$ of height less than $N$---otherwise, the working part $F_r(\beta)$ could have conflicted with $F_{p'}(\beta)$, if $\beta\in\dom(F_{p'})$, or with the genericity requirement for some $M$ as above if $\beta\in M$ for some such $M$. And of course, in the case $\cf(\alpha)=\omega$, by the choice of $\bar\alpha$ we were able to ensure that $\dom(F_r)\setminus\bar\alpha$ be actually empty, so none of the problems we just mentioned could possibly arise. 
\end{proof}

We can now prove the following.

\begin{lemma}\label{properness1}
Let $\alpha<\kappa$,  $p\in\bbP_\alpha$, and let $(N, \alpha)$ be a model with marker such that $p\in N$.
Then there is an extension $p^*\in\bbP_\alpha$ of $p$ such that $(N, \rho)\in \Delta_{p^*}$ for all $\rho\in N\cap (\alpha+1)$.
\end{lemma}

\begin{proof}
We prove this by induction on $\alpha$. We may assume that $\dom(F_p)\neq\emptyset$ as otherwise we may simply let $p^*=(\emptyset, \Delta_p\cup\{(N, \rho)\,:\,\rho\in N\cap (\alpha+1)\})$.

Let $\bar\alpha=\max(\dom(F_p)\cap\alpha)$. By induction hypothesis we know that there is an extension $\bar p$ of $p\restriction\bar\alpha$ such that  $(N, \rho)\in\Delta_{\bar p}$ for each  $\rho\in N\cap (\bar\alpha+1)$.
 Let $N^*\prec \cH(\kappa^{+(1+\bar\alpha+1)})$ be a countable model such that $\phi$, $\triangleleft\in N^*$ and $N=N^*\cap\cH(\kappa)$. We argue now as in the proof of Lemma \ref{properness} $(1)_\alpha$ for the $\cf(\alpha)\geq\omega_1$ subcase. We note that $N^*$ contains a $\bbP_{\bar\alpha}\restriction\,\mtcl U^{\bar\alpha}$-name $\name E$ for a club of $[\cH(\kappa)]^{\aleph_0}$ in $\bold V$ witnessing that $\name{\bbQ}_{\bar\alpha}$ is a $(\name{G}_{\bbP_{\bar\alpha}\restriction\,\mtcl U^{\bar\alpha}}, \bbP_{\bar\alpha})$-proper poset. By Lemma \ref{properness} $(2)_{\bar\alpha}$, $\bar p\restriction\mtcl U^{\bar\alpha}$ is $(N^*, \bbP_{\bar\alpha}\restriction\,\mtcl U^{\bar\alpha})$-generic. In particular, $\bar p$ forces that $N=N^*\cap\cH(\kappa)\in \name E$. Since $(N, \rho)\in \Delta_{\bar p}$ for all $\rho\in N\cap (\bar\alpha+1)$, we may then extend $\bar p\restriction\mtcl U^{\bar\alpha}$ to a condition $p'\in\bbP_{\bar\alpha}\restriction\,\mtcl U^{\bar\alpha}$ for which there is some $\nu$ such that $p'$ forces $\nu$ to be an $(N[\name{G}_{\bbP_{\bar\alpha}\restriction\,\mtcl U^{\bar\alpha}}], \name{\bbQ}_{\bar\alpha})$-generic condition in $\name{\bbQ}_{\bar\alpha}$ stronger than $F_p(\bar\alpha)$.

Finally, let $p^*=(F_*, \Delta_*)$, where

 \begin{itemize}

\item $F_*=F_{p'}\cup (F_{\bar p}\restriction \bar\alpha\setminus\mtcl U^{\bar\alpha})\cup\{(\bar\alpha, \nu)\}$ and
\item $\Delta_*= \Delta_{p'}\cup\Delta_{\bar p}\cup\Delta_p\cup\{(N, \rho)\,:\, \rho\in N\cap (\alpha+1)\}$.
\end{itemize}

Then $p^*$ is an extension of $p$ as desired.
\end{proof}

Lemmas \ref{properness} and \ref{properness1}, together with Lemma \ref{cc} (for the case $\alpha=\kappa$), yield the following corollary.

\begin{corollary}\label{cor-prop} For each $\alpha\leq\kappa$, $\bbP_\alpha$ is proper.
\end{corollary}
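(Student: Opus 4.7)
The plan is to establish properness of $\bbP_\alpha$ in two stages: first for $\alpha<\kappa$, which will be nearly immediate from Lemma \ref{properness}, and then for $\alpha=\kappa$, where I will additionally exploit the $\aleph_2$-chain condition from Lemma \ref{cc} to reduce to the first case.

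For $\alpha<\kappa$, fix any countable $N^*\prec\cH(\kappa^+)$ with $\phi,\bbP_\alpha,p\in N^*$ and set $N=N^*\cap\cH(\kappa)$. Invoking clause $(1)_\alpha$ of Lemma \ref{properness} with $n=0$, $\beta=\alpha$, and $N_0=N$ yields an extension $p^*\leq_\alpha p$ such that $(N,\rho)\in\Delta_{p^*}$ for every $\rho\in N\cap(\alpha+1)$. Clause $(2)_\alpha$ then directly delivers the required $(N^*,\bbP_\alpha)$-genericity of $p^*$, so $\bbP_\alpha$ is proper.

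For $\alpha=\kappa$, let $N^*\prec\cH(\kappa^+)$ be countable with $\phi,\bbP_\kappa,p\in N^*$. By elementarity pick $\beta\in N^*\cap\kappa$ with $p\in\bbP_\beta$, and using the previous case fix $p^*\in\bbP_\beta$ extending $p$ which is $(N^*,\bbP_\beta)$-generic. To see $p^*$ is $(N^*,\bbP_\kappa)$-generic, let $A\in N^*$ be any maximal antichain of $\bbP_\kappa$: Lemma \ref{cc} gives $|A|\leq\aleph_1$, so by elementarity there is $\gamma\in N^*\cap\kappa$ with $\gamma\geq\beta$ and $A\subseteq\bbP_\gamma$; since $\bbP_\gamma\lessdot\bbP_\kappa$, the set $A$ is in fact a maximal antichain of $\bbP_\gamma$. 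One then shows that $A\cap N^*$ is predense below $p^*$ in $\bbP_\gamma$ and finally bootstraps the resulting compatibility up to $\bbP_\kappa$ via Lemma \ref{compl0}.

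The main obstacle, and really the only substantive step, is the transfer of $(N^*,\bbP_\beta)$-genericity to $(N^*,\bbP_\gamma)$-genericity for $\beta\leq\gamma$ in $N^*\cap\kappa$. I plan to carry this out by considering the set $D=\{t\restriction\beta\,:\, t\in\bbP_\gamma,\, t\leq_\gamma a\text{ for some }a\in A\}$, observing that $D\in N^*$ is dense in $\bbP_\beta$ (using maximality of $A$ in $\bbP_\gamma$ together with $\bbP_\beta\lessdot\bbP_\gamma$), then using $(N^*,\bbP_\beta)$-genericity of $p^*$ to find, for any given extension $q\leq_\kappa p^*$ in $\bbP_\delta$ ($\delta\geq\gamma$), a common extension $r\leq_\beta q\restriction\beta,b$ with $b\in D\cap N^*$, letting $t_b\in\bbP_\gamma\cap N^*$ and $a\in A\cap N^*$ witness $b\in D$, and finally amalgamating $r$, $t_b$ and $q$ via Lemma \ref{compl0} to produce a condition in $\bbP_\delta\subseteq\bbP_\kappa$ extending both $q$ and $a$. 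Carrying out this amalgamation carefully (and checking it respects the marker structures) is the only piece of real content in the argument.
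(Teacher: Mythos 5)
Your treatment of $\alpha<\kappa$ is exactly the intended argument: apply $(1)_\alpha$ with $n=0$ and $\beta=\alpha$ to load $(N,\rho)$ into the side conditions for all $\rho\in N\cap(\alpha+1)$, then quote $(2)_\alpha$. The case $\alpha=\kappa$ also has the right shape (use the $\aleph_2$-c.c.\ and the regularity of $\kappa$ to capture each maximal antichain $A\in N^*$ inside some $\bbP_\gamma$, $\gamma\in N^*\cap\kappa$, and bootstrap to $\bbP_\delta$ via Lemma \ref{compl0}). However, the step you describe as ``the only piece of real content'' is a genuine gap, not a routine verification. Having found $b\in D\cap N^*$, a witness $t_b\in\bbP_\gamma\cap N^*$ with $t_b\leq_\gamma a$ and $t_b\restriction\beta=b$, and a common extension $r\leq_\beta q\restriction\beta,\, b$, there is in general \emph{no} common extension of $q$ and $t_b$ at all: Lemma \ref{compl0} only glues a condition onto an extension of its own restriction, whereas here $t_b$ and $q$ each carry independent data on $[\beta,\gamma)$. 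Concretely, if $\beta'\in\dom(F_{t_b})\cap\dom(F_q)\cap[\beta,\gamma)$, the working parts $F_{t_b}(\beta')$ and $F_q(\beta')$ need a common extension in $\name{\bbQ}_{\beta'}$, which nothing guarantees; and even at $\beta'\in\dom(F_{t_b})\setminus\dom(F_q)$, clause (4) would require $F_{t_b}(\beta')$ to be generic for every $M$ with $(M,\beta'+1)\in\Delta_q$, which again fails in general. So compatibility of $q\restriction\beta$ with $t_b\restriction\beta$ does not transfer to compatibility of $q$ with $t_b$, and $(N^*,\bbP_\beta)$-genericity of $p^*$ does not yield predensity of $A\cap N^*$ below $p^*$ in $\bbP_\gamma$. (This is precisely the difficulty that forces the $(q_i^+)_{i<n}$ construction in the $\cf(\alpha)\geq\omega_1$ case of $(2)_\alpha$.)

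The repair, and what the paper intends, is to arrange $(N^*,\bbP_\gamma)$-genericity directly for \emph{every} $\gamma\in N^*\cap\kappa$ rather than transferring it up from $\beta$. After obtaining $p^*\in\bbP_\beta$ via $(1)_\beta$, enlarge $\Delta_{p^*}$ by adding $(N,\rho)$ for every $\rho\in N\cap\kappa$. This is free: the new markers all exceed $\beta$, hence exceed $\beta'+1$ for every $\beta'\in\dom(F_{p^*})$, so they create no new instances of clause (4), and clauses (2), (3) and the symmetry requirements are immediate. Calling the result $p^{**}$, for each $\gamma\in N^*\cap\kappa$ the restriction $p^{**}\restriction\gamma$ now satisfies the hypothesis of $(2)_\gamma$ and is therefore $(N^*,\bbP_\gamma)$-generic. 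Then, for $A\in N^*$ a maximal antichain of $\bbP_\kappa$ with $A\subseteq\bbP_\gamma$ and $q\leq_\kappa p^{**}$ with $q\in\bbP_\delta$, the dense set $\{t\in\bbP_\gamma:\,t\leq_\gamma a\text{ for some }a\in A\}$ meets $N^*$ in a condition compatible with $q\restriction\gamma$ \emph{inside} $\bbP_\gamma$, and only at that point does your Lemma \ref{compl0} bootstrap (with the cut taken at $\gamma$, not at $\beta$) legitimately produce a condition of $\bbP_\delta$ extending both $q$ and some $a\in A\cap N^*$.
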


The following lemma shows that $\bbP_\kappa$ produces a model of the relevant forcing axiom.

\begin{lemma}\label{pfaomega_1}
$\bbP_\kappa$ forces $\PFA_{{<}\kappa}(\aleph_1)$.
\end{lemma}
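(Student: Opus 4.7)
My plan is a book-keeping argument coupled with a density calculation inside $\bbP_\kappa$. Fix, in $\bold V^{\bbP_\kappa}$, a proper poset $\bbQ$ whose underlying set we may take to be $\omega_1^{\bold V}$, together with dense sets $\{D_i:i<\lambda\}$ with $\lambda<\kappa$. Let $\name{\bbQ}$, $\name D_i$ be $\bbP_\kappa$-names for these. The goal is to produce a filter on $\bbQ$ meeting every $(\name D_i)_{\dot G_{\bbP_\kappa}}$.

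By the $\aleph_2$-c.c.\ of $\bbP_\kappa$ (Lemma \ref{cc}) and the regularity of $\kappa\geq\omega_2$, a nice $\bbP_\kappa$-name for a subset of $\omega_1$ is coded by $\aleph_1$-many antichains each of size $\leq\aleph_1$, all of whose conditions already lie in some $\bbP_{\alpha_0}$ with $\alpha_0<\kappa$; so $\name{\bbQ}$ is coded by a sequence $\langle A_\xi:\xi<\omega_1\rangle$ of antichains of $\bbP_{\alpha_0}$. By the unboundedness of $\phi^{-1}$, pick $\alpha>\alpha_0$ with $\phi(\alpha)=\langle A_\xi:\xi<\omega_1\rangle$. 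By the very definitions of $\overline{\mtcl U}^\alpha$ and $\mtcl U^\alpha$, every condition appearing in any $A_\xi$ lies in $\bbP_\alpha\restriction\,\mtcl U^\alpha$, making $\name{\bbQ}$ a $\bbP_\alpha\restriction\,\mtcl U^\alpha$-name for a forcing notion on $\omega_1^{\bold V}$ (up to the canonical coding invoked in the definition of $\name{\bbQ}_\alpha$).

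The main obstacle is to verify that $\name{\bbQ}$ is forced by $\bbP_\alpha\restriction\,\mtcl U^\alpha$ to be $(\dot G_{\bbP_\alpha\restriction\,\mtcl U^\alpha},\bbP_\alpha)$-proper, which will secure $\name{\bbQ}_\alpha=\name{\bbQ}$. I would take the witnessing club $E\in\bold V$ to consist of traces on $\cH(\kappa)$ of countable $N^*\prec\cH(\kappa^+)$ containing $\phi,\bbP_\alpha,\bbP_\alpha\restriction\,\mtcl U^\alpha,\name{\bbQ}$. Given $N=N^*\cap\cH(\kappa)\in E$, a $q\in\bbP_\alpha$ satisfying the hypothesis of the clause (i.e.\ $q\restriction\mtcl U^\alpha\in\dot G$ and $(N,\rho)\in\Delta_q$ for all $\rho\in N\cap(\alpha+1)$), and $\nu\in\omega_1^{\bold V}\cap N[\dot G_{\bbP_\alpha\restriction\,\mtcl U^\alpha}]$: in $\bold V^{\bbP_\kappa}$ the name $\name{\bbQ}$ is proper and $N[\dot G_{\bbP_\alpha\restriction\,\mtcl U^\alpha}]$ is a countable set containing $\name{\bbQ}$ and $\nu$, so standard properness yields $\nu^*\leq_{\name{\bbQ}}\nu$ that is $(M,\name{\bbQ})$-generic for any sufficiently saturated countable $M\supseteq N[\dot G_{\bbP_\alpha\restriction\,\mtcl U^\alpha}]$, and in particular $(N[\dot G_{\bbP_\alpha\restriction\,\mtcl U^\alpha}],\name{\bbQ})$-generic. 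Since $\nu^*$ is an ordinal below $\omega_1^{\bold V}$ it lies in $\bold V$; and since the collection of dense subsets of $\name{\bbQ}$ lying in $N[\dot G_{\bbP_\alpha\restriction\,\mtcl U^\alpha}]$, as well as the relation ``$\nu^*$ meets $D$'', are absolute between $\bold V^{\bbP_\alpha\restriction\,\mtcl U^\alpha}$ and $\bold V^{\bbP_\kappa}$ (using that $\name{\bbQ}$ is already a $\bbP_\alpha\restriction\,\mtcl U^\alpha$-name), the genericity of $\nu^*$ persists in the intermediate model, as required.

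Granted $\name{\bbQ}_\alpha=\name{\bbQ}$, define $\dot H_\alpha=\{F_p(\alpha):p\in\dot G_{\bbP_\kappa},\ \alpha\in\dom(F_p)\}$, which the extension relation $\leq_{\alpha+1}$ immediately makes a filter in $\name{\bbQ}$. To finish I show that for each $i<\lambda$ the set
\[
\mtcl E_i=\{r\in\bbP_\kappa:\alpha\in\dom(F_r)\text{ and }r\Vdash_{\bbP_\kappa} F_r(\alpha)\in\name D_i\}
\]
is dense in $\bbP_\kappa$. Given $r$, extend so that $\alpha\in\dom(F_r)$, let $\nu=F_r(\alpha)$, and by forced density of $\name D_i$ in $\name{\bbQ}$ obtain $r'\leq r$ and $\nu^*\leq_{\name{\bbQ}_\alpha}\nu$ with $r'\Vdash\nu^*\in\name D_i$. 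Then apply the amalgamation machinery of Lemma \ref{amalg} and the argument of Lemma \ref{properness}$(1)_\alpha$ at coordinate $\alpha$ to extend further to a condition $r''$ whose value at $\alpha$ is a common $\leq_{\name{\bbQ}_\alpha}$-refinement of $\nu^*$ that is $(N[\dot G_{\bbP_\alpha\restriction\,\mtcl U^\alpha}],\name{\bbQ}_\alpha)$-generic for every marked model $(N,\alpha+1)\in\Delta_{r''}$. Replacing $\name D_i$ by its (still dense) downward closure keeps this refinement inside $\name D_i$, so $r''\in\mtcl E_i$. Genericity of $\dot G_{\bbP_\kappa}$ for each $\mtcl E_i$ then yields $\dot H_\alpha\cap(\name D_i)_{\dot G_{\bbP_\kappa}}\neq\emptyset$ for every $i<\lambda$, completing the proof.
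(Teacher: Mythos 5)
Your overall strategy is the paper's: use the $\aleph_2$-c.c.\ and regularity of $\kappa$ to catch $\name{\bbQ}$ as a $\bbP_{\alpha_0}$-name, use $\phi$ to find $\alpha$ with $\phi(\alpha)$ coding it, prove that $\bbP_\alpha\restriction\,\mtcl U^\alpha$ forces $\name{\bbQ}$ to be $(\dot G_{\bbP_\alpha\restriction\,\mtcl U^\alpha},\bbP_\alpha)$-proper so that $\name{\bbQ}_\alpha=\name{\bbQ}$, and then read off a filter from the working parts at coordinate $\alpha$. Your final density computation for each $\mtcl E_i$ is a slightly more hands-on version of the paper's observation that $H$ is a fully generic filter over $\bold V[\dot G_{\bbP_\alpha}]$, and is fine.

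There is, however, a gap at the crux of the claim. You pass from ``$\nu^*$ is $(M,\name{\bbQ})$-generic for a sufficiently saturated countable $M\supseteq N[\dot G_{\bbP_\alpha\restriction\,\mtcl U^\alpha}]$'' to ``$\nu^*$ is $(N[\dot G_{\bbP_\alpha\restriction\,\mtcl U^\alpha}],\name{\bbQ})$-generic,'' but genericity does \emph{not} pass down to submodels in general: for a dense $D\in N[\dot G_{\bbP_\alpha\restriction\,\mtcl U^\alpha}]$ one gets that $D\cap M$ is predense below $\nu^*$, and since $D\sub\omega_1$ this only yields predensity of $D\cap N[\dot G_{\bbP_\alpha\restriction\,\mtcl U^\alpha}]=D\cap\delta_N$ if one has arranged $M\cap\omega_1=\delta_N$. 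Producing such an $M$ in $\bold V^{\bbP_\kappa}$ is exactly where the work lies: one must first extend the hypothesized $q$ to a condition carrying $(N,\rho)$ for \emph{all} $\rho\in N\cap\kappa$ (via Lemma \ref{properness} $(1)_\beta$ for the relevant $\beta\in N\cap\kappa$, which is legitimate in the quotient $\bbP_\kappa/\dot G_{\bbP_\alpha\restriction\,\mtcl U^\alpha}$ by Lemma \ref{compl1}), and then invoke $(2)_\beta$ together with Lemma \ref{cc} to conclude $\delta_{N[\dot G_{\bbP_\kappa}]}=\delta_N$, so that $M=N^*[\dot G_{\bbP_\kappa}]$ works. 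The paper runs this as a contradiction argument, but the essential content is precisely this extension-and-preservation step, which your ``sufficiently saturated'' phrasing elides; without it the key implication is simply false. The remaining absoluteness step (predensity below $\nu^*$ transfers downward from $\bold V^{\bbP_\kappa}$ to $\bold V^{\bbP_\alpha\restriction\,\mtcl U^\alpha}$ because $\bbP_\alpha\restriction\,\mtcl U^\alpha\lessdot\bbP_\kappa$ and $\bbQ$ lives in the intermediate model) is correctly identified.
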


\begin{proof}
Let $\name{\bbQ}$ be a $\bbP_\kappa$-name for a proper forcing on $\omega_1^{\bold V[\name{G}_{\bbP_\kappa}]}$ ($=\omega_1^{\bold V}$) and let $\{\name{D}_i\,:\,i<\lambda\}$ be, for some $\lambda<\kappa$, $\bbP_\kappa$-names for dense subsets of $\name{\bbQ}$. By Lemma \ref{cc} and the fact that $\kappa\geq\omega_2$ is regular, there is some $\alpha_0<\kappa$ such that $\name{\bbQ}$ and $\name{D}_i$, for all $i<\lambda$, are $\bbP_\alpha$-names for all $\alpha$, $\alpha_0\leq\alpha<\kappa$. Letting $\langle A_i\,:\, i<\omega_1\rangle$ be an $\omega_1$-sequence of antichains of $\bbP_{\alpha_0}$ such that $\bigcup_{i<\omega_1}\{i\}\times A_i$ is a nice $\bbP_{\alpha_0}$-name for a subset of $\omega_1$ canonically encoding $\name{\bbQ}$, we may find $\alpha\geq\alpha_0$ such that $\phi(\alpha)=\langle A_i\,:\,i<\omega_1\rangle$.

\begin{claim}
$\bbP_\alpha\restriction\,\mtcl U^\alpha$ forces that $\name{\bbQ}$ is $(\name{G}_{\bbP_\alpha\restriction\,\mtcl U^\alpha}, \bbP_\alpha)$-proper.
\end{claim}

\begin{proof}
Let $f:[\cH(\kappa)]^{{<}\omega}\rightarrow\cH(\kappa)$ be a function with the property that for every countable $N\sub\cH(\kappa)$, if $f``[N]^{{<}\omega}\sub N$, then $N=N^*\cap \cH(\kappa)$ for some countable $N^*\prec \cH(\kappa^{+(\kappa+1)})$ such that $\phi$, $\langle\bbP_\gamma\,:\,\gamma<\kappa\rangle\in N^*$.
Let $G$ be $\bbP_\alpha\restriction\,\mtcl U^\alpha$-generic, let $\bbQ=\name{\bbQ}_G$, let $N$ be such that  $f``[N]^{{<}\omega}\sub N$ and such that,  for some $p\in\bbP_\alpha$ with $p\restriction\mtcl U^\alpha\in G$, $(N, \rho)\in \Delta_p$ for all $\rho\in N\cap \mtcl U^\alpha$, and let $\nu\in\delta_N$. It will suffice to show that there is some $\nu^*\leq_{\bbQ}\nu$ which is $(N[G], \bbQ)$-generic.

Suppose, for a contradiction, that this is not the case. By extending $p$ if necessary, we may then assume that  $p\restriction\mtcl U^\alpha$ forces that there is no  $\nu^*\leq_{\name{\bbQ}}\nu$ which is $(N[\name{G}_{\bbP_\alpha\restriction\,\mtcl U^\alpha}], \name{\bbQ})$-generic. Let now $\nu^*\in\omega_1$ and $q\leq_\kappa p$ be such that $(N, \beta)\in \Delta_q$ for each $\beta\in N\cap\kappa$ and such that $q$ forces that $\nu^*\leq_{\name{\bbQ}}\nu$ and that $\nu^*$ is $(N[\name{G}_{\bbP_\kappa}], \name{\bbQ})$-generic. Such a $q$ exists by the same argument as in the proof of Lemma \ref{properness1}, together with the choice of $\name{\bbQ}$ as a name for a proper forcing. We have that $q$ forces $\delta_{N[\name{G}_{\bbP_\kappa}]}=\delta_{N[\name{G}_{\bbP_\alpha\restriction\,\mtcl U^\alpha}]}=\delta_N$ by Lemma \ref{properness} $(1)_\beta$, for $\beta\in N\cap\kappa$, together with Lemma \ref{cc}. Since $\bbP_\alpha\restriction\,\mtcl U^\alpha\lessdot \bbP_\kappa$, it then follows that $q\restriction\mtcl U^\alpha$ forces $\nu^*$ to be $(N[\name{G}_{\bbP_\alpha\restriction\,\mtcl U^\alpha}], \name{\bbQ}_\alpha)$-generic. But that is a contradiction since $q\restriction\mtcl U^\alpha$ extends $p\restriction\mtcl U^\alpha$ in $\bbP_\alpha\restriction\,\mtcl U^\alpha$.
\end{proof}

By the claim, together with the choice of $\langle A_i\,:\, i<\omega_1\rangle$, we have that $\name{\bbQ}_\alpha=\name{\bbQ}$. This finishes the proof of the lemma since then $\bbP_{\alpha+1}$ forces $$H=\{\nu<\omega_1\,:\, \nu\in F_p(\alpha)\text{ for some }p\in\name{G}_{\bbP_{\alpha+1}}\text{ with }\alpha\in\dom(F_p)\}$$ to be a generic filter for $\name{\bbQ}$ over $\bold V[\name{G}_{\bbP_\alpha}]$ and hence such that $H\cap \name{D}_i\neq\emptyset$ for each $i<\lambda$. To see this, it suffices to argue that we may extend any condition $p\in\bbP_\kappa$ to a condition $p^*$ with $\alpha\in\dom(F_{p^*})$. But this is possible by an argument exactly as the one dealing with a fixed $\beta_i$ in the $\cf(\alpha)\geq\omega_1$-subcase of the proof of Lemma \ref{properness} $(1)_\alpha$.
\end{proof}

Finally, we prove that $\bbP_\kappa$ forces the right cardinal arithmetic.

\begin{lemma}\label{arithm}
$\bbP_\kappa$ forces $2^{\aleph_0}=2^{\aleph_1}=\kappa$.
\end{lemma}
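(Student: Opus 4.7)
The plan is to establish the two inequalities $2^{\aleph_0}\leq\kappa$ and $2^{\aleph_0}\geq\kappa$ separately in the generic extension. For the upper bound, I would first count $|\bbP_\kappa|$. Under $\GCH$ with $\kappa\geq\aleph_2$ regular, $|\cH(\kappa)|=\kappa$ and, because $\cf(\kappa)>\omega$, also $|[\cH(\kappa)]^{\aleph_0}|=\kappa^{\aleph_0}=\kappa$. A condition $p=(F_p,\Delta_p)$ is determined by a finite partial function $F_p\subseteq\kappa\times\omega_1$ together with a finite symmetric system of pairs $(N,\rho)$ with $N\in[\cH(\kappa)]^{\aleph_0}$ and $\rho\in N\cap\kappa$; both components give only $\kappa$-many options, so $|\bbP_\kappa|\leq\kappa$.

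Combined with the $\aleph_2$-chain condition (Lemma \ref{cc}), every $\bbP_\kappa$-name for a subset of $\omega$ has a nice representative, namely an $\omega$-sequence of antichains each of size at most $\aleph_1$. The number of such names is bounded by $\kappa^{\aleph_1}$, and since $\cf(\kappa)\geq\aleph_2>\aleph_1$, $\GCH$ gives $\kappa^{\aleph_1}=\kappa$. Hence $\bbP_\kappa$ forces $2^{\aleph_0}\leq\kappa$.

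For the lower bound, I would invoke Lemma \ref{pfaomega_1}. Fix any $\lambda<\kappa$ and any family $\langle r_\alpha : \alpha<\lambda\rangle$ of reals in the generic extension. The Cohen poset $2^{{<}\omega}$ is countable and c.c.c., in particular a proper forcing of size $\leq\aleph_1$; for each $\alpha<\lambda$ the set $D_\alpha$ of conditions $q$ admitting some $n\in\dom(q)$ with $q(n)\neq r_\alpha(n)$ is dense. Applying $\PFA(\aleph_1)_{{<}\kappa}$ yields a filter meeting every $D_\alpha$ whose union is a real distinct from each $r_\alpha$, so $2^{\aleph_0}>\lambda$. Since $\lambda<\kappa$ was arbitrary and $\kappa$ is regular, $2^{\aleph_0}\geq\kappa$. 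No real obstacle arises; the only mildly delicate point is the cardinal-arithmetic identity $\kappa^{\aleph_1}=\kappa$, which uses both $\GCH$ and the regularity of $\kappa\geq\aleph_2$, both standing hypotheses.
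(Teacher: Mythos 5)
Your proof is correct and follows essentially the same route as the paper: the lower bound via the forcing axiom applied to Cohen forcing, and the upper bound by counting nice names using the $\aleph_2$-chain condition and $\GCH$ (the paper's count is likewise $(\kappa^{\aleph_1})^{\aleph_0}=\kappa$). Your version just spells out the details the paper leaves implicit.
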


\begin{proof}
 $\Vdash_{\bbP_\kappa}2^{\aleph_0}\geq\kappa$ follows, for example, from $\Vdash_{\bbP_\kappa}\FA(\{\text{Cohen}\})_{{<}\kappa}$.
$\Vdash_{\bbP_\kappa}2^{\aleph_1}\leq\kappa$ follows from the fact that there are $(\kappa^{\aleph_1})^{\aleph_1}=\kappa$ nice $\bbP_\kappa$-names for subsets of $\omega_1$.
\end{proof}

Lemma \ref{arithm} concludes the proof of the theorem.

\section{A $\Pi_2$-rich model of $\PFA(\omega_1)_\kappa$ for large $\kappa$}\label{more}
Given a forcing notion $\bbQ$ and a collection $\mtcl N$ of countable models, let us say that a condition $q^*\in\bbQ$ is \emph{$\mtcl N$-symmetric} in case for all $q'\in\bbQ$ such that $q'\leq_{\bbQ}q^*$, all $M_0$, $M_1\in\mtcl N$ such that $M_0\cong M_1$, and all $q\in M_0\cap\bbQ$, if $q'\leq_{\bbQ} q$, then also $q'\leq_{\bbQ}\Psi_{M_0, M_1}(q)$.

We note (and will sometimes use implicitly) that any condition stronger than a given $\mtcl N$-symmetric condition is itself $\mtcl N$-symmetric.

Throughout the rest of the paper, we will be considering the notion of a condition $q^*$ in a forcing notion $\bbQ$ being $(N, \bbQ)$-generic for a model $N$, also in cases where $\bbQ\notin N$. In either case, the definition we will be working with is the usual one, formulated in terms of maximal antichains; i.e., we say that $q^*$ is \emph{$(N, \bbQ)$-generic} if for every maximal antichain $A$ of $\bbQ$ such that $A\in N$ and every extension $q'\leq_\bbQ q^*$ there is some $r\in A\cap N$ compatible with $q'$.

Given a forcing notion $\bbQ$ and a cardinal $\theta$ such that $\bbQ\sub\cH(\theta)$,\footnote{In other words, such that $|\TC(\bbQ)|\leq 2^{{<}\theta}$, where $\TC(\bbQ)$ is the transitive closure of $\bbQ$.} we say that $\bbQ$ is \emph{symmetrically proper relative to $\cH(\theta)$} in case there is a predicate $P\sub\cH(\theta)$ with the property that for every finite $P$-symmetric system $\mtcl N$ of countable elementary submodels of $\cH(\theta)$ containing $\bbQ$, if $q\in\bbQ\cap M$ for some $M\in\mtcl N$ of minimal height within $\mtcl N$, then there is a condition extending $q$ which is $\mtcl N$-symmetric and is $(M, \bbQ)$-generic for every $M\in\mtcl N$.

\begin{proposition} Given a forcing notion $\bbQ$ and cardinals $\theta_0\leq\theta_1$ such that $\theta_0\geq\omega_2$, $\bbQ\sub\cH(\theta_0)$, and such that every maximal antichain of $\bbQ$ belongs to $\cH(\theta_0)$,\footnote{Of course, since every condition in $\bbQ$ belongs to some maximal antichain of $\bbQ$, we have that $\bbQ\sub\cH(\theta_0)$.} if $\bbQ$ is symmetrically proper with respect to $\cH(\theta_0)$, then it is also symmetrically proper with respect to $\cH(\theta_1)$. \end{proposition}

\begin{proof}
If $P\sub\cH(\theta_0)$ witnesses the symmetric properness of $\bbQ$ relative to $\cH(\theta_0)$, then any predicate $P^*\sub\cH(\theta_1)$ such that $P$ and $\theta_0$ are both definable in $(\cH(\theta_1); \in, P^*)$ witnesses the symmetric properness of $\bbQ$ relative to $\cH(\theta_1)$. For this, it suffices to note that if $\mtcl N$ is a $P^*$-symmetric system, then $\mtcl N\restriction\cH(\theta_0):=\{N\cap\cH(\theta_0)\,:\,N\in\mtcl N\}$ is a $P$-symmetric system. This is enough since, by our hypothesis that every maximal antichain of $\bbQ$ belongs to $\cH(\theta_0)$, we have that for every $N\in\mtcl N$, a condition is $(N, \bbQ)$-generic if and only if it is $(N\cap\cH(\theta_0), \bbQ)$-generic.
\end{proof}

We define the property of being symmetrically proper as the strongest one of these parameterized properties (whenever this makes sense); in other words, a forcing notion $\bbQ$ is \emph{symmetrically proper} if the least cardinal $\theta$ such that $|\TC(\bbQ)|\leq 2^{{<}\theta}$ is at least $\omega_2$ and $\bbQ$ is symmetrically proper relative to $\cH(\theta)$.

Given a cardinal $\mu$, a poset $\bbQ$ is \emph{$\mu$-linked} in case there is a decomposition $\bbQ=\bigcup_{\xi<\mu}A_\xi$ such that every $A_\xi$ is linked; i.e, any two conditions in $A_\xi$ are compatible in $\bbQ$.

\begin{proposition}\label{two-step} Both the class of symmetrically proper forcings and the class of $\mu$-linked forcings, for any fixed infinite cardinal $\mu$, are closed under two-step iterations.
\end{proposition}

\begin{proof}
Let us start by proving the first assertion. Suppose $\bbP$ is a symmetrically proper forcing, let $\theta_0$ be least cardinal such that $|\TC(\bbP)|\leq 2^{{<}\theta_0}$ , and let $P_0\sub\cH(\theta_0)$ be a predicate witnessing the symmetric properness of $\bbP$. Let also $\name{\bbQ}$ be a $\bbP$-name for a symmetrically proper forcing, let $\name{\theta}_1$ be a $\bbP$-name for the least cardinal $\theta$ such that $|\TC(\name{\bbQ})|\leq 2^{{<}\theta}$, let $\name{P}_1$ be a $\bbP$-name for a predicate of $\cH(\name{\theta}_1)$ witnessing the symmetric properness of $\name\bbQ$, let $\theta_2$ be the least cardinal such that $|\TC(\bbP\ast\name{\bbQ})|\leq 2^{{<}\theta_2}$, and let us note that $\theta_2\geq\theta_0$ and $\Vdash_\bbP\theta_2\geq\name{\theta}_1$. Let $P_2\sub\cH(\theta_2)$ be such that $P_0$, $\bbP\ast\name{\bbQ}$, and the set of $(p, \name x)$, where $p\in\bbP$, $\name x\in\cH(\theta)$, and $p\Vdash_\bbP\name x\in\name{P}_1$, are all definable in $(\cH(\theta_2); \in, P_2)$.  

Let $\mtcl N$ be a finite $P_2$-symmetric system and let $(p_0, \name{q}_0)\in N_0$ for some $N_0\in\mtcl N$ of minimal height within $\mtcl N$. We will find an extension $(p^*, \name q^*)\in\bbP\ast\name{\bbQ}$ of $(p_0, \name q_0)$ which is $\mtcl N$-symmetric and $(N, \bbP\ast\name{\bbQ})$-generic for every $N\in\mtcl N$.

Since $P$ is definable in $(\cH(\theta_2); \in, P_2)$ and $\theta_2\geq\theta_0$, we may fix an extension $p^*$ of $p_0$ in $\bbP$ which is $\mtcl N$-symmetric and $(N, \bbP)$-generic for every $N\in\mtcl N$.

\begin{claim}\label{cl3}
$p^*$ forces in $\bbP$ that $\mtcl N^{\name G_\bbP}=\{N[\name G_\bbP]\,:\,N\in \mtcl N\}$ is a $\name{P}_1$-symmetric system.
\end{claim}

\begin{proof}
Thanks to the choice of $P_2$, $N[\name{G}_\bbP]\cap\cH(\name{\theta}_1)$ is forced to be an elementary submodel of $(\cH(\name{\theta}_1); \in, \name{P}_1)$ for each $N\in\mtcl N$. Hence, it suffices to prove that $p^*$ forces $N^0[\name{G}_{\bbP}]\cong N^1[\name{G}_{\bbP}]$ for all $N^0$, $N^1\in\mtcl N$ of the same height. But by essentially the same argument as in the proof of Lemma \ref{same-subsets-o1-ext}, using the fact that $p^*$ is $\mtcl N$-symmetric in $\bbP$, we can see that $p^*$ forces, for all $N^0$ and $N^1$ as above, that the function sending $\name{X}_{\name{G}_{\bbP}}$, for a $\bbP$-name $\name X\in N^0$, to $\Psi_{N^0, N^1}(\name X)_{\name{G}_{\bbP}}$ is an isomorphism between $N^0[\name{G}_{\bbP}]$ and $N^1[\name{G}_{\bbP}]$.
\end{proof}

By the claim, together with the fact that $\name{\bbQ}$ is forced to be symmetrically proper as witnessed by $\name{P}_1$, the fact that $p^*\Vdash_{\bbP}\name q\in\bbQ$, and the fact that $N_0[\name{G}_\bbP]$ is of minimal height within $\mtcl N^{\name G_\bbP}$, we may fix a $\bbP$-name $\name{q}^*$ such that $p^*$ forces $\name{q}^*$ to be a condition in $\name{\bbQ}$ extending $\name{q}_0$ which is $\mtcl N^{\name G_\bbP}$-symmetric and $(N[\name{G}_{\bbP}], \name\bbQ)$-generic for each $N\in\mtcl N$. Then $(p^*, \name{q}^*)$ is a condition in $\bbP\ast\name{\bbQ}$ extending $(p_0, \name q_0)$ which is $(N, \bbP\ast\name{Q})$-generic for each $N\in\mtcl N$, and it is easy to see that it is also $\mtcl N$-symmetric. For this, suppose $(p', \name q')\leq_{\bbP\ast\name{\bbQ}}(p^*, \name q^*)$, $N^0$, $N^1\in\mtcl N$ are such that $\delta_{N^0}=\delta_{N^1}$, and $(p, \name q)\in N^0\cap\bbP\ast\name{\bbQ}$ is weaker than $(p', \name q')$. We then have that also $p'\leq_\bbP\Psi_{N^0,N^1}(p)$ and that $p'$ forces that $\name{q}'\leq_{\name{\bbQ}}\Psi_{N^0[\name{G}_{\name{\bbQ}}], N^1[\name{G}_{\name{\bbQ}}]}(\name q)=\Psi_{N^0, N^1}(\name q)$, where the equality follows from the proof of Claim \ref{cl3}. In other words, $(p', \name{q}')\leq_{\bbP\ast\name{Q}}(\Psi_{N^0, N^1}(p), \Psi_{N^0, N^1}(\name q))=\Psi_{N^0, N^1}((p, \name q))$. This finishes the proof of the first assertion.

As to the second assertion, it suffices to observe that if $\{A_\xi\,:\,\xi<\mu\}$ is a decomposition of $\bbP$ witnessing its $\mu$-linkedness, $(\name{B}_\zeta\,:\,\zeta<\mu)$ is a sequence of $\bbP$-names, and $\bbP$ forces $\{\name{B}_\zeta\,:\,\zeta<\mu\}$ to be a decomposition of a forcing $\name{\bbQ}$ witnessing its $\mu$-linkedness, then letting $C_{\xi, \zeta}$, for $(\xi, \zeta)\in\mu\times\mu$, be the collection of conditions of the form $(p, \name q)$, where $p\in A_\xi$ and $p\Vdash_\bbP\name q\in\name{B}_\zeta$, we have that $\{C_{\xi, \zeta}\,:\,(\xi, \zeta)\in\mu\times\mu\}$ is a decomposition of $\bbP\ast\name{\bbQ}$ witnessing its $\mu$-linkedness.
\end{proof}

A sufficient, and useful, condition for a poset to be $\omega_1$-linked is that it be a P\v{r}\'{i}kr\'{y}-type partial order with stems in $\omega_1$. In general, we say that a poset $\bbQ$ is a \emph{P\v{r}\'{i}kr\'{y}-type} partial order in case there is a set $\Res(\bbQ)$ such that:
\begin{enumerate}
\item $\bbQ$ is a partial order with conditions being ordered pairs $(s, A)$ such that $A\in\Res(\bbQ)$;
\item for all $A_0$, $A_1\in \Res(\bbQ)$, $A_0\cap A_1\in\Res(\bbQ)$;
\item for every $(s, A_0)\in\bbQ$ and every $A_1\in\Res(\bbQ)$, if $A_1\sub A_0$, then $(s, A_1)$ is a condition in $\bbQ$ extending $(s, A_0)$;
\end{enumerate}

In the above situation we will sometimes refer to $s$ as the \emph{stem} of $(s, A)$ and to $A$ as its \emph{reservoir}. Given a set $X$, we will say that a P\v{r}\'{i}kr\'{y}-type partial order $\bbQ$ has \emph{stems in $X$} if for all $(s, A)\in\bbQ$, $s\in X$.

Let \emph{Local $\CH$} be the statement that every set in $\cH(\aleph_2)$ belongs to some ground model satisfying $\CH$.

In this section we will be mostly concerned with $\omega_1$-linked symmetrically proper forcing notions.\footnote{The conjunction of these two properties is of a very similar flavour to the property of having the $\aleph_2$-p.i.c. At the moment it is not clear to us what the exact relationship between our class and the class of $\aleph_2$-p.i.c.\ forcings is.} In particular, we will be interested in the following forcing axiom-like principle.

\begin{definition}
Given a cardinal $\mu$, \emph{$\omega_1$-linked symmetric $\BPFA(\mu)$ from ground models of $\CH$}, which we will also denote
$\CH$-$\omega_1$\text{-linked-Symm-}$\BPFA(\mu)$, is the conjunction of the following two statements.

\begin{enumerate}

\item Local $\CH$

\item Let $a\in \cH(\aleph_2)$ and let $\varphi(x, y)$ be a $\Sigma_0$ formula in the language of set theory. Suppose for every ground model $M$, if $a\in M$ and $M\models\CH$, then it holds in $M$ that there is an $\omega_1$-linked symmetrically proper forcing notion $\bbQ\sub\cH(\mu)^M$ such that $\bbQ$ forces $\cH(\aleph_2)\models \exists y\varphi(a, y)$. Then $\cH(\aleph_2)\models\exists y\varphi(a, y)$.
 \end{enumerate}
\end{definition}

Note that, by the first order definability of the generic multiverse (\cite{Laver}), our principle $\CH$--$\omega_1$\text{-linked-Symm-}$\BPFA(\mu)$, and hence also Local $\CH$, are first order statements in the language of set theory.

Obviously, $\CH$--$\omega_1$\text{-linked-Symm-}$\BPFA(\mu')$ implies $\CH$--$\omega_1$\text{-linked-Symm-}$\BPFA(\mu)$ for all cardinals $\mu<\mu'$.

The following is the main theorem in this section.

\begin{theorem}\label{thm2}
Assume $\GCH$. Let $\kappa\geq\aleph_2$ be a regular cardinal. Then there is an $\aleph_2$-Knaster proper partial order $\bbP$ forcing the following statements.
\begin{enumerate}
\item $2^{\aleph_0}=2^{\aleph_1}=\kappa$
\item $\PFA(\omega_1)_{{<}\kappa}$
\item $\CH$--$\omega_1$\text{-linked-Symm-}$\BPFA(\kappa)$
\end{enumerate}
\end{theorem}

In the next subsection we will prove this theorem and in Subsection \ref{applications} we will give some applications of $\CH$--$\omega_1$\text{-linked-Symm-}$\BPFA(\omega_2)$.

\subsection{Proving Theorem \ref{thm2}}\label{provingthm2}

Let us assume $\GCH$ and let $\kappa\geq\aleph_2$ be a regular cardinal. The forcing $\bbP$ witnessing Theorem \ref{thm2} will be obtained by a modification of the construction from the proof of Theorem \ref{main}, which we will also refer to as $\langle\bbP_\alpha\,:\,\alpha\leq\kappa\rangle$. In fact, we will recycle much of the notation from that proof. Also, most of the verifications of the relevant points will be the same as in the proof of Theorem \ref{main}, so we will only give details of those arguments which are new.

Let $\Fml_{\Sigma_0}(x, y)$ be the set of $\Sigma_0$ formulas in the language of set theory with free variables among $x$, $y$. There are four differences in the present construction with respect to the one from the proof of Theorem \ref{main}:

Given $\alpha<\kappa$, and assuming $\bbP_\alpha$ has been defined, we define $\mtcl U^\alpha$
 by letting
$\mtcl U^\alpha
=\emptyset$ if $\phi(\alpha)$ is not a pair of the form $(p_\alpha, \langle A^\alpha_i\,:\, i<\omega_1\rangle)$, where $p_\alpha\in \{0\}\cup\Fml_{\Sigma_0}(x, y)$ and where $\langle A^\alpha_i\,:\, i<\omega_1\rangle$ is a sequence of antichains of $\bbP_\alpha$, all of them of size $\leq\aleph_1$, and, in the other case, letting $\mtcl U^\alpha$
be defined in exactly the same way as in the proof of Theorem \ref{main}, starting of course from $\langle A^\alpha_i\,:\,i<\omega_1\rangle$.

The second difference is in the definition of $\name{\bbQ}_\alpha$. This is now a $\bbP_\alpha\restriction\,\mtcl U^\alpha$-name as follows:

\begin{itemize}
\item If $\phi(\alpha)$ is of the form $(0, \langle A^\alpha_i\,:\, i<\omega_1\rangle)$, where $\langle A^\alpha_i\,:\, i<\omega_1\rangle$ is a sequence of antichains of $\bbP_\alpha$, all of them of size $\leq\aleph_1$, such that $\bigcup_{i<\omega_1}\{i\}\times A^\alpha_i$, viewed as a nice $\bbP_\alpha\restriction\,\mtcl U^\alpha$-name for a subset of $\omega_1$, canonically encodes a forcing notion on $\omega_1^{\bold V}$ which $\bbP_\alpha\restriction\,\mtcl U^\alpha$ forces to be $(\name{G}_{\bbP_\alpha\restriction\,\mtcl U^\alpha}, \bbP_\alpha)$-proper, then $\name{\bbQ}_\alpha$ is a $\bbP_\alpha\restriction\,\mtcl U^\alpha$-name for this forcing notion.

\item If $\phi(\alpha)$ is of the form $(\varphi(x, y), \langle A^\alpha_i\,:\, i<\omega_1\rangle)$, with $\varphi(x, y)\in\Fml_{\Sigma_0}(x, y)$ and $\langle A^\alpha_i\,:\, i<\omega_1\rangle$ a sequence of antichains of $\bbP_\alpha$, all of them of size $\leq\aleph_1$, then $\name{\bbQ}_\alpha$ is the $\triangleleft$-least $\bbP_\alpha\restriction\,\mtcl U^\alpha$-name $\name{\bbQ}$ in $\cH(\kappa^+)$ which is forced to have the following property:

\begin{itemize}

\item Suppose $\name a_\alpha=\bigcup_{i<\omega_1}\{i\}\times A^\alpha_i$, viewed as a nice $\bbP_\alpha\restriction\,\mtcl U^\alpha$-name for a subset of $\omega_1$, is such that there is an $\omega_1$-linked symmetrically proper partial order $\bbQ\sub\kappa$ forcing $\exists y\varphi(y, \name a_\alpha)$. Then $\name{\bbQ}$ is such a partial order.
 \item In the other case, $\name{\bbQ}$ is trivial forcing $\{\emptyset\}$.
 \end{itemize}

\item In the remaining case, $\name{\bbQ}_\alpha$ is a $\bbP_\alpha\restriction\,\mtcl U^\alpha$-name for trivial forcing $\{\emptyset\}$.
\end{itemize}

The third difference is that now clause $(1)$ in the definition of $\bbP_\alpha$ gets replaced with the following.

\begin{enumerate}

\item[$(1)^*$] $F_p$ is a finite function with $\dom(F_p)\sub\alpha$ and such that for each $\beta\in \dom(F_p)$, $F_p(\beta)\in\kappa$.
\end{enumerate}

In the definition of the extension relation $\leq_\alpha$ we now of course require that if $p_0$, $p_1\in\bbP_\alpha$, $p_1\leq_\alpha p_0$, and $\beta\in\dom(F_{p_0})$, then $\beta\in\dom(F_{p_1})$ and $p_1\restriction\mtcl U^\beta$ forces in $\bbP_\beta\restriction\,\mtcl U^\beta$ that $F_{p_1}(\beta)\leq_{\name{\bbQ}_\beta}F_{p_0}(\beta)$.

\begin{remark}
By Lemma \ref{symm-small-proper}, every proper poset $\bbQ$ on $\omega_1$ is a dense suborder of a symmetrically proper forcing $\bbQ^*$\footnote{If $\CH$ holds, then obtain $\bbQ^*$ by simply putting $\aleph_2$-many conditions above the weakest condition of $\bbQ$.} (and it is obviously $\aleph_2$-c.c.). It does not follow from this, though, that the first two bullet points in the above specification of $\name{\bbQ}_\alpha$ could have been merged into one. The reason is that the forcings of size $\aleph_1$ we pick in our construction need not be proper in the relevant $\bbP_\alpha\restriction\,\mtcl U^\alpha$-extension, but merely forced to be $(\name{G}_{\bbP_\alpha\restriction\,\mtcl U^\alpha}, \bbP_\alpha)$-proper (cf.\ the construction for Theorem \ref{main}). Actually, the merging could have been carried out provided we had made a similar move when considering $\omega_1$-linked\ symmetrically proper forcings in the construction, in the sense of replacing symmetric properness with a similar version of this property involving only collections of extensions of models coming from suitable side conditions. However, in the interest of keeping definitions reasonably simple,\footnote{Cf.\ the above description.} we have chosen not to make this move and instead keep the first two subcases in the definition of $\name{\bbQ}_\alpha$ apart.
\end{remark}

The last difference with respect to the construction from Theorem \ref{main} is that now clause (4) in the original construction gets supplemented with the following symmetry clause.

\begin{itemize}
\item[(5)] For each $\beta\in\dom(F_p)$, $p\restriction \mtcl U^\beta$ forces in $\bbP_\beta\restriction\,\mtcl U^\beta$ that $F_p(\beta)$ is $\name{\mtcl N}^p_\beta$-symmetric (in $\name{\bbQ}_\beta$) for $$\name{\mtcl N}^p_\beta=\{N[\name{G}_{\bbP_\beta\restriction\,\mtcl U^\beta}]\,:\,(N, \beta+1)\in \Delta_p\}$$
\end{itemize}

This completes the specification of the present forcing construction.

The proofs of the corresponding versions of Lemmas \ref{compl0} and \ref{compl1} are exactly the same as the proofs of the original lemmas, and the corresponding version of Lemma \ref{cc} is also almost the same. The only difference is that, in the situation of that proof, letting $\alpha_0<\ldots <\alpha_{n-1}$ be the ordinals in $\dom(F_{p_{i_0}})\cap\dom(F_{p_{i_1}})$, which we may assume is a nonempty sequence, we recursively build a decreasing sequence $(q_i)_{i<n}$ of conditions extending $((F_{p_{i_0}}\cup F_{p_{i_1}})\restriction\alpha_0, (\Delta_{p_{i_0}}\cup\Delta_{p_{i_1}})\restriction\alpha_0)$. At each step $i$, $q_i$ is a condition in $\bbP_{\alpha_i}$ extending $q_{i-1}$ (if $i>0$) and such that $F_{q_i}(\alpha_i)$ is a name for some condition in $\name{\bbQ}_{\alpha_i}$ forced to extend both $F_{p_{i_0}}(\alpha_i)$ and $F_{p_{i_1}}(\alpha_i)$. The point is that, thanks to the fact that $\mtcl M_{i_0}\cong\mtcl M_{i_1}$, $F_{p_{i_0}}(\alpha_i)$ and  $F_{p_{i_1}}(\alpha_i)$ are forced by $q_{i-1}$ (if $i>0$) to belong to the same piece in some canonically fixed decomposition $\{A_\xi\,:\,\xi<\omega_1\}$ of $\name{\bbQ}_{\alpha_i}$ witnessing the $\omega_1$-linkedness of this poset---e.g., we can use the $\triangleleft$-least $\bbP_{\alpha_1}\restriction\,\mtcl U^{\alpha_i}$-name for such a decomposition.

The proof of the present version of Lemma \ref{symm-lem}---with the exact same statement---is immediate using condition (5) in our present definition.
The current version of Lemma \ref{same-subsets-o1-ext} is then proved also in essentially the same way; the only difference is that now we use the present counterpart of Lemma \ref{symm-lem}.

 The current version of Lemma \ref{properness} is also proved by induction on $\alpha<\kappa$.
The proof of $(1)_\alpha$ (and of $(2)_\alpha$) is the same as in the original lemma when $\alpha$ is $0$, a successor ordinal, or an ordinal of countable cofinality. When $\cf(\alpha)\geq\omega_1$, we pick $\bar\alpha$, $G$, $r$ and $q$ as in the old proof, and we fix $(\beta_i)_{i<n}$ as we did there. We then build a decreasing sequence $(q^+_i)_{i<n}$ as in that proof. In order to argue that $q^+_i$ can be taken to satisfy the counterpart of point (3) in that proof, we argue as in the first construction, using the current version of Lemma \ref{same-subsets-o1-ext} together with the definition of symmetrically proper forcing.

The proof of the current version of Lemma \ref{properness1} is essentially the same as in the original lemma.

The proof of the corresponding versions of Corollary \ref{cor-prop} and Lemmas \ref{pfaomega_1} and \ref{arithm} is the same, using what we have already established, as for the original results.

We will need the following more general form of Lemma \ref{properness1}.

\begin{lemma}\label{properness1+}
Let $\alpha<\kappa$,  $p\in\bbP_\alpha$, and let $(N_0, \alpha)$ and $(N_1, \alpha)$ be models with marker such that $N_0$, $N_1\in\mtcl E_{\alpha+1}$ and $(N_0; \in, \Phi_{\alpha+1}\cap N_0)\cong (N_1; \in, \Phi_{\alpha+1}\cap N_1)$, $\Psi_{N_0, N_1}$ is the identity on $N_0\cap N_1$, and $p\in N_0$. 
Then there is an extension $p^*\in\bbP_\alpha$ of $p$ such that $(N_i, \rho)\in \Delta_{p^*}$ for all $i < 2$ and all $\rho\in N_i\cap (\alpha+1)$.
\end{lemma}

The proof of Lemma \ref{properness1+} is by induction on $\alpha$ and very similar to that of the present version of Lemma \ref{properness1}.

The following $\CH$-preservation lemma is proved using the present version of Lemma \ref{symm-lem}.\footnote{See for example \cite{fnr} for a similar argument.}

\begin{lemma}\label{ch-pres}
For every $\alpha<\kappa$, $\bbP_\alpha\restriction\,\mtcl U^\alpha$ forces $\CH$.
\end{lemma}

\begin{proof}
Let $\name r_i$, for $i<\omega_2$, be $\bbP_\alpha\restriction\,\mtcl U^\alpha$-names for subsets of $\omega$ and suppose, towards a contradiction, that $p\in \bbP_\alpha\restriction\,\mtcl U^\alpha$ forces $\name r_i\neq \name r_{i'}$ for all $i\neq i'$. By the $\aleph_2$-c.c.\ of $\bbP_\alpha\restriction\,\mtcl U^\alpha$ we may of course assume that these names are all in $\cH(\kappa)$. For each $i$ let $M_i^*$ be a countable elementary submodel of $\cH(\kappa^+)$ containing everything relevant, which includes $p$ and $\name r_i$. Let $\mtcl M_i$ be a structure with universe $M_i:=M_i^*\cap\cH(\kappa)$ coding $\alpha$, $p$, $\name r_i$ and $\langle (i, x)\,:\, \beta\in M_i\cap\kappa, x\in \Phi_\beta\cap M_i\rangle$ in some canonical way. By $\CH$ we may find $i_0\neq i_1$ such that $\mtcl M_{i_0}\cong\mtcl M_{i_1}$ and $\Psi_{M_{i_0}, M_{i_1}}$ is the identity on $M_{i_0}\cap M_{i_1}$.

By Lemma \ref{properness1+} we may extend $p\in\bbP_\alpha$ to a condition $p^*\in\bbP_\alpha$ such that $(M_{i_0}, \rho_0)$, $(M_{i_1}, \rho_1)\in\Delta_{p^*}$ for al $\rho_0\in M_{i_0}\cap (\alpha+1)$ and $\rho_1\in M_{i_1}\cap (\alpha+1)$. By the present form of Lemma \ref{symm-lem} we have that for every $s\in (\bbP_\alpha\restriction\,\mtcl U^\alpha)\cap M_{i_0}$ and every $p'\in\bbP_\alpha$ extending $p^*$, $p'\leq_{\bbP_\alpha\restriction\,\mtcl U^\alpha} s$ if and only if $p'\leq_{\bbP_\alpha\restriction\,\mtcl U^\alpha}\Psi_{M_{i_0}, M_{i_1}}(s)$.

Finally, by the present version of Lemma \ref{properness}, for every condition $p'\in\bbP_\alpha$ extending $p^*$ and every $n<\omega$, if $p'$ extends a condition deciding the truth value of the statement $n\in \name r_{i_0}$, then $p'$ can be extended to a condition stronger than some $s\in (\bbP_\alpha\restriction\,\mtcl U^\alpha)\cap M_{i_0}$ deciding this statement (of course in the same way). Hence, by the previous paragraph and the fact that $$\Psi_{M_{i_0}, M_{i_1}}:(M_{i_0}; \in, \Phi_{\alpha+1}, \name r_{i_0})\rightarrow (M_{i_1}; \in, \Phi_{\alpha+1}, \name r_{i_1})$$ is an isomorphism sending $\name r_{i_0}$ to $\name r_{i_1}$, we get that $p^*\Vdash_{\bbP_\alpha\restriction\,\mtcl U^\alpha}\name r_{i_0}=\Psi_{M_{i_0}, M_{i_1}}(\name r_{i_0})=\name r_{i_1}$. This is a contradiction since $p^*$ extends $p$.
\end{proof}

The following is an immediate consequence of Lemma \ref{ch-pres}, together with the $\aleph_2$-c.c.\ of $\bbP_\kappa$ and our choice of book-keeping.

\begin{corollary}\label{local-ch}
$\bbP_\kappa$ forces Local $\CH$.
\end{corollary}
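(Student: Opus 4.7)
The plan is to show that in any $\bbP_\kappa$-generic extension $\bold V[G]$, every element of $\cH(\aleph_2)^{\bold V[G]}$ already lies in an intermediate ground model obtained by restricting to some $\bbP_\alpha\restriction\mtcl U_{\vec A}$ that satisfies $\CH$. Since every element of $\cH(\aleph_2)^{\bold V[G]}$ is coded by a subset of $\omega_1$, it suffices to handle an arbitrary $a\subseteq\omega_1$ in $\bold V[G]$.

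First I would invoke the $\aleph_2$-c.c.\ of $\bbP_\kappa$ (the current version of Lemma \ref{cc}) to fix a nice $\bbP_\kappa$-name $\name a=\bigcup_{i<\omega_1}\{i\}\times A_i$ for $a$, where each $A_i$ is an antichain of $\bbP_\kappa$ of size at most $\aleph_1$. Since $\bbP_\kappa=\bigcup_{\alpha<\kappa}\bbP_\alpha$, $\kappa$ is regular, and $|\bigcup_i A_i|\leq\aleph_1<\kappa$, there is some $\alpha<\kappa$ with $\bigcup_i A_i\subseteq\bbP_\alpha$. Setting $\vec A=\langle A_i:i<\omega_1\rangle$, the very definition of $\overline{\mtcl U}_{\vec A}\subseteq\mtcl U_{\vec A}$ guarantees that every $p\in\bigcup_i A_i$ already lies in $\bbP_\alpha\restriction\mtcl U_{\vec A}$, so $\name a$ can be regarded as a $\bbP_\alpha\restriction\mtcl U_{\vec A}$-name for a subset of $\omega_1$.

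Next, Lemma \ref{compl1+} tells us that $p\mapsto p\restriction\mtcl U_{\vec A}$ projects $\bbP_\kappa$ onto $\bbP_\alpha\restriction\mtcl U_{\vec A}$; in particular $\bbP_\alpha\restriction\mtcl U_{\vec A}\lessdot\bbP_\kappa$. Consequently $G':=G\cap(\bbP_\alpha\restriction\mtcl U_{\vec A})$ is $\bbP_\alpha\restriction\mtcl U_{\vec A}$-generic over $\bold V$, the model $\bold V[G']$ is a ground model of $\bold V[G]$, and $a=\name a_{G'}\in\bold V[G']$. Finally, Lemma \ref{ch-pres} yields $\bold V[G']\models\CH$, so $a$ belongs to a ground model of $\CH$; since $a$ was arbitrary, Local $\CH$ holds in $\bold V[G]$.

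The only mildly delicate step is the bookkeeping observation that $\name a$ really is a $\bbP_\alpha\restriction\mtcl U_{\vec A}$-name, i.e., that the conditions in each $A_i$ lie in $\bbP_\alpha\restriction\mtcl U_{\vec A}$; but this is essentially tautological from how $\overline{\mtcl U}_{\vec A}$ is built (it is designed precisely to capture the domains of the working parts and the markers appearing in the antichains $A_i$). Everything else is a two-line combination of the $\aleph_2$-c.c.\ with Lemmas \ref{compl1+} and \ref{ch-pres}.
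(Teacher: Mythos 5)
Your argument is correct and is exactly the paper's proof: the paper derives this corollary in one line as "an immediate consequence of Lemmas \ref{compl1+} and \ref{ch-pres} together with the $\aleph_2$-c.c.\ of $\bbP_\kappa$," and your write-up is simply the expansion of that line (code $a$ by a subset of $\omega_1$, pick a nice name via the $\aleph_2$-c.c., pass to $\bbP_\alpha\restriction\mtcl U_{\vec A}$ as a complete suborder via the projection, and apply the $\CH$-preservation lemma). The one point you flag as "mildly delicate" --- that the name built from $\vec A$ can be read as a $\bbP_\alpha\restriction\mtcl U_{\vec A}$-name --- is treated with exactly the same level of informality in the paper itself (e.g.\ in the definition of $\name{\bbQ}_\alpha$ and in Lemma \ref{pfaomega_1}), so no divergence there.
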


\begin{lemma}\label{prbpfa}
$\bbP_\kappa$ forces $\CH$--$\omega_1$\text{-linked-Symm-}$\BPFA(\kappa)$.
\end{lemma}

\begin{proof}
Let $G$ be $\bbP_\kappa$-generic over $\bold V$, let $a\in\cH(\aleph_2)^{\bold V[G]}$, let $\varphi(x, y)$ be a $\Sigma_0$ formula, and suppose for every ground model $M$ of $\bold V[G]$ such that $a\in M$ and $M\models\CH$ it holds in $M$ that there is an $\omega_1$-linked symmetrically proper forcing $\bbQ\sub\cH(\kappa)^M$ such that $\bbQ$ forces $\cH(\aleph_2)\models \exists y\varphi(a, y)$. By Corollary \ref{local-ch}, it will be enough to show that there is some $b\in \bold V[G]$ such that $\bold V[G]\models \varphi(a, b)$.

Let $\name a$ be a nice $\bbP_\kappa$-name for a subset of $\omega_1$ coding $a$ and let $\vec A=\langle A_i\,:\,i<\omega\rangle$ be a sequence of antichains of $\bbP_\kappa$ such that   $\name a=\bigcup_{i<\omega_1}\{i\}\times A_i$.  Using the $\aleph_2$-c.c.\ of $\bbP_\kappa$ and the choice of $\phi$, we may find $\alpha$ such that $\phi(\alpha)=(\varphi(x, y), \langle A_i\,:\, i<\omega_1\rangle)$. Let $G_0=G\cap\bbP_\alpha\restriction\,\mtcl U^\alpha$ and let $M=\bold V[G_0]$. Then $a\in M$ and, by Lemma \ref{local-ch}, $M\models\CH$. By our hypothesis there is in $M$ an $\omega_1$-linked symmetrically proper forcing $\bbQ\sub \cH(\kappa)^M$ adding a witness to $\exists y\varphi(a, y)$. It is easy to see that $(2^{{<}\kappa})^M=\kappa$. Hence, in $M$ there is in fact an $\omega_1$-linked symmetrically proper forcing $\bbQ\sub \kappa$ adding a witness to $\exists y\varphi(a, y)$. This concludes the proof since then some condition in $G_0$ will force $\name{\bbQ}_\alpha$ to be such a forcing, from which it will follow that $\bold V[G]\models \exists y\varphi(y, a)$ as desired.
\end{proof}

\begin{remark}
We do not know whether $\CH$--$\omega_1$\text{-linked-Symm-}$\BPFA(\mu)$, for any given $\mu$, implies the unbounded form of this axiom, i.e., $\CH$--$\omega_1$\text{-linked-Symm-}$\BPFA(\lambda)$ for every cardinal $\lambda$. We do not even know if our model satisfies this unbounded form.
\end{remark}

Lemma \ref{prbpfa} completes the proof of Theorem \ref{thm2}.

We have the following implication.

\begin{proposition}\label{princ-impl-MA} $\CH$--$\omega_1$\text{-linked-Symm-}$\BPFA(\omega_1)$ implies $\MA_{\omega_1}$.\end{proposition}

\begin{proof}
It is enough to show that if $\bbP$ is a poset on $\omega_1$ with the c.c.c.\ and $\mtcl D$ is a collection of $\aleph_1$-many dense subsets of $\bbP$, then there is a filter of $\bbP$ meeting all members of $\mtcl D$. Let $M$ be any ground model of $\CH$ containing $\bbP$ and $\mtcl D$ and computing $\omega_1$ correctly---say, containing in addition some fixed sequence $(e_\alpha\,:\,\alpha<\omega_1)$ of bijections $e_\alpha:|\alpha|\to\alpha$. Then it holds in $M$ that $\bbP$ has the c.c.c.\ as every uncountable antichain of $\bbP$ from the point of view of $M$ would be uncountable in $\bold{V}$. But then $\bbP$ is of course  $\omega_1$-linked and symmetrically proper in $M$, and an application of $\CH$--$\omega_1$\text{-linked-Symm-}$\BPFA(\omega_1)$ to $M$ and $\bbP$ yields the existence in $\bold{V}$ of a filter of $\bbP$ meeting all members of $\mtcl D$. 
\end{proof}

On the other hand, it is unknown to us whether $\PFA$ implies our principle $\CH$--$\omega_1$\text{-linked-Symm-}$\BPFA(\omega_1)$, or any reasonable variant of it. We suspect that it does not. In fact,  $\CH$--$\omega_1$\text{-linked-Symm-}$\BPFA(\omega_1)$ and natural variants thereof seem to be orthogonal to ordinary forcing axioms stronger than $\MA_{\omega_1}$ (s.\ the comment after Question \ref{q213}).

The following is another natural question.

\begin{question}\label{q213} One can naturally define the strengthening $\CH$--$\aleph_2$\text{-c.c.-Symm-}$\BPFA(\omega_2)$ of our principle  $\CH$--$\omega_1$\text{-linked-Symm-}$\BPFA(\omega_2)$. Is this principle consistent?
\end{question}

It is also worth pointing out that our proof of Proposition \ref{princ-impl-MA} does not suffice to derive $\PFA(\omega_1)$ from $\CH$--$\omega_1$\text{-linked-Symm-}$\BPFA(\omega_1)$. Indeed, there is a priori no reason why, given a proper $\bbP\sub\omega_1$ and a ground model $M$ satisfying $\CH$ and such that $\bbP\in M$ (and possibly $a\in M$ for some other specified $a\in\cH(\omega_2)$), $\bbP$ should also be proper in $M$ (stationary sets in $M$ of the form $[X]^{\aleph_0}$ need not be stationary in $\bold{V}$). The following is thus another natural question.

\begin{question} Does $\CH$--$\omega_1$\text{-linked-Symm-}$\BPFA(\omega_1)$ impliy $\PFA(\omega_1)$?
\end{question}

\subsection{Some applications of $\CH$--$\omega_1$\text{-linked-Symm-}$\BPFA(\omega_2)$}\label{applications}

We will finish this section by showing that $\CH$--$\omega_1$\text{-linked-Symm-}$\BPFA(\omega_2)$ implies a number of interesting combinatorial consequences of $\PFA$. We will focus on Baumgartner's Axiom for $\aleph_1$-dense sets of reals, Todor\v{c}evi\'{c}'s  Open Colouring Axiom for sets of size $\aleph_1$, the Abraham-Rubin-Shelah Open Colouring Axiom, Moore's Measuring principle, Baumgartner's Thinning-Out Principle, and Todor\v{c}evi\'{c}'s P-ideal Dichotomy for $\aleph_1$-generated ideals on $\omega_1$. Hence, by Theorem \ref{thm2}, all these statements are simultaneously compatible with $2^{\aleph_0}>\aleph_2$.

A set $A$ of reals is $\aleph_1$-dense if $A\cap (x,\, y)$ has cardinality $\aleph_1$ for all reals $x<y$, where $(x,\,y)$ denotes the open interval $\{z\in\mathbb R\,:\, x<z<y\}$.
\emph{Baumgartner's axiom for $\aleph_1$-dense sets of reals}, $\BA$, is the statement that all $\aleph_1$-dense sets of reals are order-isomorphic.

By a well-known theorem of Baumgartner (\cite{baumgartner0}), $\BA$ can be forced by a proper forcing. In fact, the following holds.

\begin{lemma}\label{baumgartner} (\cite{baumgartner0}) Assume $\CH$ holds and suppose $A$ and $B$ are $\aleph_1$-dense sets of reals. Then there is a c.c.c.\ partial order $\bbQ$ of cardinality $\aleph_1$ adding an order-isomorphism $\pi:A\to B$. \end{lemma}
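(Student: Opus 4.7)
The plan is to build a c.c.c.\ forcing $\bbQ$ of cardinality $\aleph_1$ whose generic filter codes an order-isomorphism $\pi:A\to B$. The first idea is to let a condition be simply a finite order-preserving partial function $p\subset A\times B$, ordered by reverse inclusion. Under $\CH$ this poset has size $\aleph_1$, and the standard density arguments (using that $A$ and $B$ are $\aleph_1$-dense, so have no endpoints and are dense in themselves) make every $a\in A$ eventually enter $\dom(p)$ and every $b\in B$ eventually enter $\rng(p)$ along the generic, so the union of the generic is automatically an order-isomorphism.

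The only real obstacle is the c.c.c., and the naive poset above fails it: two conditions sharing a common frame $\{(a_i,b_i)\,:\,i\leq n\}$ but adding new pairs in the same ``slot'' $(a_i,a_{i+1})\times (b_i,b_{i+1})$, say $(c,d)$ and $(c',d')$, are compatible only if $c<c'\Leftrightarrow d<d'$, and this can certainly fail.

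To remedy this I would follow Baumgartner's refinement and enrich each condition with promise data: for every slot between consecutive matched pairs, a finite ``template'' specifying open sub-intervals of $A$ and of $B$, together with a prescribed pairing of these sub-intervals, that any future extension of the condition must respect. This equips each condition with a well-defined finite isomorphism type consisting only of combinatorial data, and under $\CH$ there are still only $\aleph_1$ types. Given $\aleph_1$ conditions, I would apply the usual $\Delta$-system lemma to their finite supports, then thin out to $\aleph_1$-many conditions sharing a common root and whose tails above the root all carry the same isomorphism type, with templates matched by the canonical identification. Two such conditions are then amalgamated by interleaving their ``new'' points inside corresponding template intervals, using the $\aleph_1$-density of $A$ and $B$ to choose the interleavings in a mutually order-consistent way; the pre-committed templates are precisely what guarantee that such a choice is always possible.

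The step I expect to be the main obstacle is exactly this amalgamation: finding the right notion of condition, specifically the right notion of ``isomorphism type'' and ``template'', so that any two conditions of the same type over a common root can be meshed using $\aleph_1$-density. Once c.c.c.\ is in hand, the genericity verification is routine via the dense sets $D_a=\{p\,:\,a\in\dom(p)\}$ for $a\in A$ and $D^b=\{p\,:\,b\in\rng(p)\}$ for $b\in B$, which are dense by a straightforward extension argument using $\aleph_1$-density on the other side.
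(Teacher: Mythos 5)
The paper does not prove this lemma---it is quoted from Baumgartner's 1973 paper \cite{baumgartner0}---so the comparison is with Baumgartner's original argument, which is substantially more delicate than your sketch. You correctly identify that the naive poset of finite order-preserving maps can fail the c.c.c.\ and that the remedy must involve extra structure on conditions together with a $\Delta$-system and counting argument under $\CH$. But the step you defer (``finding the right notion of condition \ldots\ so that any two conditions of the same type over a common root can be meshed'') is the entire content of the theorem, so what you have is a plan rather than a proof.

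Two places where the plan, as written, breaks. First, the counting: you assert there are $\aleph_1$ types and then propose to thin $\aleph_1$ conditions to $\aleph_1$ of the same type. With $\aleph_1$ types and $\aleph_1$ conditions, pigeonhole gives nothing---each type may be realized exactly once. For the thinning to work the types over a fixed countable root-structure must be \emph{countable} in number; but templates coarse enough to be countably many (say, rational intervals) carry too little information to force amalgamability. Second, and more seriously, decorating conditions with forward-looking promises cannot by itself repair a c.c.c.\ failure of the underlying map poset: if $h$ is an order-\emph{reversing} bijection from an uncountable subset of $A$ onto an uncountable subset of $B$ (for a concrete instance take $B=-A$ and $h(a)=-a$), then the singleton conditions $\{(a,h(a))\}$ are pairwise incompatible in any poset in which compatibility entails that the union of the map-parts is order-preserving, no matter what templates they carry. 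Hence the extra structure must restrict \emph{which finite maps are admitted as conditions at all}---which matchings $a\mapsto b$ are ever permitted---and the real work, absent from your sketch, is to impose such a restriction while keeping the sets $D_a$ and $D^b$ dense. Baumgartner's proof does exactly this, using $\CH$ in an essential way to organize $A$ and $B$ into countable dense pieces that all conditions must respect; the argument is genuinely involved and should be taken from \cite{baumgartner0} (or the exposition in \cite{baumgartner}) rather than reconstructed from the shape of the statement.
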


We will also mention that \cite{ARS} proves the consistency of $\BA$ with $2^{\aleph_0}>\aleph_2$.

Given a set $X$, $\Id_X$, the identity on $X$, is $\{(x, x)\,:\, x\in X\}$.  A colouring of a set of reals $X$ is a partition $(K_0, K_1)$ of $X\setminus \Id_X$ such that  for all distinct $x$, $y\in X$, $(x, y)\in K_0$ if and only if $(y, x)\in K_0$. We say that $(K_0, K_1)$ is open if $K_0$ is an open subset of $X\setminus \Id_X$ with the product topology.

Given a colouring $(K_0, K_1)$ of $X$ and $i\in\{0, 1\}$, we say that $H\subseteq X$ is $i$-homogeneous if $(x, y)\in K_i$ for all distinct $x$, $y\in H$.

Todor\v{c}evi\'{c}'s Open Colouring Axiom (\cite{To1}), which we will denote by $\OCA$, is the statement that if $(K_0, K_1)$ is an open colouring of a set $X$ of reals, then one of the following holds:

\begin{enumerate}
\item There is an uncountable $0$-homogeneous subset of $X$.
\item There is a sequence $(X_n)_{n<\omega}$ such that $X=\bigcup_{n<\omega} X_n$ and each $X_n$ is $1$-homogeneous.
\end{enumerate}

$\OCA$ follows from $\PFA$ (\cite{To1}). In fact we have the following.

\begin{lemma}\label{velickovic} (\cite{velickovic1})
Let $X$ be a set of reals and suppose $(K_0, K_1)$ is an open colouring of $X$. Suppose $K_0$ is not a union of ${<}2^{\aleph_0}$-many $1$-homogeneous sets. Then there is $Y\subseteq X$ of size $2^{\aleph_0}$ such that the poset of finite $0$-homogeneous subsets of $Y$, ordered by reverse inclusion, has the $2^{\aleph_0}$-chain condition.
\end{lemma}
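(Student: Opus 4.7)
My plan is to argue contrapositively: assuming that for every $Y \sub X$ of size $2^{\aleph_0}$ the poset $\mtcl P_Y$ of finite $0$-homogeneous subsets of $Y$ (ordered by reverse inclusion) fails to have the $2^{\aleph_0}$-chain condition, I will derive a cover of $X$ by fewer than $2^{\aleph_0}$-many $1$-homogeneous sets, contradicting the hypothesis. The core reduction is an extraction lemma: if $Y \sub X$ has size $2^{\aleph_0}$ and $\mtcl P_Y$ admits an antichain of size $2^{\aleph_0}$, then $Y$ contains a $1$-homogeneous subset of size $2^{\aleph_0}$.

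To prove the extraction lemma, I would fix such an antichain $\{F_\xi : \xi < 2^{\aleph_0}\}$ and apply a $\Delta$-system lemma---in a Shelah-style form if $2^{\aleph_0}$ is singular, or after first stratifying by a regular cofinal cardinal $\lambda \leq 2^{\aleph_0}$---to pass to a subfamily of size $2^{\aleph_0}$ whose members form a $\Delta$-system with root $R$ and constant tail length $n$, with coherent enumerations $F_\xi \setminus R = \{x_\xi^0, \ldots, x_\xi^{n-1}\}$. Since $R$ is $0$-homogeneous (being a subset of each $F_\xi$), the pairwise incompatibility of the $F_\xi$'s in $\mtcl P_Y$ forces that for each pair $\xi \neq \eta$ there are indices $i, j < n$ with $(x_\xi^i, x_\eta^j) \in K_1$. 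Pigeonholing on the $n^2$ index pairs, and then on a countable basis for the topology of $X$ (using openness of $K_0$ to fix basic open sets witnessing the $K_1$-colouring in a uniform way), yields a further subfamily of size $2^{\aleph_0}$ from which, after a Ramsey-style selection, one reads off a genuine $1$-homogeneous set of size $2^{\aleph_0}$.

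Given the extraction lemma, I would conclude by iteratively peeling off large $1$-homogeneous pieces from $X$: set $X^{(0)} = X$, and at stage $\alpha$ apply the extraction lemma to the residual $X^{(\alpha)} = X \setminus \bigcup_{\beta < \alpha} H_\beta$ provided it still has size $2^{\aleph_0}$, obtaining a $1$-homogeneous $H_\alpha$ of size $2^{\aleph_0}$ to add to the decomposition. A bookkeeping argument---organised around a maximal pairwise disjoint family of $1$-homogeneous subsets of $X$ of size $2^{\aleph_0}$, together with cardinal-arithmetic constraints coming from the failure hypothesis (in particular, the fact that assuming the failure, no union of fewer than $2^{\aleph_0}$-many such pieces can exhaust $X$ despite each piece having size $2^{\aleph_0}$)---then forces the process to terminate before $2^{\aleph_0}$ stages have passed, with residual of size strictly less than $2^{\aleph_0}$; combining the pieces with singletons of the residual yields the desired small cover.

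The main obstacle I expect is the extraction lemma itself: combining the $\Delta$-system combinatorics with the countable-basis refinement, and arranging the uniform $K_1$-witnesses so that the resulting set is a genuine $1$-clique (rather than merely satisfying a bipartite $K_1$-relation between two coordinate sequences $\{x_\xi^{i_0}\}_\xi$ and $\{x_\eta^{j_0}\}_\eta$), requires careful symmetrization---especially when $i_0 \neq j_0$ and when $2^{\aleph_0}$ is singular, where a single uncountable Ramsey pigeonhole is unavailable in ZFC and one must work coordinate by coordinate. A secondary issue is the bookkeeping needed in the recursion step to push the cover count strictly below $2^{\aleph_0}$ rather than obtain exactly $2^{\aleph_0}$ pieces from a naive peeling.
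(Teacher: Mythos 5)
The paper does not actually prove this lemma; it quotes it from Veli\v{c}kovi\'{c}'s \emph{Applications of the Open Coloring Axiom}, so I am measuring your proposal against the argument there. Your strategy (contrapositive via an ``extraction lemma'' plus peeling) is not that argument, and both of its main steps have genuine gaps. Concerning the extraction lemma: the $\Delta$-system/envelope analysis is fine up to the point where it shows that, after stabilisation, incompatibility of $F_\xi$ and $F_\eta$ is always witnessed on \emph{matching} coordinates, i.e.\ for each $\xi\neq\eta$ there is some $i<n$ with $(x^i_\xi,x^i_\eta)\in K_1$ (for $i\neq j$ the pairs land in $U_i\times U_j\subseteq K_0$). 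But to extract a $1$-homogeneous subset of $Y$ of size $2^{\aleph_0}$ for the \emph{original} colouring you must find one coordinate $i$ and a subfamily of size $2^{\aleph_0}$ on which that same coordinate always witnesses $K_1$; colouring pairs $\{\xi,\eta\}$ by a witnessing coordinate, this is exactly the partition relation $2^{\aleph_0}\rightarrow(2^{\aleph_0})^2_n$, which fails in ZFC already for two colours and target $\aleph_1$ by Sierpi\'{n}ski's colouring. This has nothing to do with singularity of $2^{\aleph_0}$, and no symmetrisation or coordinatewise processing recovers it: what you genuinely obtain is a large set homogeneous for the \emph{derived} colouring on $n$-tuples, not for $(K_0,K_1)$. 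The extraction lemma is moreover false in general: already under $\CH$, gap- and oscillation-type open colourings produce sets $Y$ with uncountable antichains in the poset of finite $0$-homogeneous subsets of $Y$ but with no uncountable $1$-homogeneous subset. This is precisely why the lemma is formulated as the existence of \emph{some} $Y$ whose poset is $2^{\aleph_0}$-cc, rather than as a dichotomy about antichains.

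Even granting the extraction lemma, the peeling argument does not close. Removing pairwise disjoint $1$-homogeneous sets of size $2^{\aleph_0}$ can continue for exactly $2^{\aleph_0}$ stages (for instance if $X$ splits into $2^{\aleph_0}$ disjoint $1$-homogeneous pieces of full size), and a cover by $2^{\aleph_0}$ many $1$-homogeneous sets contradicts nothing: the hypothesis only excludes covers by \emph{fewer than} $2^{\aleph_0}$ many such sets, which need not be large. Nothing in the ``maximal disjoint family plus cardinal arithmetic'' sketch forces termination before stage $2^{\aleph_0}$. The actual proof runs in the opposite direction and never extracts homogeneous sets from antichains: one fixes a filtration by elementary submodels $M_\alpha$ of size $<2^{\aleph_0}$, uses the hypothesis on $X$ at each stage to choose $y_\alpha\in X$ outside every $1$-homogeneous set belonging to $M_\alpha$, sets $Y=\{y_\alpha\,:\,\alpha<2^{\aleph_0}\}$, and then verifies the $2^{\aleph_0}$-chain condition of the poset directly: given $2^{\aleph_0}$ conditions, after the same $\Delta$-system and envelope stabilisation, the genericity of the $y_\alpha$'s (they avoid every $1$-homogeneous set definable from a proper initial segment of the putative antichain) yields two compatible conditions. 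I recommend rebuilding the proof along those lines.
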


$\OCA(\aleph_1)$ will denote the restriction of $\OCA$ to colourings of sets $X\subseteq\mathbb R$ of cardinality $\aleph_1$. $\OCA(\aleph_1)$ is a useful fragment of $\OCA$; for example, it is easy to see that $\OCA(\aleph_1)$ implies that every uncountable subset of $\mathcal P(\omega)$ contains an uncountable chain or an uncountable antichain under inclusion (s.\ \cite{velickovic1}). Farah proved that $\OCA(\aleph_1)$ is consistent together with $2^{\aleph_0}$ large.

The Abraham-Rubin-Shelah Open Colouring Axiom, introduced  in \cite{ARS}, which we will denote by $\OCA_{[\text{ARS}]}$, is the statement that if $X$ is a second countable Hausdorff topological space of size $\aleph_1$ and $(K_0, K_1)$ is an open colouring of $X$, then there exists a partition $X=\bigcup_{n<\omega}X_n$ of $X$ such that each $X_n$ is homogeneous. In \cite{ARS}, it is shown that $\OCA_{[\text{ARS}]}$ implies the failure of $\CH$ and is consistent with $2^{\aleph_0}=\aleph_2$. The question whether $\OCA_{[\text{ARS}]}$ is consistent with
large values of the continuum was asked in that paper.

Given a space $X$ as above, an open partition $(K_0, K_1)$ of $X$, and a function $f: X \rightarrow \{0, 1\}$, let $\bbQ(K_0, K_1, f)$
be the following forcing notion. A condition $p$ is in $\bbQ(K_0, K_1, f)$ if and only if
\begin{enumerate}
\item $p$ is a finite partial function from $\omega$ into $[X]^{<\omega}$,
\item for all $n \in \dom(p),$ $f \restriction p(n)$ is constant, say with value $i$, and
\item $p(n)$ is $i$-homogeneous.
\end{enumerate}
The ordering is the natural one: $q \leq_{\bbQ(K_0, K_1, f)} p$ iff
\begin{enumerate}
\item $\dom(q) \supseteq \dom(p)$;
\item for all $n \in \dom(p),$ $q(n) \supseteq p(n)$.
\end{enumerate}

It is clear that any forcing of the form $\bbQ(K_0, K_1, f)$ adds a partition $X=\bigcup_{n<\omega}X_n$ of $X$ into homogeneous pieces for $(K_0, K_1)$.

\begin{lemma}
\label{abraham}
(\cite{ARS}) Assume $\CH$. Suppose $X$ is a second countable Hausdorff topological space of size $\aleph_1$ and $(K_0, K_1)$ is an open colouring of $X$. Then,  for some function $f:X \rightarrow \{0, 1\}$, the forcing $\bbQ(K_0, K_1, f)$ is c.c.c.
\end{lemma}

We note that the consistency of $\OCA_{[\text{ARS}]}$ with $2^{\aleph_0}=\aleph_3$ was recently proved by Gilton-Neeman in \cite{gilton2}, but the consistency of this statement with $2^{\aleph_0}>\aleph_3$ remained open. It follows by our results, together with Lemma \ref{abraham}, that $\OCA_{[\text{ARS}]}$ is compatible with arbitrarily large continuum, which answers the Gilton-Neeman question.

In \cite{moore}, Moore proves that the conjunction of Todor\v{c}evi\'{c}'s $\OCA$ and $\OCA_{[\text{ARS}]}$ implies $2^{\aleph_0}=\aleph_2$. It follows that in our model for Theorem \ref{thm2} for $\kappa>\aleph_2$, $\OCA$ fails and hence there is an open colouring $(K_0, K_1)$ of a set $X$ of reals with the following incompactness property: there is no uncountable $0$-homogeneous subset of $X$, every $Y\in [X]^{\aleph_1}$ is a union of countably many $1$-homogeneous sets, but $X$ itself is not.

Measuring, defined by Moore (s.\ \cite{EMM}), is the statement that for every club-sequence $\vec C=\langle C_\delta\,:\,\delta\in \omega_1\rangle$\footnote{I.e., each $C_\delta$ is a club of $\delta$.} there is a club $C\subseteq\omega_1$ with the property that for every $\delta\in C$ there is some $\alpha<\delta$ such that either

\begin{itemize}
\item $(C \cap\delta)\setminus\alpha\subseteq C_\delta$, or
\item $(C\setminus\alpha)\cap C_\delta=\emptyset$.
\end{itemize}

In the above situation, we say that $C$ measures $\vec C$.

Measuring follows from $\PFA$ and can be regarded as a strong failure of Club Guessing at $\omega_1$; in fact it is easy to see that it implies $\lnot\WCG$.

Given a club-sequence $\vec C=\langle C_\delta\,:\,\delta\in \omega_1\rangle$, there is a natural proper forcing $\bbQ_{\vec C}$ for adding a club of $\omega_1$ measuring $\vec C$  (s.\ \cite{EMM}). Conditions in $\bbQ_{\vec C}$ are pairs $(x, C)$ such that:

\begin{enumerate}
\item $x$ is a closed countable subset of $\omega_1$;
\item for every $\delta\in\Lim(\omega_1)\cap x$ there is some $\alpha<\delta$ such that either
\begin{itemize}
\item $(x \cap\delta)\setminus\alpha\subseteq C_\delta$, or
\item $(x\setminus\alpha)\cap C_\delta=\emptyset$.
\end{itemize}
\item $C$ is a club of $\omega_1$.
\end{enumerate}

Given $\bbQ_{\vec C}$-conditions $(x_0, C_0)$, $(x_1, C_1)$, we let $(x_1, C_1)\leq_{\bbQ_{\vec C}}(x_0, C_0)$ iff
\begin{enumerate}
\item $x_1$ is an end-extension of $x_0$ (i.e., $x_0\subseteq x_1$ and $x_1\cap (\max(x_0)+1)=x_0$),
\item $C_1\subseteq C_0$, and
\item $x_1\setminus x_0\subseteq C_0$.
\end{enumerate}

\begin{lemma}\label{measuring} $\bbQ_{\vec C}\sub\cH(\omega_2)$ is a symmetrically proper P\v{r}\'{i}kr\'{y}-type forcing notion with stems in $\cH(\omega_1)$. \end{lemma}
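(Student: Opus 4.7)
The plan is to verify the two conjuncts of the lemma in turn; the P\v{r}\'{i}kr\'{y}-type structure (``the second assertion'' in the authors' phrasing) is essentially by inspection, whereas properness is a standard fact for club-measuring forcings.

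First I will handle the P\v{r}\'{i}kr\'{y}-type structure by taking $\Res(\bbQ_{\vec C})$ to be the family of all clubs of $\omega_1$. Clause (1) in the definition of P\v{r}\'{i}kr\'{y}-type partial order is built into the very definition of $\bbQ_{\vec C}$; clause (2) is the standard fact that the intersection of two clubs of $\omega_1$ is a club; and clause (3) is immediate because the validity requirements for being a $\bbQ_{\vec C}$-condition involve only the stem $x$, and the extension clauses trivialise when the stem is fixed ($x$ end-extends itself and $x\setminus x=\emptyset$ is a subset of any club). Since each stem is a closed countable subset of $\omega_1$, the stems all lie in $\cH(\aleph_1)$.

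Next I will verify properness. Fix a sufficiently large regular $\theta$, a countable $N^*\prec\cH(\theta)$ containing $\vec C$, $\bbQ_{\vec C}$, and a given condition $(x_0,C_0)$ to be extended, and set $\delta=N^*\cap\omega_1$. The strategy is to enumerate the dense open subsets of $\bbQ_{\vec C}$ in $N^*$ as $\langle D_n:n<\omega\rangle$, recursively build a descending sequence $(x_n,C_n)\in N^*\cap\bbQ_{\vec C}$ with $(x_{n+1},C_{n+1})\in D_n$ and $\sup_n\max(x_n)=\delta$, and then form the candidate $(x^*,C^*)$ with $x^*=\bigcup_n x_n\cup\{\delta\}$ and $C^*=\bigcap_n C_n$. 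Here $C^*$ is a club (countable intersection of clubs) and $\delta\in C_n$ for every $n$ since $C_n\in N^*$ is a club of $\omega_1$ and $\delta=\sup(N^*\cap\omega_1)$. Provided $(x^*,C^*)$ is a legitimate condition, it extends every $(x_n,C_n)$, and so it is $(N^*,\bbQ_{\vec C})$-generic.

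The main obstacle is showing that $(x^*,C^*)$ is indeed a condition, which reduces entirely to verifying the measuring clause at the new top point $\delta$; this is delicate because $C_\delta$ is in general not a member of $N^*$. The standard trick is a dichotomy argument. In the first case there exist $(x,C)\leq(x_0,C_0)$ in $N^*$ and $\alpha<\delta$ with the property that no $N^*$-extension of $(x,C)$ adjoins a point of $C_\delta$ above $\alpha$; starting the recursion from $(x,C)$ then produces $x^*\cap[\alpha,\delta)\cap C_\delta=\emptyset$, giving the second alternative of the measuring clause. In the opposite case, at each step one exploits the fact that $C_n\cap\delta$ is a club of $\delta$, and hence $C_n\cap C_\delta$ is cofinal in $\delta$, to extend $(x_n,C_n)$ within $N^*$ into $D_n$ while adjoining only points of $C_\delta$ above $\max(x_n)$; this yields $x^*\setminus\max(x_0)\subseteq C_\delta$, giving the first alternative. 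Either way the measuring clause at $\delta$ holds, so $(x^*,C^*)\in\bbQ_{\vec C}$ and the proof is complete.
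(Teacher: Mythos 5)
Your verification of the P\v{r}\'{i}kr\'{y}-type structure (with $\Res(\bbQ_{\vec C})$ the clubs of $\omega_1$) is correct and is exactly what the paper treats as immediate from the definition; the paper itself offers no proof of properness, simply citing it as a standard fact from the literature. Your overall skeleton for properness is also the right one: build a decreasing $N^*$-chain through the dense sets, put $\delta=N^*\cap\omega_1$ on top, and observe that the only nontrivial point is the measuring clause at $\delta$.

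The dichotomy you use to handle that clause, however, does not work. Your Case 1 is vacuous: for \emph{any} $(x,C)\in N^*\cap\bbQ_{\vec C}$ and any $\alpha<\delta$, pick $t\in C\cap C_\delta$ with $t>\max(\alpha,\max(x))$ (possible since $C\cap\delta$ and $C_\delta$ are both clubs of $\delta$); then $(x\cup\{t\},C\setminus(t+1))$ is an $N^*$-extension of $(x,C)$ adjoining a point of $C_\delta$ above $\alpha$. So you are always in Case 2, and there the key step --- ``extend $(x_n,C_n)$ within $N^*$ into $D_n$ while adjoining only points of $C_\delta$'' --- is both unjustified and, in general, false. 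The cofinality of $C_n\cap C_\delta$ in $\delta$ only lets you adjoin \emph{single} points of $C_\delta$; it gives no control over the further points needed to enter $D_n$. Worse, if $D_n$ forces an infinite block of new points (e.g.\ $D_n=\{r\,:\,x_r\text{ has a limit point above }\gamma\}$, which is dense and lies in $N^*$ for $\gamma\in N^*\cap\omega_1$), that block is an element of $N^*$ and hence bounded below $\delta$; if $\otp(C_\delta)=\omega$ every proper initial segment of $C_\delta$ is finite, so no infinite block in $N^*$ can be contained in $C_\delta$. Thus the ``all new points into $C_\delta$'' strategy is impossible for such $C_\delta$, and the correct proof must instead show that one can always enter each dense set from within $N^*$ while keeping the new points \emph{out of} $C_\delta$ above a fixed level --- which is the genuinely delicate content of the standard properness argument and is not supplied by anything in your proposal.
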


\begin{proof}
The fact that $\bbQ_{\vec C}$ is a  P\v{r}\'{i}kr\'{y}-type forcing notion with stems in $\cH(\aleph_1)$ is clear from the definition, and it is obvious that $\bbQ_{\vec C}\sub\cH(\omega_2)$. The properness of $\bbQ_{\vec C}$ is a standard fact (s.\ e.g.\ \cite{EMM}). We will now show, by a variation of the proof in \cite{EMM}, that $\bbQ_{\vec C}$ is actually symmetrically proper.

For this, suppose $\mtcl N$ is a finite symmetric system of countable elementary submodels of  $(\cH(\omega_2); \in, \vec C)$ and let $(x, C)$ be a condition in $\bbQ_{\vec C}\cap N$ for some $N\in\mtcl N$ of minimal height within $\mtcl N$. We will find a condition $(x^*, C^*)\in\bbQ_{\vec C}$ which is $\mtcl N$-symmetric and $(N',\bbQ_{\vec C})$-generic for all $N'\in \mtcl N$. For this, let $(N_k)_{k\leq n}$, for some $n<\omega$, be a $\subseteq$-maximal $\in$-chain of members of $\mtcl N$. Let also $N_{n+1}=\bold V$. We build a $\bbQ_{\vec C}$-decreasing sequence $(x_k, C_k)$ of $\bbQ_{\vec C}$, for $k\leq n+1$, letting $(x_0, C_0)=(x, C)$ and making sure that $(x_{k+1}, C_{k+1})\in N_{k+1}$ is $(N_k, \bbQ_{\vec C})$-generic.

In order to find $(x_{k+1}, C_{k+1})$ we build, working in $N_{k+1}$, a $(N_k, \bbQ_{\vec C})$-generic sequence $((x^i_k, C^i_k)\,:\,i<\omega)$ of conditions extending $(x_k, C_k)$ and belonging to $N_k$; i.e., $(x^{i+1}_k, C^{i+1}_k)\in N_k$ extends $(c^i_k, C^i_k)$ for all $i$ and for every maximal antichain $A\in N_k$ of $\bbQ_{\vec C}$ there is some $i$ with $(x^i_k, C^i_k)$ extending some condition in $A$. Further, if there is a club $C\in N_k$ of $\omega_1$ such that $C\cap\delta_{N_k}\sub C_{\delta_{N_k}}$, then we pick $C^0_k$ so that $C\sub C^0_k$ for some such $C$; and if, on the other hand, $C\setminus C_{\delta_{N_k}}\neq\emptyset$ for every club $C\in N_k$ of $\omega_1$, then we make sure that $(\bigcup_{i<\omega}x^i_k\setminus \max(x_k))\cap C_{\delta_{N_k}}=\emptyset$ (this is a standard $\MRP$-type construction; s.\ \cite{EMM}). In either case we let $$(x_{k+1}, C_{k+1})=(\bigcup_{i<\omega}x^i_k\cup\{\delta_{N_k}\}, \bigcap (\Club_{\omega_1}\cap \bigcup (\mtcl N\cap N_{k+1}))),$$ where $\Club_{\omega_1}$ denotes the club filter on $\omega_1$, and note that $(x_{k+1}, C_{k+1})\leq_{\bbQ_{\vec C}}(x^i_k, C^i_k)$ for each $i$.

Let $(x^*, C^*)=(x_{n+1}, \bigcap (\Club_{\omega_1}\cap\,\mtcl N))$, 
%$(x^*, C^*)=(x_{n+1}, C_{n+1})=(x_{n+1}, \bigcap (\Club_{\omega_1}\cap\,\mtcl N))$, 
and let us note that $(x^*,C^*)\leq_{\bbQ_{\vec C}}(x_k, C_k)$ for each $k$, and that in fact $(x^*, C^*)\leq_{\bbQ_{\vec C}}(x^i_k, \Psi_{N_k, N}(C^i_k))$ for all $k$, $i$ and all $N\in\mtcl N$ with $\delta_N=\delta_{N_k}$.  Given any $N\in\mtcl N$, we check that $(x^*, C^*)$ is $(N, \bbQ_{\vec C})$-generic. For this, let $(x', C')\leq_{\bbQ_{\vec C}}(x^*, C^*)$ and $A\in N$ be a maximal antichain of $\bbQ_{\vec C}$ and let us check that $(x', C')$ is compatible with a condition in $E\cap N$. Let $k$ be such that $\delta_N=\delta_{N_k}$ and let $B=\Psi_{N, N_k}(A)$. Then there is some $i$ such that $(x^i_k, C^i_k)$ extends some condition $s\in B\cap N_k$. But then $(x', C')$ extends $(x^i_k, \Psi_{N_k, N}(C^i_k))$, which in turn extends $\Psi_{N_k, N}(s)\in A\cap N$.

Finally, we show that $(x^*, C^*)$ is $\mtcl N$-symmetric. For this, let again $(x', C')\in\bbQ_{\vec C}$ be an extension of $(x^*, C^*)$, let $N^0$, $N^1\in\mtcl N$ be such that $\delta_{N^0}=\delta_{N^1}$, let $(x, C)\in N^0\cap\bbQ_{\vec C}$ be such that $(x', C')\leq_{\bbQ_{\vec C}}(x,C)$, and let us check that $(x', C')\leq_{\bbQ_{\vec C}}\Psi_{N^0, N^1}((x, C))=(x, \Psi_{N^0, N^1}(C))$.  Since of course $C'\supseteq\Psi_{N^0, N^1}(C)$, in order to see that  $(x', C')\leq_{\bbQ_{\vec C}}(x, \Psi_{N^0, N^1}(C))$ it suffices to show that $x'\setminus x\sub\Psi_{N^0, N^1}(C)$. Let $k\leq n$ be such that $\delta_{N^0}=\delta_{N^1}=\delta_{N_k}$. Since $(x'\cap\delta_{N_k})\setminus x\sub C$ and $\Psi_{N^0, N^1}(C)\cap\delta_{N_k}=C\cap\delta_{N_k}$, we have that $(x'\cap\delta_{N_k})\setminus x\sub\Psi_{N^0, N^1}(C)$. Also, for every $j$ such that $k<j\leq n$, if $N\in\mtcl N$ is such that $\delta_N=\delta_{N_j}$ and $N^1\in N$, then $(x'\cap \delta_{N_j})\setminus\delta_{N_k}\sub \Psi_{N, N_j}(\Psi_{N^0, N^1}(C))$ by the construction of $(x^*, C^*)$, but of course $\Psi_{N, N_j}(\Psi_{N^0, N^1}(C))\cap\delta_{N_j}=\Psi_{N^0, N^1}(C)\cap\delta_{N_j}$.  And finally, $x'\setminus \delta_{N_n}\sub \Psi_{N^0, N^1}(C)$ again by the construction of $(x^*, C^*)$.
\end{proof}

The Thinning-Out Principle ($\TOP$) is the following statement, defined by Baumgartner in \cite{baumgartner}: Suppose $A\subseteq\omega_1$, $B\subseteq\omega_1$, and $(B_\alpha\,:\,\alpha\in B)$ is such that $B_\alpha\subseteq \alpha$ for each $\alpha$. Suppose for every uncountable $X\subseteq A$ there is some $\beta<\omega_1$ such that $$\{X\}\cup\{B_\alpha\,:\,\alpha\in B\setminus\beta\}$$ has the finite intersection property. Then there is an uncountable $X\subseteq A$ such that $(X\cap \alpha)\setminus B_\alpha$ is finite for every $\alpha\in B$.

The conjunction $MA_{\aleph_1}+\TOP$ has several interesting consequences; for example, it implies the non-existence of S-spaces, the partition relation $\omega_1\rightarrow (\omega_1, \alpha)^2$ for each $\alpha<\omega_1$, and that if $(D, \leq_D)$ is a directed set of size $\aleph_1$ and every uncountable subset of $D$ contains a countable unbounded set, then there is an uncountable subset $X$ of $D$ such that every infinite subset of $X$ is unbounded (s.\ \cite{baumgartner}).

\begin{lemma}\label{baumgartner1}
Suppose $A$ and $\vec B=(B_\alpha\,:\,\alpha \in B)$ are as in the statement of $\TOP$. Then there is a forcing notion $\bbQ_{A, \vec B}$ adding $X$  as in the conclusion of $\TOP$ for $A$ and $\vec B$ and such that $\bbQ_{A, \vec B}$ is of the form $\bbQ_0\ast\name\bbQ_1\ast\name\bbQ_2$, where
\begin{enumerate}
\item $\bbQ_0$ is $\Add(\omega, \omega_1)$, i.e., the standard forcing for adding $\aleph_1$-many Cohen reals,
\item $\name\bbQ_1$ is an $\Add(\omega, \omega_1)$-name for the standard forcing for adding a club diagonalizing the club filter on $\omega_1$ in $\bold V[\name{G}_{\bbQ_0}]$, and
\item $\name\bbQ_2$ is a $\bbQ_0\ast\name\bbQ_1$-name for a c.c.c.\ forcing of size $\aleph_1$.
\end{enumerate}

In particular, if $\CH$ holds, then $\bbQ_{A, \vec B}$  is an $\omega_1$-linked symmetrically proper forcing notion included in $\cH(\omega_2)$.
\end{lemma}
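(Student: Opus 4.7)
The plan is to construct $\bbQ_{A,\vec B}$ as the three-stage iteration described in the statement, essentially by decomposing Baumgartner's original consistency proof for $\TOP$ from \cite{baumgartner} into its natural factors. The Cohen stage $\bbQ_0=\Add(\omega,\omega_1)$ supplies an $\omega_1$-sequence $\langle c_\xi : \xi<\omega_1\rangle$ of Cohen reals that will later serve as a generic amalgamation device. In $\bold V[\dot G_{\bbQ_0}]$, the second factor $\name\bbQ_1$ is the canonical forcing whose conditions are countable closed bounded subsets of $\omega_1$ ordered by end-extension, with the extra requirement that conditions reach into every club of the intermediate model (so that the generic $D\subseteq\omega_1$ diagonalizes the club filter of $\bold V[\dot G_{\bbQ_0}]$). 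The points of $D$ will serve as markers structuring the third stage. This factor is $\omega$-distributive and preserves $\omega_1$.

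In $\bold V[\dot G_{\bbQ_0}\ast\dot G_{\name\bbQ_1}]$, I would take $\name\bbQ_2$ to be a Baumgartner-style finite-condition thinning poset whose typical condition is a pair $(x,F)$, with $x\in [A]^{<\omega}$ and $F$ a finite record of ``promises'' $F(\delta)<\omega_1$ attached to markers $\delta\in D$, collectively witnessing (via the FIP hypothesis on $A$ and $\vec B$) that $\{x\}\cup\{B_\alpha : \alpha\in B\setminus F(\delta)\}$ has the finite intersection property. The ordering stipulates that any point later adjoined to $x$ past a marker $\delta$ must lie in finitely many $B_\alpha$ specified by $F$, so that the generic $X=\bigcup_p x_p$ inherits the property that $(X\cap\alpha)\setminus B_\alpha$ is finite for every $\alpha\in B$. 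The bound $|\name\bbQ_2|=\aleph_1$ is then immediate. For the dense P\v{r}\'{\i}kr\'{y}-type subset with stems in $\cH(\aleph_1)$, I would identify a condition $p=(p_0,\name p_1,\name p_2)$ with a pair $(s_p,R_p)$, where $s_p$ packages the finite Cohen condition $p_0$, a hereditarily countable representation of $\name p_1$ (a countable closed bounded subset of $\omega_1$), and the finite working data $(x,F)$ of $\name p_2$, while $R_p$ is a reservoir consisting of a $\bbQ_0$-name for a club of $\omega_1$ witnessing the diagonalization commitments of $\name p_1$. Reservoirs are closed under pairwise intersection because the intersection of two names for clubs is again a name for a club, so conditions (2) and (3) of the P\v{r}\'{\i}kr\'{y}-type definition hold; shrinking the reservoir leaves the stem untouched, which is condition (3).

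The main obstacle is verifying the c.c.c.\ of $\name\bbQ_2$. Given an uncountable family $\{(x_\xi,F_\xi):\xi<\omega_1\}$, a $\Delta$-system reduction yields a root and a uniform structure on the $F_\xi$; then, for any two members of the family above that root, one must amalgamate the promises $F_{\xi_0}$ and $F_{\xi_1}$ into a single promise compatible with $x_{\xi_0}\cup x_{\xi_1}$. This is precisely where the FIP hypothesis on $A$ and $\vec B$ is invoked, with the Cohen reals and the markers of $D$ coordinating the amalgamation uniformly across the family --- which is Baumgartner's original combinatorial argument, now carried out inside the intermediate model $\bold V[\dot G_{\bbQ_0}\ast\dot G_{\name\bbQ_1}]$. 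Once c.c.c.\ is in hand, properness of the whole iteration follows from the c.c.c.\ of $\bbQ_0$, the $\omega$-distributivity of $\name\bbQ_1$, and the c.c.c.\ of $\name\bbQ_2$, and the P\v{r}\'{\i}kr\'{y}-type presentation above is automatically proper since each factor is.
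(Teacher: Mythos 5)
Your proposal is correct and follows essentially the same route as the paper: the paper simply cites Baumgartner for the existence and three-step factorisation of $\bbQ_{A,\vec B}$ (where you instead sketch a reconstruction of his c.c.c.\ poset $\name\bbQ_2$, deferring the key amalgamation to his original argument), and its verification of the dense P\v{r}\'{i}kr\'{y}-type suborder is exactly your packaging --- the finite Cohen condition, a hereditarily countable name for the closed countable set, and the $\name\bbQ_2$-datum go into the stem, with a name for a club as the reservoir. The one small slip is your claim that properness of $\name\bbQ_1$ follows from its $\omega$-distributivity (distributivity does not imply properness in general); the paper instead just invokes the standard fact that each of the three factors, in particular the club-diagonalizing forcing, is proper.
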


\begin{proof}
The first assertion of the lemma is proved in \cite{baumgartner}. As to the second assertion, we first observe that the standard forcing $\bbQ$ for adding a club diagonalizing the club filter on $\omega_1$ is a symmetrically proper P\v{r}\'{i}kr\'{y}-type forcing notions with stems in $\cH(\aleph_1)$---and therefore it is $\omega_1$-linked if $\CH$ holds. $\bbQ$ is the partial order of pairs $(x, C)$, where
\begin{itemize}
\item $x$ is a closed countable subset of $\omega_1$ and
\item $C$ is a club of $\omega_1$,
\end{itemize}
\noindent and where $(x_1, C_1)$ extends $(x_0, C_0)$ if
\begin{itemize}
\item $x_1$ is an end-extension of $x_0$,
\item $C_1\subseteq C_0$, and
\item $x_1\setminus x_0\subseteq C_0$.\end{itemize}

This is trivially a P\v{r}\'{i}kr\'{y}-type forcing notion with stems in $\cH(\aleph_1)$, and the fact that it is symmetrically proper is established by a (simpler) version of the proof of Lemma \ref{measuring}. But now, if $\CH$ holds, then we have that $\bbQ_{A, \vec B}$, being an iteration of three $\omega_1$-linked symmetrically proper forcings, has itself this property by Proposition \ref{two-step}.\footnote{One can prove, using the fact that every club of $\omega_1$ in the second generic extension contains a club of $\omega_1$ in $\bold V$, that  $\bbQ_{A, \vec B}$ is in fact a P\v{r}\'{i}kr\'{y}-type forcing notion with stems in $\cH(\aleph_1)$.} Finally, it is clear that two-step iterations of $2^{\aleph_1}$-sized forcings with the $\aleph_2$-c.c.\ are themselves $2^{\aleph_1}$-sized---where we are taking all our names for subsets of $\omega_1$ to be nice names---and hence $|\bbQ_{A, \vec B}|=2^{\aleph_1}$ if $\CH$ holds.
\end{proof}

The last combinatorial principle we will consider in this subsection concerns ideals $\mathcal I\subseteq [S]^{{\leq}\aleph_0}$ on some set $S$ consisting of countable sets and containing all finite subsets of $S$. Such an ideal $\mathcal I$ is said to be a P-ideal in case for every sequence $(X_n)_{n<\omega}$ of members of $\mathcal I$ there is some $Y\in \mathcal I$ such that $X_n\setminus Y$ is finite for each $n$.

Todor\v{c}evi\'{c}'s \emph{P-ideal Dichotomy} is the statement that for every set $S$ and every P-ideal $\mathcal I\subseteq [S]^{{\leq}\aleph_0}$ on $S$, either
\begin{enumerate}
\item there is an uncountable $X\subseteq S$ such that $[X]^{\aleph_0}\subseteq \mathcal I$, or
\item $S=\bigcup_{n\in\omega}X_n$ for some sequence $(X_n)_{n\in\omega}$ such that $X_n\cap I$ is finite for every $n<\omega$ and every $I\in\mathcal I$.
\end{enumerate}

Given an ideal $\mathcal I$, $\mathcal J\subseteq\mathcal I$ is a \emph{generating set of $\mathcal I$} if $\mathcal I=\{X\,:\,X\subseteq Y\mbox{ for some }Y\in\mathcal J\}$. Also, we say that \emph{$\mathcal J$ generates $\mathcal I$}. An ideal $\mathcal I$ on $\omega_1$ is said to be $\aleph_1$-generated if there is $\mathcal J$, a generating set of $\mathcal I$, such that $|\mathcal J|=\aleph_1$.

Given sets $X$, $Y$, $X\sub^* Y$ means that $X\setminus Y$ is finite. An \emph{$\omega_1$-tower} is a sequence $\langle X_\alpha\,:\, \alpha<\omega\rangle$ of countable subsets of $\omega_1$ such that $X_\alpha\sub^\ast X_\beta$ for all $\alpha<\beta$.
It is clear that every $\aleph_1$-generated $P$-ideal is in fact generated by an $\omega_1$-tower.

We have the following.

\begin{lemma}\label{todor-abr}
Suppose $\CH$ holds, $\mtcl I\sub [\omega_1]^{{\leq}\aleph_0}$ is a $P$-ideal, and $\vec X=\langle X_\alpha\,:\,\alpha<\omega_1\rangle$ is an $\omega_1$-tower generating $\mtcl I$. Suppose $\omega_1$ cannot be decomposed into countably many sets $X$ such that $X\cap I$ is finite for each $I\in \mtcl I$. Let $\bbQ_{\vec X}$ be the poset consisting of pairs $p=(x_p, A_p)$ such that
\begin{itemize}
\item $x_p\in [\omega_1]^{{\leq}\aleph_0}$ and
\item $A_p=[\omega_1]^{\aleph_1}\setminus B_p$ for some $B_p\in [[\omega_1]^{\aleph_1}]^{{\leq}\aleph_0}$,
\end{itemize}

\noindent where $(x_q, A_q)\leq_{\bbQ_{\vec X}}(x_p, A_p)$ if and only if
\begin{itemize}
\item $x_q$ is an end-extension of $x_p$ (i.e., $x_p\sub x_q$ and $x_q\cap\sup\{\alpha+1\,:\,\alpha\in x_p\}=x_p$),
\item $A_q\sub A_p$, and
\item for every $X\in B_p$, $\{\xi\in X\,:\, x_q\setminus x_p\sub X_\xi\}$ is uncountable and belongs to $B_q$.
\end{itemize}

Then

\begin{enumerate}
\item $\bbQ_{\vec X}\sub\cH(\omega_2)$,
\item $\bbQ_{\vec X}$ forces the existence of some $X\in [\omega_1]^{\aleph_1}$ such that $[X]^{\aleph_0}\sub \mtcl I$,
\item $\bbQ_{\vec X}$ is symmetrically proper, and
\item $\bbQ_{\vec X}$ is a P\v{r}\'{i}kr\'{y}-type forcing with stems in $\cH(\aleph_1)$.
\end{enumerate}
\end{lemma}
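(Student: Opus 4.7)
The plan is to verify the three assertions of the lemma in increasing order of difficulty, leaving properness for last.

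Clause (3) is essentially by inspection. Each $p=(x_p,A_p)$ is already in P\v{r}\'{i}kr\'{y}-type form with stem $x_p\in[\omega_1]^{\leq\aleph_0}\sub\cH(\aleph_1)$ and reservoir $A_p$. The family of reservoirs is closed under intersection, since if $A_{p_i}=[\omega_1]^{\aleph_1}\setminus B_{p_i}$ for countable $B_{p_i}$, then $A_{p_0}\cap A_{p_1}$ has countable complement $B_{p_0}\cup B_{p_1}$. Moreover, if $(x_p,A_p)\in\bbQ_{\vec X}$ and $A'\in\Res(\bbQ_{\vec X})$ with $A'\sub A_p$, then $(x_p,A')$ is a refinement of $(x_p,A_p)$, since both the end-extension clause and the side condition are trivially satisfied when $x_q\setminus x_p=\emptyset$.

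For clause (1), I would first prove density of the sets $D_\alpha=\{p\,:\,\sup(x_p)\geq\alpha\}$ for each $\alpha<\omega_1$: given $p$ and $\alpha$, choose $\xi>\alpha$, set $x_q=x_p\cup\{\xi\}$, and define $B_q$ by throwing into $B_p$ the (at most countably many) sets $\{\zeta\in X\,:\,\xi\in X_\zeta\}$, $X\in B_p$, that happen to be uncountable. This yields a legal extension, so the generic union $x_G=\bigcup_{p\in G}x_p$ is uncountable. To show $[x_G]^{\aleph_0}\sub\mtcl I$, I would run a second genericity argument against the sets
\[
E_y=\{p\,:\,y\not\sub x_p\text{ or } y\sub^* X_\beta\text{ for some }\beta\},
\]
one for each countable $y\sub\omega_1$; density of $E_y$ uses that $\vec X$ is an $\omega_1$-tower generating a $P$-ideal, together with the side condition structure, which forces an arbitrary tail of $y\sub x_G$ to fall into the tower.

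Clause (2), properness, is the technical heart. Fix a large $\theta$, a countable $N\prec\cH(\theta)$ with $\vec X,\mtcl I,\bbQ_{\vec X},p\in N$, and set $\delta=\delta_N$. I would construct an extension $p^*=(x_p,A^*)\leq_{\bbQ_{\vec X}} p$ whose reservoir complement $B^*$ is a countable family of uncountable subsets of $\omega_1$ designed to encode all dense subsets of $\bbQ_{\vec X}$ in $N$: one enumerates pairs $(D_n,r_n)$ with $D_n\in N$ dense and $r_n\in D_n\cap N$, and accumulates into $B^*$ the sets appearing in $B_{r_n}$ together with appropriate ``shadow'' sets obtained by intersecting with $\omega_1\setminus\delta$. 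Genericity of $p^*$ is then verified by a standard amalgamation: given $D\in N$ dense and $q\leq_{\bbQ_{\vec X}} p^*$, one reflects a sufficiently absolute piece of $q$ into $N$ to produce $r\in D\cap N$ compatible with $q$. The non-decomposition hypothesis on $\mtcl I$ enters here, ensuring that the pool of candidate ordinals $\xi$ realizing the reflection remains plentiful enough to reconcile the side conditions recorded by $B_q$ and $B_r$. The principal obstacle will be the precise recipe for $B^*$ and the verification of the amalgamation step; this is carried out exactly as in Todor\v{c}evi\'{c}'s proof that $\PFA$ (via this class of forcings) implies PID, the only adaptation being to check that $\CH$ suffices when $\mtcl I$ is $\aleph_1$-generated.
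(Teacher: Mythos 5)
The paper's own proof of this lemma is a two-line citation: clauses (1) and (2) are quoted from Abraham--Todor\v{c}evi\'{c} \cite{Abr-To} (in the complementary presentation $(x_p,B_p)$ of conditions), and (3) is declared immediate from the presentation. Your treatment of (3) coincides with the paper's, and outsourcing the amalgamation step of properness to \cite{Abr-To} is consistent with what the paper does. However, the portions you do spell out contain concrete problems. You are silently reading the third bullet of the extension relation as ``$\{\xi\in X\,:\,x_q\setminus x_p\sub X_\xi\}$ is \emph{uncountable} and belongs to $B_q$'' rather than the literal ``countable'': under the literal reading, taking $x_q=x_p$ makes this set all of $X$, so $\leq_{\bbQ_{\vec X}}$ is not even reflexive and your claim that $(x_p,A')\leq_{\bbQ_{\vec X}}(x_p,A_p)$ whenever $A'\sub A_p$ --- the heart of clause (3) --- is false. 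The corrected reading is surely the intended one, but you must say so, because your density argument for $D_\alpha$ is incompatible with it: you pick an \emph{arbitrary} $\xi>\alpha$ and then add to $B_q$ only those trace sets $\{\zeta\in X\,:\,\xi\in X_\zeta\}$ ``that happen to be uncountable''. If for some $X\in B_p$ that trace set is countable, the extension is simply illegal and cannot be repaired by omitting the offending set from $B_q$. The correct argument must \emph{choose} $\xi$ so that every trace set is uncountable; this is possible because for each $X\in B_p$ the set of bad $\xi$ (those lying in $X_\zeta$ for only countably many $\zeta\in X$) is orthogonal to $\mtcl I$, and $\omega_1\setminus(\alpha+1)$ is not a countable union of orthogonal sets. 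This is exactly where the non-decomposition hypothesis enters clause (1); your sketch invokes it only inside the properness argument.

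Two further points. Your $E_y$-density claim fails as stated unless stems are required to be (locally) in $\mtcl I$: if some condition has an infinite $\mtcl I$-orthogonal set inside its stem, no extension lands in $E_y$. (This clause is present in the Abraham--Todor\v{c}evi\'{c} forcing and appears to have been dropped in the paper's rendering; your proof of (1) needs it and should flag it.) Finally, your clause (2) is a description of the shape of a properness proof rather than a proof --- the ``recipe for $B^*$'' and the amalgamation step, which are the entire content, are deferred to Todor\v{c}evi\'{c}. Since the paper itself defers (1) and (2) wholesale to \cite{Abr-To}, that deferral is acceptable in spirit, but the partial arguments you do supply need the repairs above before they are correct.
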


\begin{proof}
(1) is obvious.
(2) and the properness of $\bbQ_{\vec X}$ are proved in \cite{Abr-To}---albeit with the (complementary) presentation of the forcing given by $(x_p, B_p)$ rather than $(x_p, A_p)$. (3) is immediate by the presentation of $\bbQ_{\vec X}$. Finally, the symmetric properness of $\bbQ_{\vec X}$ follows by a construction very similar to the one in the proof of Lemma \ref{measuring}.
\end{proof}

The following corollary is now a consequence from Lemmas \ref{baumgartner}, \ref{velickovic}, \ref{abraham}, \ref{measuring}, \ref{baumgartner1}, and \ref{todor-abr}.

\begin{corollary}\label{final-cor}
The following statements follow from $\CH$--$\omega_1$\text{-linked-Symm-}$\BPFA(\omega_2)$.

\begin{enumerate}
\item $\BA$,
\item $\OCA(\aleph_1)$,
\item $\OCA_{[\text{ARS}]}$,
\item Measuring,
\item $\TOP$,
\item The P-ideal Dichotomy for $\aleph_1$-generated ideals on $\omega_1$.
\end{enumerate}
\end{corollary}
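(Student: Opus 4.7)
My strategy is uniform across all five items: express each principle as an instance of $\CH$-$\PrBPFA$, phrasing it as $\cH(\aleph_2)\models\forall x\exists y\,\varphi(x,y)$ for a suitable restricted $\varphi$, and then, for every $a$ in any ground model $M\models\CH$ with $a\in M$, produce inside $M$ a proper P\v{r}\'{i}kr\'{y}-type forcing $\bbQ$ with stems in $\omega_1$ such that $\bbQ$ forces $\cH(\aleph_2)\models\exists y\,\varphi(a,y)$. A preliminary observation used throughout is that, under $\CH$, any proper forcing notion of cardinality at most $\aleph_1$ may be rendered P\v{r}\'{i}kr\'{y}-type with stems in $\omega_1$ (assign each condition a trivial reservoir and transfer its underlying set to $\omega_1$ via a bijection); more generally, any proper P\v{r}\'{i}kr\'{y}-type forcing with stems in $\cH(\aleph_1)$ may be recoded, via a bijection $\cH(\aleph_1)\cong\omega_1$, to an isomorphic forcing with stems in $\omega_1$, preserving properness and the $\Res$-structure.

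For items (1), (3) and (4)---$\BA$, Measuring, $\TOP$---the forcing is supplied directly by the cited lemma. For (1), take $x$ to code a pair $(A,B)$ of $\aleph_1$-dense sets of reals and let $\varphi(x,y)$ say that $y$ is an order-isomorphism $A\to B$; Lemma \ref{baumgartner} in $M$ gives a c.c.c.\ forcing of size $\aleph_1$ adding such a $y$, to which the preliminary observation applies. For (3), take $x=\vec C$ a club-sequence on $\omega_1$ and let $\varphi(x,y)$ say that $y$ is a club of $\omega_1$ measuring $\vec C$; Lemma \ref{measuring} gives $\bbQ_{\vec C}$, which is a proper P\v{r}\'{i}kr\'{y}-type forcing with stems in $\cH(\aleph_1)$, handled by the preliminary observation. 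For (4), take $x$ to code a triple $(A,B,\langle B_\alpha\,:\,\alpha\in B\rangle)$ satisfying the hypothesis of $\TOP$ and let $\varphi(x,y)$ say that $y\sub A$ is uncountable and $(y\cap\alpha)\setminus B_\alpha$ is finite for every $\alpha\in B$; Lemma \ref{baumgartner1} provides the desired forcing, once again after recoding.

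For the dichotomy items (2) and (5), an extra case-split is performed in each $M\models\CH$. For (2), $\OCA(\aleph_1)$, let $\varphi(x,y)$ assert that $y$ codes either an uncountable $0$-homogeneous subset of $X$ or a countable cover of $X$ by $1$-homogeneous sets. In $M$ containing $a$: if the second alternative already holds in $M$, trivial forcing works (the witness, being countable, belongs to $M\sub\bold V$ and remains a $1$-homogeneous cover there); otherwise $K_0$ is not a union of fewer than $2^{\aleph_0}$-many $1$-homogeneous sets in $M$, and Lemma \ref{velickovic} in $M$ produces a c.c.c.\ forcing of size $\aleph_1$ adding an uncountable $0$-homogeneous set. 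For (5), let $\varphi(x,y)$ assert that $y$ codes either an uncountable $X\sub\omega_1$ with $[X]^{\aleph_0}\sub\mtcl I$ or a decomposition $\omega_1=\bigcup_{n<\omega}X_n$ with $X_n\cap I$ finite for every $n$ and every $I\in\mtcl I$. In $M$: if the second alternative holds there it persists to $\bold V$ (since finite intersection of $X_n$ with each of the $\aleph_1$-many generators---which are the same in $M$ and $\bold V$---implies finite intersection with every member of $\mtcl I$), so trivial forcing works; otherwise $\mtcl I$ is generated in $M$ by an $\omega_1$-tower $\vec X$, and Lemma \ref{todor-abr} produces the required proper P\v{r}\'{i}kr\'{y}-type forcing $\bbQ_{\vec X}$ with stems in $\cH(\aleph_1)$.

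The main technical points to verify will be: (i) that each $\varphi$ can be written with bounded quantifiers when interpreted in $\cH(\aleph_2)$, which is standard once the witnesses are packaged appropriately (all objects involved lie in $\cH(\aleph_2)$ and all relevant properties are expressible via bounded quantification over elements of the packaged data); and (ii) the upward-absoluteness of the ``trivial'' alternatives in (2) and (5) from $M$ to $\bold V$, which goes as indicated. Beyond these coding checks, all the genuine forcing-theoretic content has already been done in Lemmas \ref{baumgartner}, \ref{velickovic}, \ref{measuring}, \ref{baumgartner1} and \ref{todor-abr}, so the corollary follows immediately by applying $\CH$-$\PrBPFA$ five times.
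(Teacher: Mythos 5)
Your proposal is correct and takes essentially the same route as the paper, which derives the corollary directly from Lemmas \ref{baumgartner}, \ref{velickovic}, \ref{measuring}, \ref{baumgartner1} and \ref{todor-abr} in exactly the way you describe: each principle is cast as $\cH(\aleph_2)\models\forall x\exists y\,\varphi(x,y)$ for a suitably packaged restricted $\varphi$, the cited lemma supplies the required proper P\v{r}\'{i}kr\'{y}-type forcing in each ground model of $\CH$ (after the routine recoding of stems from $\cH(\aleph_1)$ to $\omega_1$, which is precisely the role of $\CH$ flagged in the paper's footnote), and the two dichotomies are handled by a case split inside the ground model with the trivial forcing covering the absolute alternative. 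The paper offers no further detail, so your fleshed-out version, including the upward-absoluteness checks, is the intended argument.
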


\section{$\MM(\omega_1)$ with large continuum}\label{section3}

Let $\MM(\omega_1)$ be the restriction of Martin's Maximum to posets of size $\aleph_1$, i.e., the forcing axiom $\FA(\{\bbQ\,:\,\bbQ\text{ preserves stationary subsets of $\omega_1$}\})_{\aleph_1}$. In \cite{foreman}, Foreman and Larson showed that the  restriction of Martin's Maximum to posets of size $\aleph_2$ implies that the continuum is $\aleph_2$.  In \cite{DKMMZ}, the authors force, over any model of $\ZFC$, so as to produce a model of $\MM(\omega_1)$ with $2^{\aleph_0}=\aleph_2$. In both \cite{DKMMZ} and \cite{foreman},  the authors  asked  whether $\MM(\omega_1)$ is compatible with $2^{\aleph_0}>\aleph_2$.

As we will next show, a small variant of the construction for Theorem \ref{thm2} produces a model which, in addition to the conclusions from that theorem, satisfies also $\MM(\omega_1)$. The theorem is the following.

\begin{theorem}\label{thm3}
Assume $\GCH$. Let $\kappa\geq\aleph_2$ be a regular cardinal. Then there is an $\aleph_2$-Knaster proper partial order $\bbP$ forcing the following statements.
\begin{enumerate}
\item $2^{\aleph_0}=2^{\aleph_1}=\kappa$
\item $\CH$--$\omega_1$\text{-linked-Symm-}$\BPFA(\omega_2)$
\item $\MM(\omega_1)_{{<}\kappa}$
\end{enumerate}
\end{theorem}

Here, $\MM(\omega_1)_{{<}\kappa}$ is of course  $\FA(\{\bbQ\,:\,\bbQ\text{ preserves stationary subsets of $\omega_1$}\})_\lambda$ for all $\lambda<\kappa$.

Most of this section is devoted to proving Theorem \ref{thm3}. We start out by presenting a notion introduced in \cite{DKMMZ}.

\begin{definition} (\cite{DKMMZ}) In an $\omega_1$-preserving forcing extension $\bold V[G]$, a \emph{continuous $\bold{V}$-reflection sequence} is a sequence $\vec M=\langle\overline{M}_\alpha\,:\,\alpha\in C\rangle$ such that:
\begin{enumerate}
\item $C$ is a club of $\omega_1$;
\item for each $\alpha\in C$, $\overline{M}_\alpha$ is the transitive collapse of some (not necessarily unique) elementary submodel $M_\alpha$ of $\cH(\omega_2)^{\bold V}$ such that $\alpha=\delta_{M_\alpha}$;
\item (\emph{continuity}) for every $\alpha\in C$ and every function $x:\alpha^{{<}\omega}\to\alpha$ in $\overline{M}_\alpha$ there is some $\gamma<\alpha$ such that $x\restriction\delta^{{<}\omega}\in\overline{M}_\delta$ for every $\delta\in C$ between $\gamma$ and $\alpha$;
\item (\emph{reflection}) for every stationary set $S\sub [\cH(\omega_2)^{\bold V}]^{\aleph_0}$ in $\bold V$, $$T^{\vec M}_S=\{\alpha\in C\,:\,\overline{M}_\alpha\text{ is the transitive collapse of some member of }S\}$$ is a stationary subset of $\omega_1$.
\end{enumerate}
\end{definition}

The following proposition is then proved in \cite{DKMMZ}.

\begin{proposition}\label{main-prop}
Let $\bold V[G]$ be an $\omega_1$-preserving forcing extension in which there is a continuous $\bold V$-reflection sequence $\vec M=\langle\overline{M}_\alpha\,:\,\alpha\in C\rangle$. In $\bold V$, let $\bbP$ be a forcing on $\omega_1$ and suppose forcing with $\bbP$ preserves this cardinal. Suppose $\nu_0\in\omega_1$, $S\in\bold V$ is a stationary subset of $[\cH(\omega_2)^{\bold V}]^{\aleph_0}$, and for every $N\in S$ there is no extension of $\nu_0$ in $\bbP$ which is $(N, \bbP)$-generic. Let $H$ be $\bbP$-generic over $\bold V[G]$ such that $\nu_0\in H$ and let $E$ be the set of $\alpha\in C$ such that $H\cap A\neq\emptyset$ for every maximal antichain $A$ of $\bbP$ such that $A\cap\alpha\in\overline{M}_\alpha$. Then $E$ is a club of $\omega_1$ and $E\cap T^{\vec M}_S=\emptyset$.
\end{proposition}

Further, in \cite{DKMMZ} it is proved that there is a forcing notion $\bbQ^*$ of cardinality $2^{\aleph_1}$ and with the $\aleph_2$-p.i.c.\ adding a continuous $\bold V$-reflection sequence. Conditions in $\bbQ^*$ are pairs $q=(a_q, b_q)$, where:

\begin{enumerate}
\item $a_q$ is a function whose domain is a closed countable subset of $\omega_1$;
\item for every $\alpha\in\dom(a_q)$, $\overline{M}^q_\alpha:=a_q(\alpha)$ is the transitive collapse of a countable elementary submodel of $\cH(\omega_2)$ such that $\delta_{\overline{M}_\alpha}=\alpha$;
\item for every $\alpha\in\dom(a_q)$ and every function $x:\alpha^{{<}\omega}\to \alpha$ in $\overline{M}^q_\alpha$ there is some $\gamma<\alpha$ such that $x\restriction\delta^{{<}\omega}\in\overline{M}^q_\delta$ for every $\delta\in\dom(a_q)$ between $\gamma$ and $\alpha$;
\item $b_q$ is a countable set of functions from $\omega_1^{{<}\omega}$ to $\omega_1$.
\end{enumerate}

Given conditions $q_0$ and $q_1$ in $\bbQ^*$, $q_1$ extends $q_0$ if and only if
\begin{itemize}
\item $\dom(a_{q_1})$ is an end-extension of $\dom(a_{q_0})$,
\item $a_{q_0}\sub a_{q_1}$ and $b_{q_0}\sub b_{q_1}$, and
\item for every $\alpha\in\dom(a_{q_1})\setminus\dom(a_{q_0})$ and every $x\in b_{q_0}$, $x\restriction\alpha^{{<}\omega}\in\overline{M}^{q_1}_\alpha$.
\end{itemize}

\begin{lemma}\label{CH2}
If $\CH$ holds, then $\bbQ^*$ is $\omega_1$-linked and symmetrically proper and it is included in $\cH(\omega_2)$.
\end{lemma}

\begin{proof}
Let $\mtcl F$ be the set of functions from $\omega_1^{{<}\omega}$ to $\omega_1$ and let  $\bbQ^*_0=\{(a, b)\,:\,(a, \mtcl F\setminus b)\in\bbQ^*\}$ ordered by setting $(a_1, b_1)\leq_{\bbQ^*_0}(a_0, b_0)$ iff $(a_1, \mtcl F\setminus b_1)\leq_{\bbQ^*}(a_0, \mtcl F\setminus b_0)$.  It is clear that $\bbQ^*_0$ is  a P\v{r}\'{i}kr\'{y}-type partial order with stems in $\cH(\omega_1)$ and that $\bbQ^*$ is isomorphic to $\bbQ^*_0$ as witnessed by the function sending $q\in\bbQ^*$ to $(a_q, \mtcl F\setminus b_q)$. Hence, $\bbQ^*_0$ is $\omega_1$-linked if $\CH$ holds. The symmetric properness of $\bbQ^*$ can be shown using essentially the same argument as for the symmetric properness of $\bbQ_{\vec C}$, for a given club-sequence $\vec C$, in the proof of Lemma \ref{measuring}.\footnote{This argument appears also in the proof of Corollary 3.10 from \cite{DKMMZ}.} Finally, we trivially have that $\bbQ^*\sub\cH(\omega_2)$.
\end{proof}

Our forcing $\bbP$ witnessing Theorem \ref{thm3} is $\bbP_\kappa$ for a construction $\langle\bbP_\alpha\,:\,\alpha\leq\kappa\rangle$, based on a certain sequence $\langle\name{\bbQ}_\alpha\,:\,\alpha<\kappa\rangle$ of names, exactly as the one for Theorem \ref{thm2} except for the fact that now we make sure that for every name $\name{\bbQ}\in\cH(\kappa)$ for a forcing notion on $\omega_1$ there are unboundedly many stages $\alpha$ in $\kappa$ at which $\name{\bbQ}_\alpha$ is forced, in $\bbP_\alpha\restriction\,\mtcl U^\alpha$, to be the following forcing.
\begin{itemize}
\item $\name{\bbQ}_\alpha=\name\bbQ$ if $\name\bbQ$ is $(\name{G}_{\bbP_\alpha\restriction\,\mtcl U^\alpha}, \bbP_\alpha)$-proper.
\item $\name{\bbQ}_\alpha=\name\bbQ^*$ if $\bbQ$ is not $(\name{G}_{\bbP_\alpha\restriction\,\mtcl U^\alpha}, \bbP_\alpha)$-proper.
\end{itemize}

We refer to the above situation by saying that our construction picks $\name{\bbQ}$ at stage $\alpha$.

It should be clear that all lemmas building up to the proof of Theorem \ref{thm2} are immune to our modification. Hence, all conclusions of Theorem \ref{thm2} hold for our present construction.

%The following lemma is immediate.

\begin{lemma}\label{lemma43} Let $\alpha<\kappa$ and suppose $\name{\bbQ}$ is a $\bbP_\alpha\restriction\,\mtcl U^\alpha$-name for a partial order on $\omega_1$. Then the following are equivalent.
\begin{enumerate}
\item $\name{\bbQ}$ is forced to be $(\name{G}_{\bbP_\alpha\restriction\,\mtcl U^\alpha}, \bbP_\alpha)$-proper.
\item $\bbP_\alpha\restriction\,\mtcl U^\alpha$ forces that there is a club $E$ of $[\cH(\kappa)^{\bold V}]^{\aleph_0}$, in $\bold V[\name{G}_{\bbP_\alpha\restriction\,\mtcl U^\alpha}]$, with the property that for all $N\in E$, if $N\in\bold V$ and there is some $q\in \bbP_\alpha$ such that $q\restriction\mtcl U^\alpha\in \name{G}_{\bbP_\alpha\restriction\,\mtcl U^\alpha}$ and $(N, \rho)\in\Delta_q$
for all $\rho\in N\cap\mtcl U^\alpha$, then for every $\nu\in \omega_1^{\bold V}\cap N[\name{G}_{\bbP_\alpha\restriction\,\mtcl U^\alpha}]$ there is some $(N[\name{G}_{\bbP_\alpha\restriction\,\mtcl U^\alpha}], \name{\bbQ})$-generic condition $\nu^*\in\omega_1^{\bold V}$ such that $\nu^*\leq_{\name{\bbQ}} \nu$.
\end{enumerate}
\end{lemma}

\begin{proof}
The implication from (1) to (2) is immediate since every function $F:[\cH(\kappa)^{\bold V}]^{{<}\omega}\to\cH(\kappa)^{\bold V}$ in $\bold V$ generates a club of $[\cH(\kappa)^{\bold V}]^{\aleph_0}$ in the $\bbP_\alpha\restriction\,\mtcl U^\alpha$-extension. For the other implication it is enough to notice that if $\name{F}$ is a $\bbP_\alpha\restriction\,\mtcl U^\alpha$-name for a function from $[\cH(\kappa)^{\bold V}]^{{<}\omega}$ to $\cH(\kappa)^{\bold V}$ generating a club of $[\cH(\kappa)]^{\bold V}$ in the extension as in (2), then any club in $\bold V$ of traces with $\cH(\kappa)$ or countable elementary submodels of any large enough $\cH(\theta)$ containing $\name{F}$ together with all other relevant objects witnesses that $\name{\bbQ}$ is forced to be $(\name{G}_{\bbP_\alpha\restriction\,\mtcl U^\alpha}, \bbP_\alpha)$-proper.
\end{proof}

The following lemma is the missing piece in the proof of Theorem \ref{thm3}.

\begin{lemma}\label{finallemmathm3}
$\bbP_\kappa$ forces $\MM(\omega_1)_{{<}\kappa}$.
\end{lemma}

\begin{proof}
Let $\name{\bbQ}$ be a $\bbP_\kappa$-name for a forcing notion on $\omega_1$ preserving stationary subsets of $\omega_1$ and let $\{\name{D}_i\,:\,i<\lambda\}$, for some $\lambda<\kappa$, be a set of names for dense subsets of $\name{\bbQ}$. By the $\aleph_2$-c.c.\ of $\bbP_\kappa$ we may of course assume that all these names are in $\cH(\kappa)$. It suffices to show that there is a stage $\alpha$ such that all these names are in fact $\bbP_\alpha$-names, our construction picks $\name\bbQ$ at $\alpha$, and such that $\name\bbQ$ is forced in $\name\bbP_\alpha\restriction\,\mtcl U^\alpha$ to be $(\name{G}_{\bbP_\alpha\restriction\,\mtcl U^\alpha}, \bbP_\alpha)$-proper -- as then $\name{\bbQ}_\alpha$ is forced to be $\name\bbQ$ and so the generic for $\bbP_\kappa$ induces a generic for $\name\bbQ$ meeting all $\name{D}_i$.

By our construction we know that there is indeed a high enough $\alpha$ at which we pick $\name\bbQ$, so we only need to show that $\name\bbQ$ is forced in $\name\bbP_\alpha\restriction\,\mtcl U^\alpha$ to be $(\name{G}_{\bbP_\alpha\restriction\,\mtcl U^\alpha}, \bbP_\alpha)$-proper. Let us assume, towards a contradiction, that this is not the case. We then have that there is a $\bbP_\kappa$-generic $G$ for which, letting $G_0=G\cap \bbP_\alpha\restriction\,\mtcl U^\alpha$, $\bbQ=\name\bbQ_{G_0}$ is not  $(G_0, \bbP_\alpha)$-proper in $\bold V[G_0]$.  Then $\bbQ_\alpha=(\name{\bbQ}_\alpha)_{G_0}$ is $\bbQ^*$ as calculated in the extension $\bold V[G_0]$. We may assume that forcing with $\bbQ$ over $\bold V[G_0]$ preserves $\omega_1$ as otherwise we get an immediate contradiction. Let $p^*\in G_0$ force all of the above in $\bbP_\alpha\restriction\,\mtcl U^\alpha$. By Lemma \ref{lemma43} there is some $\nu_0\in\omega_1$ and some stationary subset $S$ of $[\cH(\omega_2)^{\bold V}]^{\aleph_0}$ in $\bold V[G_0]$ with the following properties.
\begin{enumerate}
\item  For every $N\in S$ there is some $q\in \bbP_\alpha$ such that $q\restriction\mtcl U^\alpha\in G_0$ extends $p^*$ and such that $(N, \rho)\in\Delta_q$
for all $\rho\in N\cap \mtcl U^\alpha$.
\item For every $N\in S$ there is no extension of $\nu_0$ in $\bbQ$ which is $(N, \bbQ)$-generic.
\end{enumerate}

Given (2), we have by Proposition \ref{main-prop} that if $G(\alpha)$ is the generic for $(\name{\bbQ}_\alpha)_{G_0}$ induced by $G$, $\vec M=\langle M_\alpha\,:\,\alpha\in C\rangle$ is the corresponding continuous $\bold V[G_0]$-reflection sequence obtained from $G(\alpha)$, $H$ is a $\bbQ$-generic filter over $\bold V[G_0][G(\alpha)]$ such that $\nu_0\in H$, and $E$ is the set of $\alpha\in C$ such that $H\cap A\neq\emptyset$ for every maximal antichain $A$ of $\bbQ$ such that $A\cap\alpha\in\overline{M}_\alpha$, then $E$ is a club of $\omega_1$ and $E\cap T^{\vec M}_S=\emptyset$.

But using (1), a standard density argument employing the current version of Lemma \ref{properness} shows that $T^{\vec M}_S$ remains stationary in $\bold V[G]$. Indeed, given a $\bbP_\kappa$-name $\name C\in\cH(\kappa)$ for a club of $\omega_1$, we may find working in $\bold V[G_0]$ a sufficiently correct $N\in S$ such that $\name C\in N$.  Letting then $q\in\bbP_\kappa$ be a condition witnessing $N\in S$, by the current version of Lemma \ref{properness}, the definition of $\bbQ^\ast$, and the way we construct generic conditions for this forcing, it follows that $q$ forces over $\bold V[G_0]$ that $\delta_N\in\name C\cap T^{\vec M}_S$. Hence, if $H$ is in fact generic over $\bold V[G]$, then we get a contradiction since forcing with $\bbQ$ over $\bold V[G]$ supposedly preserved stationary subsets of $\omega_1$.
\end{proof}

Lemma \ref{finallemmathm3} concludes the proof of Lemma \ref{thm3}.

We will finish the paper with a question regarding a potential strengthening of our principle $\CH$--$\omega_1$\text{-linked-Symm-}$\BPFA(\omega_2)$.

We first fix some notation. Let us say that an inner model $M$ is \emph{$\NS_{\omega_1}$-correct} iff for every $S\in\mathcal P(\omega_1)^M$, if $S$ is stationary in $M$, then $S$ is stationary in $\bold V$.

Let \emph{Local$^{++}$ $\CH$} be the statement that every set in $H(\omega_2)$ is in some $\NS_{\omega_1}$-correct ground model satisfying $\CH$.\footnote{As observed by Lietz, Local$^{++}$ $\CH$ fails if $\NS_{\omega_1}$ is $\aleph_1$-dense. Indeed, if $\{S_i\,:\,i<\omega_1\}$ is dense in $\mtcl P(\omega_1)\setminus\NS_{\omega_1}$, then no $\NS_{\omega_1}$-correct inner model containing $\{S_i\,:\,i<\omega_1\}$ together with a sequence $(e_\alpha\,:\,\omega<\alpha<\omega_1)$ of surjections $e_\alpha:\omega\to\alpha$ can satisfy $\CH$ since such an inner model would think that $\NS_{\omega_1}$ is $\aleph_1$-dense and the $\aleph_1$-density of $\NS_{\omega_1}$ implies $\lnot\CH$.}

Let us also define $\CH$--$\omega_1$\text{-linked-Symm-}$\BPFA(\omega_2)^{++}$ to be the variant of the principle $\CH$--$\omega_1$\text{-linked-Symm-}$\BPFA(\omega_2)$ obtained as the conjunction of the following two statements.

\begin{enumerate}
\item  Local$^{++}$ $\CH$
\item  Let $a\in H(\omega_2)$ and let $\varphi(x, y)$ be a $\Sigma_0$ formula in the language of $(H(\omega_2); \in, \NS_{\omega_1})$. Suppose for every $\NS_{\omega_1}$-correct ground model $M$, if $a\in M$ and $M\models\CH$, then it holds in $M$ that there is an $\omega_1$-linked symmetrically proper forcing notion $\mtcl Q\sub H(\omega_2)^M$ such that $\mtcl Q$ forces $(H(\omega_2); \in, \NS_{\omega_1})\models \exists y\varphi(a, y)$. Then $$(H(\omega_2); \in, \NS_{\omega_1})\models\exists y\varphi(a, y)$$
\end{enumerate}

We have the following.

\begin{proposition}
$\CH$--$\omega_1$\text{-linked-Symm-}$\BPFA(\omega_2)^{++}$ implies $\MM(\omega_1)$.
\end{proposition}

\begin{proof}
Let as assume $\CH$--$\omega_1$\text{-linked-Symm-}$\BPFA(\omega_2)^{++}$, let $\mathbb P$ be a poset on $\omega_1$ preserving stationary subsets of $\omega_1$, and let $\mtcl D$ be an $\omega_1$-sequence of dense subsets of $\mathbb P$. Let $\psi$ be a $\Sigma_1$ sentence in the language of $(H(\omega_2); \in, \NS_{\omega_1})$ with $\bbP$ and $\mtcl D$ as parameters expressing that there is a filter of $\bbP$ meeting all members of $\mtcl D$ or there is a stationary subset of $\omega_1$ which is destroyed by $\bbP$. 

Let $M$ be an $\NS_{\omega_1}$-correct ground model satisfying $\CH$ and containing $\mathbb P$ and $\mtcl D$. By $\NS_{\omega_1}$-correctness of $M$ and the fact that forcing with $\bbP$ over $\bold V$ preserves stationary subsets of $\omega_1$, we have that the same is true in $M$. If $\bbP$ is proper in $M$, then $\bbP$ is, in $M$, isomorphic to a P\v{r}\'{i}kr\'{y}-type forcing with stems in $\omega_1$ and hence it is an $\omega_1$-linked symmetrically proper forcing. And of course every $\bbP$-generic filter over $M$ meets all members of $\mtcl D$.  If, on the other hand, $\mtcl D$ is not proper in $M$, then by the fact that forcing with $\bbP$ over $M$ preserves stationary subsets of $\omega_1$ together with Proposition \ref{main-prop}, the paragraph right after this proposition, and Lemma \ref{CH2}, it follows that $(\bbQ^*)^M$ is, in $M$, an $\omega_1$-linked and symmetrically proper included in $\cH(\omega_2)$ and forcing the existence of a stationary subset of $\omega_1$ which is destroyed after forcing with $\bbP$.

It thus follows that in $M$ there is an $\omega_1$-linked symmetrically proper poset forcing $\psi$. Hence, by an appropriate instance of clause (2) in the definition of our principle $\CH$--$\omega_1$\text{-linked-Symm-}$\BPFA(\omega_2)^{++}$ we have that $\bold{V}\models \psi$. By our assumption that $\bbP$ preserves stationary subsets of $\omega_1$ it then has to be the case, in $\bold V$, that there is a filter of $\bbP$ meeting all members of $\mtcl D$. 
\end{proof}

Our question is the following.

\begin{question}\label{q1}
Is $\CH$--$\omega_1$\text{-linked-Symm-}$\BPFA(\omega_2)^{++}$ consistent?
\end{question}

We do not know if our construction for Theorem \ref{thm2} is such that every relevant inner model $\bold V[G\cap\bbP_\alpha\restriction\,\mtcl U^\alpha][H_\alpha]$, where $G$ is $\bbP_\kappa$-generic and $H_\alpha$ is the generic filter for $(\name{\bbQ}_\alpha)_{G\cap\bbP_\alpha\restriction\,\mtcl U^\alpha}$ given by $G$, is $\NS_{\omega_1}$-correct in $\bold V[G]$ -- or can be modified so as to produce such a construction. A positive answer would of course yield a positive answer to Question \ref{q1}.

%\bibliographystyle{alphacolon}
%\bibliography{lista,listb,listx,listf,liste,listz}

\end{document}